\newtheorem{theorem}{Theorem}[section]
\newtheorem{proposition}[theorem]{Proposition}
\newtheorem{lemma}[theorem]{Lemma}
\newtheorem{corollary}[theorem]{Corollary}
\theoremstyle{definition}
\newtheorem{definition}[theorem]{Definition}
\newtheorem{exm}[theorem]{Example}
\theoremstyle{remark}
\newtheorem{remark}[theorem]{Remark}
\newcommand{\bydef}{\mathrel{\mathop:}=}
\newcommand{\vr}{^{\vee\odot}}
\renewcommand{\wr}{^{\wedge\oplus}}
\newcommand{\End}{\operatorname{End}}
\newcommand{\id}{\operatorname{id}}
\newcommand{\frab}{\operatorname{Free}_{\Gab}}
\newcommand{\fr}{\operatorname{Free}}
\renewcommand{\P}{\operatorname{P_F}}
\newcommand{\supp}{\operatorname{supp}}
\newcommand{\gr}{Grothendieck }
\newcommand{\ID}{\operatorname{\textsc{Id}}}
\newcommand{\Gab}{\cat{G}^\text{Ab}}
\newcommand{\sL}{{{}^s\!\mathcal{L}}}
\newcommand{\sR}{{{}^s\!\mathcal{R}}}
\newcommand{\B}{\operatorname{B}}
\newcommand{\M}{\mathcal{M}}
\newcommand{\MV}{\mathcal{MV}}
\newcommand{\Mod}{\textrm{-}{}^s\!\!\mathcal{M}\!\!\:\mathit{od}}
\newcommand{\Modd}{{}^s\!\!\mathcal{M}\!\!\:\mathit{od}\textrm{-}}
\newcommand{\cat}{\mathcal}
\newcommand{\R}{\mathbb{R}}
\newcommand{\N}{\mathbb{N}}
\renewcommand{\wp}{\mathscr{P}}
\renewcommand{\vec}{\mathbf}
\renewcommand{\phi}{\varphi}
\newcommand{\g}{\gamma}
\newcommand{\la}{\langle}
\newcommand{\ra}{\rangle}
\newcommand{\lto}{\longrightarrow}
\newcommand{\lmapsto}{\longmapsto}
\newcommand{\ov}{\overline}
\newcommand{\tensor}{\otimes}
\newcommand{\0}{\mathbf{0}}
\newcommand{\z}{{\declareslashed{}{\sim}{0}{-0.2}{0}\slashed{0}}}
\renewcommand{\u}{{\declareslashed{}{\sim}{0}{-0.2}{1}\slashed{1}}}
\def\amslatex\slash{{\protect\AmS-\protect\LaTeX}}
\begin{document} 

\title{Semiring and semimodule issues in MV-algebras}

\author{Antonio Di Nola \and Ciro Russo} 

\institution{Dipartimento di Matematica \\ Universit\`a di Salerno, Italy \\ \small{\texttt{$\{$adinola,cirusso$\}$@unisa.it}}}


\maketitle
\today

\begin{abstract}
In this paper we propose a semiring-theoretic approach to MV-algebras based on the connection between such algebras and idempotent semirings established in \cite{dng} and improved in \cite{bdn} --- such an approach naturally imposing the introduction and study of a suitable corresponding class of semimodules, called MV-semimodules.

Besides some basic yet fundamental results of more general interest for semiring theory we present several results addressed toward a semiring theory for MV-algebras. In particular we give a representation of MV-algebras as a subsemiring of the endomorphism semiring of a semilattice, show how to construct the Grothendieck group of a semiring and prove that this construction has a functorial nature. We also study the effect of Mundici categorical equivalence between MV-algebras and lattice-ordered Abelian groups with a distinguished strong order unit \cite{mun} upon the relationship between MV-semimodules and semimodules over idempotent semifields.
\end{abstract}

\keywords{MV-algebra \ Semiring \ Semimodule}

\noindent
{\it 2000 Mathematics Subject Classification:} 06D35, 16Y60

\vskip 10pt

\noindent The authors wish to thank the anonymous referee for the huge amount of suggestions that greatly improved the presentation of the paper.

\section{Introduction}

The main objective of this paper is to continue and deepen the study of MV-algebras as a special class of idempotent semirings proposed in \cite{dng} by applying classical ring-theoretic constructions and techniques whose adaptability to semiring theory is either known or established here.

MV-algebras arose in the literature as the algebraic semantics of \L ukasiewicz propositional logic, one of the longest-known many-valued logics. MV-algebras can be seen in one of their facets as a non-idempotent generalization of Boolean algebras and, among the various many-valued logics and corresponding algebraic semantics, \L ukasiewicz logic and MV-algebras are the ones that best succeed in both having a rich expressive power and preserving many properties of symmetry that are inborn qualities of classical propositional logic and Boolean algebras (for detailed discussions about these aspects of MV-algebras the reader may refer to \cite{dnr3,rus}).

In the last decades the knowledge about MV-algebras benefited from the literature on lattice-ordered groups via the well-known and celebrated categorical equivalence between MV-algebras and lattice-ordered Abelian groups with a distinguished strong order unit (Abelian $u\ell$-groups for short) \cite{mun}.

On the other hand, the theory of idempotent semirings is nowadays well-established (see for instance \cite{golan,kat1,kat4,kat6,kat7,kat8}) and boasts a wide range of applications in many fields, such as discrete mathematics, computer science, computer languages, linguistic problems, finite automata, optimization problems, discrete event systems, computational problems et cetera (see, for instance, \cite{bac,car,co0,gla,gon1,gon2,gun,kol,kol2,litmas}). The theory arising from the substitution of the fields of real and complex numbers with idempotent semirings and/or semifields is often referred to as \emph{idempotent} or \emph{tropical mathematics}.

As Litvinov observed in \cite{lit2}, ``idempotent mathematics can be treated as the result of a dequantization of the traditional mathematics over numerical fields as the Planck constant $\hbar$ tends to zero taking imaginary values.'' This point of view was also presented, by Litvinov himself and Maslov, in \cite{litmas}. Another equivalent presentation of idempotent mathematics is as an asymptotic version of the traditional mathematics over the fields of real and complex numbers. This idea is expressed in terms of an idempotent correspondence principle which is closely related to the well-known correspondence principle of N. Bohr in quantum theory \cite{bohr}. In fact, many important and useful constructions and results of the traditional mathematics over fields correspond to analogous constructions and results over idempotent semirings and semifields; to this extent the aforementioned paper \cite{lit2} provides an impressive list of references.

Another important aspect of the development of such a theory is the linear algebra and the algebraic geometry of idempotent semirings, better known as \emph{tropical geometry}, whose most important model is the geometry of the tropical semifield $\la \ov\R, \min, +, \infty, 0\ra$, where $\ov \R = \R \cup \{\infty\}$; the key objects of study are polyhedral cell complexes which behave like complex algebraic varieties. In this area, relevant works the reader may refer to are, among others, \cite{co1,co2,gun,rich}.

A connection between MV-algebras and a special category of additively idempotent semirings (called \emph{MV-semirings} or \emph{\L ukasiewicz semirings}) was first observed in \cite{dng} and eventually enforced in \cite{bdn}. On the one hand, every MV-algebra has two \emph{semiring reducts} isomorphic to each other by the involutive unary operation $^*$ of MV-algebras (see Section \ref{mvsemirings} for the definition of MV-algebra); on the other hand, the category of \emph{MV-semirings} defined in~\cite{bdn} is isomorphic to the one of MV-algebras. Such results led to interesting applications of MV-semirings and their semimodules to the theory of fuzzy weighted automata~\cite{sch}, and to an algebraic approach to fuzzy compression algorithms \cite{dnr,dnr2} and mathematical morphological operators \cite{rus2} for digital images.

Another link between MV-algebras and semiring theory relies on the aforementioned categorical equivalence due to Mundici. Indeed we shall see later on in the paper that the category of Abelian $u\ell$-groups is isomorphic to the one of idempotent semifields with a distinguished strong order unit (idempotent $u$-semifields for short); this fact has interesting consequences on the categories of semimodules over a given MV-algebra and the idempotent $u$-semifield corresponding to it via the composition of Mundici equivalence and such a categorical isomorphism.

In the present paper, after a preliminary section in which we recall basic notions and results about semirings and semimodules and a section devoted to projective objects in the categories of semimodules, we shall focus our attention on idempotent semirings and particularly on MV-semirings. Our main results can be briefly summarized as follows.
\begin{itemize}
\item We present a representation for homomorphisms of free semimodules (Theorem~\ref{homofree}) which, for finitely generated ones, is completely analogous to the matrix representation of homomorphisms between free ring modules. Such a representation is then suitably extended to all semimodule homomorphisms (Theorem~\ref{homo}).
\item A matrix-based characterization of finitely generated projective semimodules over any semiring is proved in Theorem~\ref{finproj}. Also in this case the result is a plain generalization of the corresponding one in ring module theory.
\item In Proposition~\ref{cyclicmv} we characterize cyclic projective MV-semimodules as direct summands of the free cyclic one. It is worth to underline that such a characterization does not hold in general for idempotent semirings. Moreover, it is not known so far whether it can be extended to non-cyclic MV-semimodules.
\item Corollary~\ref{mvrepr1} is a representation of any MV-algebra as a subsemiring of a semiring of endomorphisms. An interesting aspect of this result is the fact that the non-idempotent MV-algebraic operations $\oplus$ and $\odot$, which are commutative, are represented as composition of endomorphisms, an operation which is typically non-commutative.
\item In Section~\ref{gro}, we construct the \gr groups of semirings and MV-algebras following the classical ring-theoretic construction; such a construction is easily proven, also thanks to Theorem~\ref{finproj}, to be functorial (Theorems~\ref{k0thm} and \ref{k0thmsr}).
\item Last, in Section~\ref{functor} we discuss the relationship between MV-semimodules and semimodules over the positive cones of idempotent $u$-semifields as a consequence both of Mundici categorical equivalence and of constructions and results, of more general interest for idempotent semirings, that we present and/or recall in the same section.

In particular, we shall see that the category of semimodules over a given MV-algebra $A$ is basically a full subcategory of the one of semimodules over the positive cone of the idempotent $u$-semifield corresponding to $A$ via Mundici functor (Corollary \ref{mvemb}). Moreover, we shall see that any MV-semimodule is in a suitable sense a sort of interval of a semimodule over the positive cone of an idempotent $u$-semifield in analogy with the construction via Mundici functor of any MV-algebra as an interval of an Abelian $u\ell$-group with suitably defined operations. Consequently, MV-semimodules can be seen as ``truncated'' semimodules over positive cones of idempotent semifields, and global properties of the former as local ones of the latter.
\end{itemize} 

Many different categories are mentioned and used in this paper; although objects and morphisms of some of them shall be explicitly defined later we introduce the notations we use throughout the work in the following table.

\begin{center}
	\begin{tabular}{ccc}
Category & Objects & Morphisms \\ \hline
$\sR$ & Semirings with identity & Identity-preserving \\
&&semiring homomorphisms\\
&&\\
$S\Mod$ & Left $S$-semimodules & Left $S$-semimodule homomorphisms \\ 
&&\\
$\Modd S$ & Right $S$-semimodules & Right $S$-semimodule homomorphisms \\ 
&&\\
$\M$ & Monoids & \\
$\cat M^{\text{Ab}}$ & Abelian monoids & Monoid homomorphisms  \\
$\sL$ & Semilattices with identity & \\
&&\\
$\Gab$ & Abelian groups & Group homomorphisms \\
&&\\
 & Lattice-ordered Abelian & Lattice-ordered group \\
$\ell\Gab_u$ & groups with a distinguished& homomorphisms that \\
 & strong order unit  & preserve the strong unit \\
&&\\
$\MV$ & MV-algebras & MV-algebra homomorphisms\\		
	\end{tabular}
\end{center}

\section{Semirings and Semimodules}
\label{srsm}

In this section we recall some basic definitions and properties of semirings and semimodules over them. Most of this material can be found in~\cite{golan}. 

\begin{definition}\label{semiring}
A \emph{semiring} is an algebraic structure $\la S, +, \cdot, 0, 1 \ra$ such that
\begin{enumerate}[(S1)]
\item $\la S, +, 0\ra$ is a commutative monoid;
\item $\la S, \cdot, 1\ra$ is a monoid;
\item $\cdot$ distributes over $+$ from either side;
\item $0 \cdot a = 0 = a \cdot 0$ for all $a \in S$.
\end{enumerate}
A semiring $S$ is called
\begin{itemize}
\item \emph{commutative} if so is the multiplication,
\item \emph{idempotent} if so is the sum, i.~e. if it satisfies the equation $x + x = x$,
\item a \emph{semifield} if $\la S \setminus \{0\}, \cdot, 1 \ra$ is an Abelian group.
\end{itemize}
\end{definition}

Many relevant examples of semirings are known, among which we recall the (commutative) one of natural numbers $\la \N_0, +, \cdot, 0, 1\ra$ and the following ones, whose relevance will be clear in next sections.

\begin{exm}\label{end}
Let $\la M, +, 0\ra$ be a commutative monoid and $\End_\M(M)$ the set of its endomorphisms. Obviously $\la \End_\M(M), \circ, \id \ra$ is a monoid and $\la \End_\M(M), +, \0 \ra$, with the pointwise sum and the zero-constant map, is a commutative monoid. Now, if we consider three endomorphisms $f,g$ and $h$ of $M$, for all $x \in M$ we have:
\begin{enumerate}
\item[] $(h \circ (f+g))(x) = h(f(x) + g(x)) = h(f(x)) + h(g(x)) = ((h \circ f) + (h\circ g))(x)$, hence $h \circ (f+g) = (h \circ f) + (h \circ g)$;
\item[] $((f+g) \circ h)(x) = (f+g)(h(x)) = f(h(x)) + g(h(x)) = ((f \circ h) + (g \circ h))(x)$, that is $(f+g) \circ h = (f\circ h) + (g\circ h)$.
\end{enumerate}
Therefore $\la \End_\M(M), +, \circ, \0, \id \ra$ is a semiring, for any commutative monoid $M$. It can be shown analogously that also $\la \End_\M(M), +, \cdot, \0, \id \ra$, with $f\cdot g \bydef g \circ f$, is a semiring.
\end{exm}

\begin{exm}\label{l-group}
A partially ordered group $\la G, \cdot, {}^{-1}, 1, \leq\ra$ is a group endowed with an order relation which is compatible with the binary operation, i.~e., such that $a \leq b$ implies $ca \leq cb$ and $ac \leq bc$ for all $a,b,c \in G$. If the order relation defines a lattice structure, then the group is called a \emph{lattice-ordered group}, \emph{$\ell$-group} for short.

Let $\la G, \cdot, {}^{-1}, 1, \vee, \wedge\ra$ be an $\ell$-group and let us add a bottom element $\bot$ to $G$. If we set $x \cdot \bot = \bot = \bot \cdot x$ for all $x \in \ov G = G \cup \{\bot\}$, then the structure $\la \ov G, \vee, \cdot, {}^{-1}, \bot, 1 \ra$ is an idempotent division semiring. The same can be done by adding a top element and setting $\wedge$ instead of $\vee$ as the semiring sum; in this case we obtain the idempotent semifield $\la G \cup \{\top\}, \wedge, \cdot, {}^{-1}, \top, 1 \ra$. 

Conversely, let $\la F, \vee, \cdot, {}^{-1}, \bot, 1\ra$ be an idempotent division semiring. So $\la F \setminus \{\bot\}, \cdot, {}^{-1}, 1\ra$ is a group and the semilattice order defined by $\vee$ is compatible with $\cdot$. Moreover, it is immediate to verify that, for all $x, y \in F \setminus \{\bot\}$, $x \wedge y = -((-x) \vee(-y))$ and, therefore, $\la F \setminus \{\bot\}, \cdot, {}^{-1}, 1, \vee, \wedge\ra$ is a lattice-ordered group. 

We notice that the constructions above actually define a categorical isomorphism, and its inverse, between $\ell$-groups, with $\ell$-group homomorphisms, and idempotent division semirings with semiring homomorphisms.
\end{exm}

\begin{exm}\label{trop}
Let $\ov \R = \R \cup \{-\infty\}$. The structure $\la \ov \R, \max, +, -\infty, 0 \ra$ is an idempotent semifield, sometimes called the \emph{tropical semiring}.

According to the previous example, it is possible to define also the semifield $\la \R \cup \{\infty\}, \min, +, \infty, 0 \ra$. Indeed, in the literature of tropical geometry and idempotent semirings, both these structures are referred to as ``tropical semiring.'' Moreover, some authors call tropical semiring also the non-negative part of $\ov\R$. Actually, apart from notational convenience, there are no major differences among the theories which can be developed on all of these semirings.
\end{exm}

\begin{definition}\label{semimodule}
Let $S$ be a semiring. A (left) \emph{$S$-semimodule} is a commutative monoid $\la M, +, 0 \ra$ with an external operation with coefficients in $S$, called \emph{scalar multiplication}, $\cdot: (a,x) \in S \times M \lmapsto a \cdot x \in M$, such that the following conditions hold for all $a, b \in S$ and $x, y \in M$:
\begin{enumerate}[(SM1)]
\item $(a b) \cdot x = a \cdot (b \cdot x)$,
\item $a \cdot (x + y) = (a \cdot x) + (a \cdot y)$,
\item $(a + b) \cdot x = (a \cdot x) + (b \cdot x)$,
\item $0_S \cdot x = 0_M = a \cdot 0_M$,
\item $1 \cdot x = x$.
\end{enumerate}
\end{definition}

\begin{exm}\label{funcsemimodule}
Let $S$ be a semiring and $X$ be an arbitrary non-empty set. We can consider the monoid $\la S^X, +, \0 \ra$, where $\0$ is the $0_S$-constant function from $X$ to $S$ and
$$(f + g)(x) = f(x) + g(x) \quad \textrm{ for all } x \in X \textrm{ and } f, g \in S^X.$$
Then we can define a scalar multiplication in $S^X$ as follows:
$$\cdot: (a,f) \in S \times S^X \lmapsto a \cdot f \in S^X,$$
with the map $a \cdot f$ defined as $(a \cdot f)(x) = a f(x)$ for all $x \in X$.

It is clear that $S^X$ is a left $S$-semimodule. The semimodule $S^X$ can be defined also for $X = \varnothing$, in which case we obtain, up to an isomorphism, the one-element semimodule $\{0\}$.
\end{exm}

The definition and properties of right $S$-semimodules are completely analogous. If $S$ is commutative, the concepts of right and left $S$-semimodules coincide and we will say simply $S$-semimodules. If a monoid $M$ is both a left $S$-semimodule and a right $T$-semimodule~--- over two given semirings $S$ and $T$~--- we will say that $M$ is an \emph{$(S,T)$-bisemimodule} if the following associative law holds:
\begin{equation*}
(a \cdot_l x) \cdot_r a' = a \cdot_l (x \cdot_r a'), \quad \textrm{for all } \ x \in M, \ a \in S, \ a' \in T,
\end{equation*}
where $\cdot_l$ and $\cdot_r$ are~--- respectively~--- the left and right scalar multiplications.

\begin{definition}\label{smhomo}
Let $S$ be a semiring and $M, N$ be two left $S$-semimodules. A map $f: M \lto N$ is an $S$-semimodule homomorphism if $f(x + y) = f(x) + f(y)$ for any $x, y \in M$, and $f(a \cdot x) = a \cdot f(x)$, for all $a \in S$ and $x \in M$.
\end{definition}

Thus, given a semiring $S$, the categories $S\Mod$ and $\Modd S$ have, respectively, left and right $S$-semimodules as objects, and left and right $S$-semimodule homomorphisms as morphisms. If $S$ is commutative, $S\Mod$ and $\Modd S$ coincide, and we will simply use the left notation $S\Mod$.
\begin{remark}\label{notation}
Henceforth, in all the definitions and results that can be stated both for left and right semimodules, we will refer generically to ``semimodules''~--- without specifying left or right~--- and we will use the notations of left semimodules.
\end{remark}

\begin{exm}\label{n-monoid}
Any commutative monoid $M$ is naturally an $\N_0$-semimodule with the scalar multiplication defined as
$$nx = \underbrace{x + \ldots + x}_{n \textrm{ times}} \ \textrm{ and } \ 0x = 0, \quad \textrm{for all $n \in \N$ and $x \in M$}.$$
Moreover, it is immediate to verify that the categories of $\N_0$-semimodules and commutative monoids actually coincide for they have the same objects and morphisms.
\end{exm}

\begin{exm}\label{end-monoid}
With reference to Example~\ref{end}, any commutative monoid $M$ is trivially a right $\End_\M(M)$-semimodule with the action defined simply as the action of the endomorphisms on the elements of the monoid. But we can say more, as the next result shows.
\end{exm}

\begin{proposition}\label{semiend}
Let $M$ be a commutative monoid and $S$ be a semiring. Then $M$ can be endowed with an $S$-semimodule structure if and only if there exists a semiring homomorphism from $S$ to the semiring $\End_\M(M)$ of the monoid endomorphisms of $M$. 
\end{proposition}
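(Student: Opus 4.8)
The plan is to mimic the classical correspondence between module structures and ring homomorphisms into an endomorphism ring, using Example~\ref{end} to supply the target semiring. Throughout I take the semiring structure on $\End_\M(M)$ to be $\la \End_\M(M), +, \circ, \0, \id\ra$, which is the appropriate one when $M$ is regarded as a \emph{left} $S$-semimodule (for right semimodules one would instead use the variant from Example~\ref{end} with $f \cdot g \bydef g \circ f$).

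For the ``only if'' direction I would start from an $S$-semimodule structure $\cdot$ on $M$ and, for each $a \in S$, define the homothety $\lambda_a \colon M \to M$ by $\lambda_a(x) \bydef a \cdot x$. The first step is to check that $\lambda_a \in \End_\M(M)$: additivity of $\lambda_a$ is exactly (SM2), and $\lambda_a(0_M) = 0_M$ is the right-hand half of (SM4). The second step is to verify that $a \mapsto \lambda_a$ is a semiring homomorphism $S \to \End_\M(M)$; reading the identities pointwise, $\lambda_{a+b} = \lambda_a + \lambda_b$ is (SM3), $\lambda_{ab} = \lambda_a \circ \lambda_b$ is (SM1), $\lambda_{0_S} = \0$ is the left-hand half of (SM4), and $\lambda_{1_S} = \id$ is (SM5).

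For the ``if'' direction, given a semiring homomorphism $\phi \colon S \to \End_\M(M)$, I would define a scalar multiplication on $M$ by $a \cdot x \bydef \phi(a)(x)$ and then obtain the five semimodule axioms by running the same equivalences in reverse: (SM1) from $\phi(ab) = \phi(a) \circ \phi(b)$; (SM2) and the equation $a \cdot 0_M = 0_M$ from the fact that each $\phi(a)$ is a monoid endomorphism; (SM3) from $\phi(a+b) = \phi(a) + \phi(b)$; $0_S \cdot x = 0_M$ from $\phi(0_S) = \0$; and (SM5) from $\phi(1_S) = \id$.

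Both directions are routine verifications, so there is no real obstacle; the two points that deserve a little care are (i) recording which of the two semiring structures on $\End_\M(M)$ from Example~\ref{end} is in play, so that the multiplicative axiom (SM1) corresponds to composition taken in the correct order, and (ii) noting that the maps $\lambda_a$ automatically preserve the monoid identity, which is precisely what keeps the clause ``$a \cdot 0_M = 0_M$'' of (SM4) from being vacuous. One could additionally remark that the two assignments are mutually inverse, so that the statement in fact sharpens to a bijection between $S$-semimodule structures on $M$ and semiring homomorphisms $S \to \End_\M(M)$.
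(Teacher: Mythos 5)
Your proof is correct and follows essentially the same route as the paper: the forward direction sends $a$ to the homothety $x \mapsto a\cdot x$ and matches each semimodule axiom to the corresponding semiring-homomorphism condition, and the converse is restriction of scalars along the given homomorphism, with the same caveat about composition order for right semimodules. No gaps.
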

\begin{proof}
First suppose that $M$ is a left $S$-semimodule and, for $a \in S$, let $h_a: x \in M \lmapsto a \cdot x \in M$. By (SM2) and (SM5) of Definition~\ref{semimodule}, $h_a$ is a monoid homomorphism, hence we have a map
\begin{equation}\label{xi}
\xi: \quad a \ \in \ S \quad \lmapsto \quad h_a \ \in \ \End_\M(M).
\end{equation}
By (SM4) of Definition~\ref{semimodule}, $\xi(0_S) = \0$, by (SM5) $\xi(1) = \id$, and the fact that $\xi$ preserves sums comes immediately from (SM3). Last, by (SM1), if $M$ is a left semimodule,  we have $h_{ab}(x) = (ab) \cdot x = a \cdot (b \cdot x) = h_a (h_b(x)) = (h_a \circ h_b)(x)$, for all $a, b \in S$ and $x \in M$. So $\xi$ is a semiring homomorphism from $S$ to $\la\End_\M(M), +, \circ, \0, \id_M\ra$. The proof for right semimodules is similar but the multiplication in $\End_\M(M)$ is the composition in the reverse order.

The converse implication is trivial. Indeed it is true in general that, if $h: S \lto T$ is a semiring homomorphism and $M$ is an $T$-semimodule, then $h$ induces an $S$-semimodule structure on $M$ defined by $a \cdot_h x = h(a) \cdot x$ for all $a \in S$ and $x \in M$.\footnote{This situation, in the case of idempotent semirings, will be discussed more in details in Section~\ref{functor}.} Here we have precisely this situation, with $T = \End_\M(M)$.
\end{proof}

\begin{corollary}\label{semirepr}
Let $S$ be a semiring and $S^+ = \la S, +, 0_S\ra$ its additive monoid reduct. Then $S$ can be embedded in the semiring $\End_\M(S^+)$.
\end{corollary}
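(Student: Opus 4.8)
The plan is to apply Proposition~\ref{semiend} to the regular action of $S$ on itself. First I would observe that the additive reduct $S^+$ carries a natural left $S$-semimodule structure in which the scalar multiplication is simply the semiring multiplication of $S$, i.e. $a \cdot x \bydef a \cdot x$ for $a, x \in S$. Indeed, axioms (SM1)--(SM5) of Definition~\ref{semimodule} translate, respectively, into: associativity of $\cdot$ and the unit law (both part of (S2)), the two distributive laws (S3), and the zero-absorption law (S4). Hence Proposition~\ref{semiend} yields a semiring homomorphism $\xi\colon S \to \End_\M(S^+)$, explicitly $\xi\colon a \lmapsto h_a$ with $h_a(x) = a \cdot x$, exactly as in \eqref{xi}.

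It then remains to check that $\xi$ is injective, and this is where the distinguished identity $1$ of $S$ does all the work: if $h_a = h_b$, then evaluating at $1$ gives $a = a \cdot 1 = h_a(1) = h_b(1) = b \cdot 1 = b$. Therefore $\xi$ is an embedding of $S$ into $\la \End_\M(S^+), +, \circ, \0, \id \ra$, which is the asserted representation.

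I do not expect any genuine obstacle here; the statement is essentially a one-line consequence of the preceding proposition together with the presence of a two-sided multiplicative identity. The only point demanding a little care is bookkeeping about which semiring reduct of $\End_\M(S^+)$ one lands in: Proposition~\ref{semiend} applied to a \emph{left} semimodule produces the endomorphism semiring with multiplication given by composition $\circ$ (not its opposite $f \cdot g \bydef g \circ f$ of Example~\ref{end}), so the embedding above is into that particular reduct; had we used the right regular action of $S$ on $S^+$ instead, the same argument would embed $S$ into the opposite semiring $\la \End_\M(S^+), +, \cdot, \0, \id \ra$. Either version establishes the corollary as stated.
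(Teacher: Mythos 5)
Your proof is correct and follows essentially the same route as the paper: view $S^+$ as the (free, one-generated) left $S$-semimodule given by the regular action, apply Proposition~\ref{semiend} to obtain $\xi$, and verify injectivity by evaluating $h_a$ at $1$. The extra remark about which multiplicative reduct of $\End_\M(S^+)$ receives the embedding is sensible bookkeeping but does not change the argument.
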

\begin{proof}
$S$ is the free one-generated $S$-semimodule, hence there exists the semiring homomorphism $\xi$ defined by (\ref{xi}). If $a \neq b \in S$, $h_a(1) = a \cdot 1 = a \neq b = b \cdot 1 = h_b(1)$ whence $\xi(a) \neq \xi(b)$.
\end{proof}

Let $S$ be a semiring and $M$ an $S$-semimodule. An \emph{$S$-subsemimodule} $N$ of $M$ is a submonoid of $M$ which is stable with respect to the scalar multiplication. It is easy to verify that, for any family $\{N_i\}_{i \in I}$ of $S$-subsemimodules of $M$, $\left\la \bigcap_{i \in I} N_i, +, 0 \right\ra$ is still a subsemimodule of $M$. Thus, given an arbitrary subset $X$ of $M$, we define the $S$-subsemimodule $\la X \ra$ \emph{generated by $X$} as the intersection of all the $S$-subsemimodules of $M$ containing $X$. Conversely, given a subsemimodule $N$ of $M$, we will say that a subset $X$ of $M$ is a \emph{system of generators} for $N$~--- or that $X$ \emph{generates} $N$~--- if $N = \la X \ra$.

If $\{M_i\}$ is a family of $S$-semimodules, $M$ is an $S$-semimodule and $X$ is a non-empty set, the \emph{product} $\la \prod_{i \in I} M_i, +, (0_i)_{i \in I} \ra$ of the family $\{M_i\}_{i \in I}$, and $\la M^X, +, \0 \ra$ are clearly $S$-semimodules with the operations defined pointwise. $M^X$ is also called the \emph{power semimodule} of $M$ by $X$.

As in the case of ring modules, given a semiring $S$, an $S$-semimodule $M$ and a finite family $\{x_i\}_{i = 1}^n$ of elements of $M$, we call a \emph{linear combination} of the family $\{x_i\}$ any sum $\sum_{i = 1}^n a_i \cdot x_i$ with $a_i \in S$ for all $i =1, \ldots, n$.

It is obvious that, for any semiring $S$, if $M$ is an $S$-semimodule and $\varnothing \neq X \subseteq M$, then $\la X \ra = S \cdot X = \left\{\sum_{i = 1}^n a_i \cdot x_i \ \Big| \ a_i \in S, x_i \in X, n \in \N\right\}$, i.~e. $\la X \ra$ is the set of all the linear combinations of elements of $X$. Therefore, recalling that an $S$-semimodule is called \emph{cyclic} if it is generated by a single element $v$, such a semimodule shall be denoted also by $S \cdot v$.

We now investigate several basic constructions and properties of the categories of semimodules over semirings. For the general categorical definitions of the concepts involved (such as free and projective objects, products and coproducts etc.), we refer the reader to \cite{cats}. According to Remark~\ref{notation}, in all the definitions and statements regarding semimodules on a non-commutative semiring $S$, whenever we say simply $S$-semimodule or write $S\Mod$, we mean that the definition or the result holds for both left and right semimodules (suitably reformulated, where necessary).

Recalling that, if $S$ is a semiring and $X$ is a set, the \emph{support} of a map $f: X \lto S$ is the set $\supp f = \{x \in X \mid f(x) \neq 0_S\}$, we have
\begin{proposition}\label{freemod}
For any set $X$, the free $S$-semimodule $\fr_S(X)$ generated by $X$ is the set --- denoted by $S^{(X)}$ --- of functions from $X$ to $S$ with finite support, equipped with pointwise sum and scalar multiplication, and with the map $\chi: x \in X \lmapsto \chi_x \in S^{(X)}$, where $\chi_x$ is defined, for all $x \in X$, by
\begin{equation}\label{chi}
\chi_x(y) = \left\{\begin{array}{ll} 0_S & \textrm{if } y \neq x \\ 1 & \textrm{if } y = x\end{array}\right..
\end{equation}
\end{proposition}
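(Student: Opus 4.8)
The plan is to verify that $S^{(X)}$, equipped with the stated pointwise operations and the map $\chi$, satisfies the universal property of the free $S$-semimodule on $X$: namely that for every $S$-semimodule $M$ and every map $g\colon X \lto M$ there exists a unique $S$-semimodule homomorphism $\bar g\colon S^{(X)} \lto M$ with $\bar g \circ \chi = g$. First I would record the easy structural facts: $S^{(X)}$ is a submonoid of the power semimodule $S^X$ (finite supports are closed under pointwise sum, and the zero map has empty support) and it is closed under scalar multiplication (since $\supp(a\cdot f)\subseteq \supp f$), so by the remarks preceding the proposition $S^{(X)}$ is an $S$-subsemimodule of $S^X$, hence an $S$-semimodule in its own right. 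I would also note that each $\chi_x$ lies in $S^{(X)}$, its support being $\{x\}$ (using $1\neq 0_S$ in any nontrivial semiring; the degenerate one-element case is harmless).

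Next I would establish the crucial normal-form lemma: every $f \in S^{(X)}$ can be written as the finite linear combination $f = \sum_{x \in \supp f} f(x)\cdot \chi_x$, and this representation is the unique way to write $f$ as a linear combination of the $\chi_x$ with all but finitely many coefficients zero. The identity itself is checked by evaluating both sides at an arbitrary $y \in X$ using (SM3), (SM4) and the definition of $\chi_x$; uniqueness follows because evaluating any linear combination $\sum_i a_i \chi_{x_i}$ at $y$ returns the coefficient attached to $\chi_y$ (or $0_S$ if $y$ does not occur). In particular the set $\{\chi_x : x \in X\}$ generates $S^{(X)}$, so $S^{(X)} = \la \chi(X)\ra$.

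Then I would construct $\bar g$ and check the universal property. Given $g\colon X \lto M$, define $\bar g(f) = \sum_{x \in \supp f} f(x)\cdot g(x)$; this is well-defined because the sum is finite and $M$ is a commutative monoid. To see $\bar g$ is additive, compare $\bar g(f_1 + f_2)$ with $\bar g(f_1) + \bar g(f_2)$ by enlarging both sums to range over the finite set $\supp f_1 \cup \supp f_2$ (adding zero terms changes nothing by (SM4)) and applying (SM3) coefficientwise; to see it respects scalar multiplication use (SM1) and (SM2). Clearly $\bar g(\chi_x) = 1\cdot g(x) = g(x)$ by (SM5), so $\bar g\circ\chi = g$. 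For uniqueness, any homomorphism $h$ with $h\circ\chi = g$ must send $f = \sum_{x}f(x)\chi_x$ to $\sum_x f(x)\cdot h(\chi_x) = \sum_x f(x)\cdot g(x) = \bar g(f)$, since $h$ preserves finite sums and scalar multiples; hence $h = \bar g$. Finally I would remark that, as is standard, the universal property determines $\fr_S(X)$ up to a unique isomorphism commuting with the insertion of generators, so identifying $\fr_S(X)$ with $S^{(X)}$ is legitimate.

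The computations here are all routine; the only point demanding a little care is the bookkeeping in the additivity argument for $\bar g$ — making sure that passing to the common finite index set $\supp f_1 \cup \supp f_2$ is justified by (SM4) before invoking distributivity (SM3) — and, as a minor aside, handling the trivial semiring $S = \{0\}$ where $1 = 0_S$ and $S^{(X)}$ collapses to $\{0\}$, which still satisfies the universal property vacuously. No genuine obstacle arises; the proposition is essentially the semiring analogue of the classical construction of a free module as finitely supported functions.
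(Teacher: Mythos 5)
Your proposal is correct and follows essentially the same route as the paper: decompose each finitely supported $\alpha$ as $\sum_{x\in\supp\alpha}\alpha(x)\cdot\chi_x$, define the extension by $\alpha\mapsto\sum_{x\in\supp\alpha}\alpha(x)\cdot g(x)$, and verify the universal property with the standard uniqueness computation. The only difference is that you spell out the bookkeeping (passing to $\supp f_1\cup\supp f_2$, the trivial-semiring case) that the paper dismisses as straightforward.
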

\begin{proof}
Let $\la M, +, 0_M \ra$ be any $S$-semimodule and $f: X \lto M$ be an arbitrary map. We shall prove that there exists a unique $S$-semimodule morphism $h_f: S^{(X)} \lto M$ such that $h_f \circ \chi = f$. For any $\alpha \in S^{(X)}$, let us set $A = \supp\alpha$, and observe that $\alpha = \sum_{x \in A} \alpha(x) \cdot \chi_x$; then let us set 
\begin{equation}\label{hf}
h_f: \alpha  \in S^{(X)} \lmapsto \sum_{x \in A} \alpha(x) \cdot f(x) \in M.
\end{equation}
For any fixed $\ov x \in X$, $(h_f \circ \chi)(\ov x) = h_f(\chi_{\ov x}) = 1 \cdot f(\ov x) = f(\ov x)$, hence $h_f \circ \chi = f$.

The proof of the fact that $h_f$ is a semimodule homomorphism is straightforward. Moreover, if $h: S^{(X)} \lto M$ is an $S$-semimodule homomorphism such that $h \circ \chi = f$, for any $\alpha \in S^{(X)}$ we have $h(\alpha) = h\left(\sum_{x \in A} \alpha(x) \cdot \chi_x\right) = \sum_{x \in A} \alpha(x) \cdot h\left(\chi_x\right) = \sum_{x \in A} \alpha(x) \cdot (h\circ\chi)(x) = \sum_{x \in A} \alpha(x) \cdot f(x) = h_f(\alpha)$, hence $h_f$ is unique.
\end{proof}

Obviously, every $S$-semimodule is homomorphic image of a free one. Indeed, if $M$ is an $S$-semimodule, we can consider the free $S$-semimodule $S^{(M)}$ and the $S$-semimodule homomorphism $h_{\id_M}: S^{(M)} \lto M$ defined as in (\ref{hf}) by replacing $f$ with $\id_M$. It is immediate to verify that $h_{\id_M}$ is onto; moreover, by Proposition \ref{freemod}, $h_{\id_M} \circ \chi = \id_M$.

\begin{definition}\label{homsm}
Given $S$-semimodules $M$ and $N$, we define, on $\hom_S(M,N)$, the following operations and constants:
\begin{enumerate}
\item[-]for all $f, g \in \hom_S(M,N)$, the homomorphism $f + g$ is defined by $(f + g)(x) = f(x) + g(x)$, for all $x \in M$,
\item[-]$\0$ is the $0$-constant homomorphism,
\end{enumerate}
and, if $S$ is commutative,
\begin{enumerate}
\item[-]for all $a \in S$ and $f \in \hom_S(M,N)$, $a \cdot f$ is the map defined by $(a \cdot f)(x) = a \cdot f(x) = f(a \cdot x)$, for all $x \in M$.
\end{enumerate}
It is easy to see that $\la \hom_S(M,N), +, \0 \ra$ is a commutative monoid and, if $S$ is a commutative semiring, it is an $S$-semimodule with scalar multiplication $\cdot$. If $N = M$, the monoid (or, in case, the semimodule) of the endomorphisms $\hom_S(M,M)$ will be denoted by $\End_S(M)$.
\end{definition}



In order to show next result, we introduce the following notation. Given a semiring $S$ and two non-empty sets $X$ and $Y$, let $S^{X \times (Y)}$ be the commutative monoid of functions from $X \times Y$ to $S$ --- equipped with pointwise sum --- with finite support in the second variable, namely,
$$S^{X \times (Y)} \bydef \{k \in S^{X \times Y} \mid \forall x \in X \quad k(x,{}_-) \in S^{(Y)}\}.$$

It is easy to see that $S^{X \times (Y)}$ also enjoys a structure of $S$-bisemimodule in an obvious way.

\begin{theorem}\label{homofree}
Let $S$ be a semiring and $S^{(X)}$ and $S^{(Y)}$ free $S$-semimodules. The two commutative monoids $\hom_S(S^{(X)},S^{(Y)})$ and $S^{X \times (Y)}$ are isomorphic and, if $S$ is commutative, they are isomorphic as $S$-semimodules.
\end{theorem}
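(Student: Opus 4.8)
The plan is to exhibit explicit mutually inverse homomorphisms between $\hom_S(S^{(X)}, S^{(Y)})$ and $S^{X \times (Y)}$, imitating the classical module-theoretic correspondence between a homomorphism and its ``matrix''. First I would fix the canonical bases $\{\chi_x\}_{x \in X}$ of $S^{(X)}$ and $\{\chi_y\}_{y \in Y}$ of $S^{(Y)}$ given by Proposition~\ref{freemod}. Given a homomorphism $\varphi \colon S^{(X)} \lto S^{(Y)}$, for each $x \in X$ the element $\varphi(\chi_x)$ lies in $S^{(Y)}$, hence has finite support; so I can define $k_\varphi \in S^{X \times (Y)}$ by $k_\varphi(x,y) = \varphi(\chi_x)(y)$, and the finite-support-in-the-second-variable condition is automatic. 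Conversely, given $k \in S^{X \times (Y)}$, I would define $\varphi_k \colon S^{(X)} \lto S^{(Y)}$ on the basis by sending $\chi_x$ to the element $y \mapsto k(x,y)$ of $S^{(Y)}$, and extend by the universal property of the free semimodule (Proposition~\ref{freemod}): there is a unique $S$-semimodule homomorphism $\varphi_k$ with $\varphi_k \circ \chi = (x \mapsto k(x,{}_-))$. Explicitly, for $\alpha \in S^{(X)}$ with $A = \supp\alpha$ one has $\varphi_k(\alpha)(y) = \sum_{x \in A} \alpha(x)\, k(x,y)$, a finite sum.

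Next I would check that these assignments $\varphi \mapsto k_\varphi$ and $k \mapsto \varphi_k$ are mutually inverse. One direction, $k_{\varphi_k} = k$, is immediate from $\varphi_k(\chi_x)(y) = k(x,y)$. For the other direction, $\varphi_{k_\varphi} = \varphi$, both maps are $S$-semimodule homomorphisms on $S^{(X)}$ that agree on the generators $\chi_x$ (since $\varphi_{k_\varphi}(\chi_x)(y) = k_\varphi(x,y) = \varphi(\chi_x)(y)$), hence they coincide by the uniqueness clause in the universal property of $S^{(X)}$. Then I would verify that $\varphi \mapsto k_\varphi$ is additive and sends $\0$ to $\0$: this is just pointwise, since $(\varphi + \psi)(\chi_x)(y) = \varphi(\chi_x)(y) + \psi(\chi_x)(y)$, and the $S^{X \times (Y)}$ side also carries the pointwise sum. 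Finally, when $S$ is commutative, I would check compatibility with the scalar action: $(a \cdot \varphi)(\chi_x)(y) = a\,\varphi(\chi_x)(y)$ matches the action $a \cdot k$ on $S^{X \times (Y)}$, using that the $S$-semimodule structure on $\hom_S$ from Definition~\ref{homsm} requires commutativity of $S$ (so that $a \cdot \varphi$ is again a homomorphism) and that the bisemimodule structure on $S^{X \times (Y)}$ restricts, in the commutative case, to an ordinary $S$-semimodule structure.

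The step I expect to be the main (though still modest) obstacle is verifying that $\varphi_k$ is well defined, i.e.\ genuinely lands in $S^{(Y)}$ and really is a homomorphism, without invoking any finiteness of $X$ or $Y$. The point is that one must not try to write $\varphi_k$ as an infinite ``matrix times vector'' over all of $X$; instead one works one basis vector $\chi_x$ at a time, where the target $y \mapsto k(x,{}_-)$ has finite support by the defining condition on $S^{X \times (Y)}$, and then appeals purely formally to the universal property of Proposition~\ref{freemod} to obtain $\varphi_k$ and its uniqueness. Once this is set up correctly, every remaining verification is a routine pointwise computation using the semiring axioms and the pointwise definitions of the operations on both monoids, so I would not grind through them in detail. (This is exactly the representation whose finitely generated specialization, with $X$ and $Y$ finite, recovers the usual matrix calculus, as the paper notes.)
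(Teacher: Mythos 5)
Your proposal is correct and follows essentially the same route as the paper: the paper defines $h_k(f)=\sum_{x\in X}f(x)k(x,{}_-)$ directly (a finite sum since $f$ has finite support) and sets $k_h(x,y)=h(\chi_x)(y)$, exactly your correspondence, with your appeal to the universal property of $S^{(X)}$ being only a cosmetic variant of the paper's direct verification. The remaining checks (mutual inverses, additivity, and the scalar action in the commutative case) are handled the same way in both arguments.
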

\begin{proof}
For any $k \in S^{X \times (Y)}$, let $h_k: S^{(X)} \lto S^{(Y)}$ be the map defined by $f \lmapsto \sum_{x \in X} f(x) k(x,-)$. Since $f$ has finite support it is easy to check that $h_k$ is a well-defined semimodule homomorphism.

Conversely, let us observe that, for any $f \in S^{(X)}$, $f = \sum_{x \in X} f(x) \chi_x$, with the maps $\chi_x$ defined by (\ref{chi}). Then, for all $h \in \hom_S(S^{(X)},S^{(Y)})$,
$$h(f) = h\left(\sum_{x \in X} f(x) \chi_x\right) = \sum_{x \in X} f(x) h(\chi_x).$$
Let $h \in \hom_S(S^{(X)},S^{(Y)})$ and define $k_h: (x,y) \in X \times Y \lmapsto h(\chi_x)(y) \in S$. It is easy to see that $k_h$ has finite support in the second variable --- that is $k \in S^{X \times (Y)}$ --- and that $h = h_{k_h}$.

So let $\eta: k \in S^{X \times (Y)} \lmapsto h_k \in \hom_S(S^{(X)},S^{(Y)})$; we shall now prove that $\eta$ is bijective. The fact that $\eta$ is surjective has just been proved. If $k \neq l \in S^{X \times (Y)}$, there exists a pair $(\ov x, \ov y) \in X \times Y$ such that $k(\ov x, \ov y) \neq l(\ov x, \ov y)$; then we have
$$h_k(\chi_{\ov x})(\ov y) = k(\ov x, \ov y) \neq l(\ov x, \ov y) = h_l(\chi_{\ov x})(\ov y),$$
whence $\eta$ is injective.

The fact that $\eta$ is a monoid homomorphism and, if $S$ is commutative, also a semimodule homomorphism is trivial.
\end{proof}

Notice that Theorem~\ref{homofree} yields a matrix representation of homomorphisms between finitely generated free semimodules: $\hom_S(S^m,S^n) \cong S^{m \times n}$ (in $\M$ or, if $S$ is commutative, in $S\Mod$), for all $m,n \in \N$.

\begin{theorem}\label{homo}
Let $M$ and $N$ be $S$-semimodules, $X$ and $Y$ be two sets of generators for $M$ and $N$ respectively, and $\pi: S^{(X)} \lto M$ and $\pi': S^{(Y)} \lto N$ be the canonical quotient morphisms.

Then, for any homomorphism $h: M \lto N$ there exists $k \in S^{X \times (Y)}$ such that $h \circ \pi = \pi' \circ h_k$.
\end{theorem}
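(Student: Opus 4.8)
The plan is to reduce everything to the universal property of the free semimodule $S^{(X)}$ recorded in Proposition~\ref{freemod}: a homomorphism out of $S^{(X)}$ is completely determined by its values on the generators $\chi_x$, $x \in X$. Hence it suffices to produce a $k \in S^{X \times (Y)}$ for which the two $S$-semimodule homomorphisms $h \circ \pi$ and $\pi' \circ h_k$ from $S^{(X)}$ to $N$ agree on every $\chi_x$.

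First I would record what each side does to a basis element. The morphism $\pi$ is the canonical quotient morphism, i.e. the unique extension of the inclusion $X \hookrightarrow M$, so $\pi(\chi_x) = x$ and therefore $(h \circ \pi)(\chi_x) = h(x)$. Since $Y$ generates $N$, the subsemimodule $\la Y \ra$ equals $S \cdot Y$, the set of all finite linear combinations of elements of $Y$ (as noted just before Proposition~\ref{freemod}); thus, for each $x \in X$, I may pick a function $k(x,-) \in S^{(Y)}$ with $h(x) = \sum_{y \in Y} k(x,y) \cdot y$. Doing this for every $x \in X$ defines a map $k : X \times Y \lto S$ which by construction has finite support in the second variable, so $k \in S^{X \times (Y)}$.

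Then I would compute the other side. For the homomorphism $h_k$ of Theorem~\ref{homofree} one has $h_k(\chi_x) = \sum_{x' \in X} \chi_x(x')\, k(x',-) = k(x,-)$, and applying $\pi'$ (the unique extension of $Y \hookrightarrow N$, so $\pi'(\chi_y) = y$) gives $(\pi' \circ h_k)(\chi_x) = \pi'\bigl(\sum_{y \in Y} k(x,y)\chi_y\bigr) = \sum_{y \in Y} k(x,y)\, \pi'(\chi_y) = \sum_{y \in Y} k(x,y)\, y = h(x)$. Hence $h \circ \pi$ and $\pi' \circ h_k$ are two $S$-semimodule homomorphisms $S^{(X)} \lto N$ that agree on the generating set $\{\chi_x\}_{x \in X}$, so by the uniqueness clause of Proposition~\ref{freemod} they coincide, which is the claim.

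There is essentially no real obstacle beyond bookkeeping; the only point deserving care is the construction of $k$. Unlike in Theorem~\ref{homofree}, $k$ need not be unique — a typical element of $N$ admits many expressions as an $S$-linear combination of elements of $Y$ when $Y$ is merely a generating set and not a basis — so $k$ arises from an arbitrary choice, and the only thing to verify is that it lands in $S^{X \times (Y)}$, which is automatic because each $h(x)$ is, by definition of a linear combination, a \emph{finite} sum.
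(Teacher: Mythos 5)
Your proof is correct and follows essentially the same route as the paper: the paper obtains the lift $\ov h$ of $h \circ \pi$ through $\pi'$ abstractly, from the fact that free objects are projective in any construct, and then identifies $\ov h = h_k$ via Theorem~\ref{homofree}, whereas you simply unpack that lifting explicitly by choosing, for each $x \in X$, a finite expression of $h(x)$ as an $S$-linear combination of elements of $Y$. Your closing remark that $k$ is not unique, and the verification on the generators $\chi_x$ via the uniqueness clause of Proposition~\ref{freemod}, match the ``concrete'' computation the paper appends at the end of its own proof.
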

\begin{proof}
Consider the following diagram
\begin{equation*}
\xymatrix{
S^{(X)} \ar@{-->}[rr]^{\ov h} \ar[dd]_\pi && S^{(Y)} \ar[dd]^{\pi'} \\
&&\\
M	\ar[rr]_h && N
}.
\end{equation*}
First observe that in any construct (namely, a category which is concrete over the one of sets) free objects are projective; then the existence of the morphism $\ov h$ closing such a diagram follows immediately from the fact that $S^{(X)}$ is free. Moreover we know by Theorem~\ref{homofree} that $\ov h = h_k$ for some $k \in S^{X \times (Y)}$.

Nonetheless it is interesting to notice that, since each element $m$ of $M$ can be written as $\sum_{x \in X} a_x \pi(\chi_x)$, with the $a_x$'s in $S$, then
$$\begin{array}{lllll}
h(m) & = & h\left(\sum_{x \in X} a_x \pi(\chi_x)\right) & = & \sum_{x \in X} a_x h(\pi(\chi_x)) \\
		 & = & \sum_{x \in X} a_x \pi'(h_k(\chi_x)) & = & \sum_{x \in X} a_x \pi'(k(x,{}_-)).
\end{array}$$
This shows how ``concretely'' $k$ determines $h$.
\end{proof}

\section{Projective semimodules}
\label{proj}

In this section we will show some results about projective semimodules and, in particular, we shall characterize finitely generated projective semimodules. Such a characterization will play a prominent role in the proof of the functorial nature of the construction of the \gr group of a semiring.

\begin{proposition}\cite[Proposition (17.16)]{golan}\label{projgol}
For any semiring $S$ an $S$-semimodule $M$ is projective if and only if it is a retract of a free $S$-semimodule.
\end{proposition}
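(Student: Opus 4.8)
The plan is to prove both implications directly from the lifting property defining projective objects, using two facts already available in the text: that every $S$-semimodule is a homomorphic image of a free one, via the canonical surjection $h_{\id_M}\colon S^{(M)} \lto M$ constructed right after Proposition~\ref{freemod}; and that in a construct free objects are projective, the observation invoked in the proof of Theorem~\ref{homo}.

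For the implication \emph{projective $\Rightarrow$ retract of a free semimodule}, I would take the canonical surjection $\pi \bydef h_{\id_M}\colon S^{(M)} \lto M$. Since $M$ is projective, applying the lifting property to the surjection $\pi$ and to the identity morphism $\id_M\colon M \lto M$ yields a homomorphism $\iota\colon M \lto S^{(M)}$ with $\pi \circ \iota = \id_M$. Hence $M$ is a retract of the free semimodule $S^{(M)}$, with section $\iota$ and retraction $\pi$.

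For the converse, suppose $M$ is a retract of a free semimodule $F$, say $\rho \circ \iota = \id_M$ with $\iota\colon M \lto F$ and $\rho\colon F \lto M$. To check projectivity of $M$, take an arbitrary surjective homomorphism $g\colon A \lto B$ and an arbitrary homomorphism $f\colon M \lto B$. Because $F$ is free, hence projective, the composite $f \circ \rho\colon F \lto B$ lifts through $g$ to some $\psi\colon F \lto A$ with $g \circ \psi = f \circ \rho$. Setting $\tilde f \bydef \psi \circ \iota\colon M \lto A$, one computes $g \circ \tilde f = g \circ \psi \circ \iota = f \circ \rho \circ \iota = f$, so $f$ lifts through $g$ and $M$ is projective.

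There is no serious obstacle here; the one point deserving a word of care is the precise form of the defining lifting property, stated with respect to surjective semimodule homomorphisms (equivalently, regular epimorphisms in $S\Mod$), which is exactly what is used when lifting along $\pi$ and along $g$. Everything else is a direct diagram chase, the only routine checks being that the composites $\psi \circ \iota$ and $h_{\id_M}$ are semimodule homomorphisms and that $h_{\id_M}$ is onto — both already noted in the excerpt.
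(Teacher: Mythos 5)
Your proof is correct and follows the standard two-step argument: lifting $\id_M$ along the canonical surjection $h_{\id_M}\colon S^{(M)} \lto M$ for one direction, and composing a lift through the free cover with the section and retraction for the other. The paper gives no proof of its own (it cites Golan's Proposition (17.16)), and your argument is essentially the one found there, correctly using only the facts already established in the text, namely that $h_{\id_M}$ is onto and that free semimodules are projective with respect to surjections.
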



Let $S$ be a semiring and, for all $n \in \N$, let $M_{n}(S)$ be the set of all $n \times n$ square matrices of elements of $S$. It is easy to verify that the structure $\la M_{n}(S), +, \star, o, \iota \ra$, where
\begin{itemize}
\item $o$ is the $0_S$-constant matrix,
\item $\iota$ is the matrix whose components are defined by $\id_{ij} = \left\{\begin{array}{ll} 1 & \textrm{if } i=j \\ 0_S & \textrm{otherwise} \end{array}\right.$,
\item $+$ is the componentwise sum,
\item the operation $\star$ is defined by $(a_{ij}) \star (b_{ij}) = \left(\sum_{k=1}^n a_{ik} b_{kj}\right)$,
\end{itemize}
is a semiring, called the \emph{semiring of $n \times n$ square matrices} over $S$; moreover, such a semiring is isomorphic to a familiar one, as we are going to show.

\begin{theorem}\label{matrixendo}
The semirings $M_n(S)$ and $\End_S(S^n)$ are isomorphic, for any semiring $S$ and any natural number $n$.
\end{theorem}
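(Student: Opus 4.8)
The plan is to exhibit an explicit isomorphism using the matrix representation of endomorphisms already established in Theorem~\ref{homofree}. Recall that $S^n = S^{(X)}$ with $X = \{1,\dots,n\}$, so Theorem~\ref{homofree} gives a bijection $\eta\colon S^{n\times(n)} \to \End_S(S^n)$ which is already known to be a monoid homomorphism with respect to the additive structure (and, when $S$ is commutative, an $S$-semimodule homomorphism). Since $X$ and $Y = \{1,\dots,n\}$ are both finite, $S^{n\times(n)}$ is nothing but $M_n(S)$: a function $k\colon X\times Y \to S$ is exactly an $n\times n$ matrix, with the finite-support-in-the-second-variable condition being vacuous. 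Thus $\eta$ is already a bijection and an additive-monoid isomorphism; all that remains is to check it is multiplicative, i.e. that it carries the matrix product $\star$ to composition $\circ$ in $\End_S(S^n)$.

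First I would fix notation: for $k\in M_n(S)$, the endomorphism $h_k = \eta(k)$ sends $f\in S^n$ to $\sum_{x\in X} f(x)\,k(x,-)$, and conversely, given $h$, the associated matrix is $k_h(x,y) = h(\chi_x)(y)$. The key computation is then: given $k,\ell\in M_n(S)$, show $h_k\circ h_\ell = h_{\ell\star k}$ (the order reversal being forced by the convention, since composition reads right-to-left while matrix multiplication as defined acts on the left; I would double-check which order the paper's conventions make come out, and state $\eta$ either on $M_n(S)$ or on its opposite accordingly, or simply note that $\eta$ is an anti-isomorphism and hence $\End_S(S^n)\cong M_n(S)^{\op}\cong M_n(S^{\op})$ — but for $\End_S$ acting the standard way on column-style elements the cleanest statement is an honest isomorphism once the product convention is aligned). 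Concretely, I would evaluate $(h_k\circ h_\ell)(\chi_x)$ at $y$: we have $h_\ell(\chi_x) = \sum_{z} \chi_x(z)\,\ell(z,-) = \ell(x,-)$, so $h_k(h_\ell(\chi_x)) = h_k(\ell(x,-)) = \sum_z \ell(x,z)\,k(z,-)$, whose $y$-component is $\sum_z \ell(x,z)\,k(z,y)$, which is exactly the $(x,y)$ entry of the matrix product. Since a homomorphism between semimodules is determined by its values on the generators $\chi_x$ (using additivity and scalar multiplication, as in the proof of Theorem~\ref{homofree}), this identity on generators suffices to conclude $h_k\circ h_\ell$ equals the endomorphism attached to the product matrix.

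It then remains only to observe that $\eta$ sends the identity matrix $\iota$ to $\id_{S^n}$: indeed $h_\iota(\chi_x) = \iota(x,-) = \chi_x$, so $h_\iota$ fixes every generator and hence is the identity endomorphism. Combining bijectivity (Theorem~\ref{homofree}), additivity (Theorem~\ref{homofree}), multiplicativity and unit-preservation (just verified) gives that $\eta$, possibly composed with the matrix-transpose/opposite convention fix, is a semiring isomorphism $M_n(S)\cong\End_S(S^n)$. When $S$ is commutative this is additionally an $S$-algebra isomorphism by the corresponding clause of Theorem~\ref{homofree}.

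The only genuinely delicate point is bookkeeping with the order of composition versus the order of matrix multiplication: depending on whether one lets $\End_S(S^n)$ act on row-vectors or column-vectors, and on which of the two semiring structures on $\End_\M$ (from Example~\ref{end}) is intended, one gets either $\eta(k\star\ell)=h_k\circ h_\ell$ or $\eta(k\star\ell)=h_\ell\circ h_k$. I expect this to be the main (and only) obstacle; it is resolved purely by choosing the convention consistently, and I would state explicitly at the outset which composition order is used in $\End_S(S^n)$ so that the map comes out as a bona fide isomorphism rather than an anti-isomorphism. Everything else is the routine verification already flagged as "trivial" in Theorem~\ref{homofree}.
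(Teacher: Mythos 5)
Your proposal is correct and follows essentially the same route as the paper: both use the bijection $\eta$ from Theorem~\ref{homofree} for the additive part, check that $\iota$ goes to $\id$, and verify multiplicativity, with the order-of-composition issue resolved exactly as you anticipate --- the paper declares the product on $\End_S(S^n)$ to be composition in the reverse order (the second semiring structure from Example~\ref{end}), so that $h_{a\star b}=h_b\circ h_a$ makes $\eta$ an honest isomorphism. The only cosmetic difference is that the paper verifies multiplicativity on an arbitrary vector $\vec v$ via associativity of $\vec v\star(a\star b)$, whereas you check it on the generators $\chi_x$; the two computations are equivalent.
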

\begin{proof}
By Theorem~\ref{homofree}, there exists an isomorphism $\eta: a \in M_n(S) \lmapsto h_a \in \End_S(S^n)$ between the additive monoid reducts of the two semirings; we shall prove that $\eta$ is actually a semiring isomorphism, with the product in $\End_S(S^n)$ defined as the composition in the reverse order.

First, it can be immediately observed that $\eta(\iota) = \id$. Now, given a matrix $a = (a_{ij})$, for all $\vec v = (v_1, \ldots, v_n) \in S^n$, by definition $h_a(\vec v)$ is the matrix product $\vec v \star a = \left(\sum_{i=1}^n v_i a_{ik}\right)$. Hence it follows immediately that $h_{a \star b} (\vec v) = \vec v \star (a \star b) = (\vec v \star a) \star b = (h_b \circ h_a)(\vec v) = (h_a h_b)(\vec v)$, for all $a, b \in M_n(S)$ and $\vec v \in S^n$. Therefore $\eta$ is a semiring isomorphism.
\end{proof}

\begin{theorem}\label{finproj}
An $n$-generated $S$-semimodule $M$ is projective if and only if there exist $\vec u_1, \ldots, \vec u_n \in S^n$ such that $M \cong S \cdot \{\vec u_i\}_{i=1}^n$ and the matrix $(u_{ij})$ is a multiplicatively idempotent element of the semiring $M_{n}(S)$.
\end{theorem}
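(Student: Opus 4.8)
The plan is to exploit Proposition~\ref{projgol} together with the matrix description of endomorphisms of $S^n$ furnished by Theorems~\ref{homofree} and \ref{matrixendo}. An $n$-generated semimodule $M$ comes with a surjective homomorphism $\pi\colon S^n\lto M$; by Proposition~\ref{projgol}, $M$ is projective precisely when this (or some) surjection onto $M$ from a free semimodule splits, i.e. there is $\sigma\colon M\lto S^n$ with $\pi\circ\sigma=\id_M$. I would first reduce to the case of a free cover by $S^n$ itself: since $M$ is $n$-generated, projectivity of $M$ is equivalent to $M$ being a retract of $S^n$ (one direction is Proposition~\ref{projgol} applied to the canonical $\pi\colon S^n\lto M$; the other needs a small argument that a retract of any free semimodule that is also $n$-generated is a retract of $S^n$, which follows by composing the retraction data with $\pi$ and a chosen splitting-on-generators map, using that free objects are projective as noted in the proof of Theorem~\ref{homo}).

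Next, given a retraction $S^n \overset{\pi}{\underset{\sigma}{\rightleftarrows}} M$ with $\pi\sigma=\id_M$, form the endomorphism $e\bydef\sigma\circ\pi\in\End_S(S^n)$. Then $e^2=\sigma\pi\sigma\pi=\sigma\pi=e$, so $e$ is an idempotent of the semiring $\End_S(S^n)$, and by Theorem~\ref{matrixendo} it corresponds to a multiplicatively idempotent matrix $(u_{ij})\in M_n(S)$. I would then identify $M$ with the image of $e$: setting $\vec u_i\bydef e(\vec\chi_i)$ (the image of the $i$-th standard basis vector, which by the identification in Theorem~\ref{homofree}/\ref{matrixendo} is exactly the $i$-th row of $(u_{ij})$), one checks that $S\cdot\{\vec u_i\}_{i=1}^n=\operatorname{im}e$ and that $\operatorname{im}e\cong M$ via $\pi$ and $\sigma$ (indeed $\pi$ restricted to $\operatorname{im}e$ and $\sigma\colon M\lto\operatorname{im}e$ are mutually inverse). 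This gives the forward direction.

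For the converse, suppose $(u_{ij})$ is multiplicatively idempotent; let $e=\eta((u_{ij}))\in\End_S(S^n)$ be the corresponding idempotent endomorphism, so that $e(S^n)=S\cdot\{\vec u_i\}_{i=1}^n\cong M$. Then $e$ factors as $S^n\twoheadrightarrow e(S^n)\hookrightarrow S^n$, and because $e$ restricted to $e(S^n)$ is the identity (from $e^2=e$), the inclusion $e(S^n)\hookrightarrow S^n$ is a section of the corestriction $S^n\twoheadrightarrow e(S^n)$. Hence $e(S^n)$, and therefore $M$, is a retract of the free semimodule $S^n$, so $M$ is projective by Proposition~\ref{projgol}.

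The main obstacle I anticipate is not the idempotent-matrix bookkeeping but the care needed in the reduction step and in handling images of semimodule homomorphisms: unlike for modules over rings, one cannot subtract, so I must verify directly that $e(S^n)$ is a genuine subsemimodule, that $e|_{e(S^n)}=\id$ forces the splitting, and that the passage between a retract of an \emph{arbitrary} free semimodule and a retract of the \emph{finite} free $S^n$ is legitimate for an $n$-generated $M$ — this is where the projectivity of free objects in a construct (already invoked in Theorem~\ref{homo}) does the real work. Everything else is a translation through the isomorphisms $\End_S(S^n)\cong M_n(S)\cong S^{n\times n}$ already established.
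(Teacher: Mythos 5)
Your proposal is correct and follows essentially the same route as the paper: both directions hinge on Proposition~\ref{projgol}, with $M$ realized as the image of the section $\mu$ (your $\sigma$) of the canonical surjection $\pi\colon S^n\lto M$, and the matrix of the generators $\vec u_i=\mu(\pi(\vec e_i))$ shown to be multiplicatively idempotent. The only cosmetic difference is that you package the idempotency via the semiring isomorphism $\End_S(S^n)\cong M_n(S)$ of Theorem~\ref{matrixendo} applied to $e=\sigma\circ\pi$, whereas the paper computes $u_{ij}=\sum_k u_{ik}u_{kj}$ directly from $\pi(\vec u_i)=\vec u_i$; the two calculations are identical in substance.
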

\begin{proof}
Let $M$ be a projective $S$-semimodule, $\{v_i\}_{i=1}^n$ a generating set for $M$ and, for all $i = 1, \ldots, n$, let $\vec e_i$ be the $n$-tuple of elements of $S$ whose entries are all equal to $0_S$ except the $i$-th which is equal to $1$. Since $S^n$ is free over $\{\vec e_i\}_{i=1}^n$, the map sending each $\vec e_i$ to $v_i$ can be extended to a unique homomorphism $\pi: S^n \lto M$ which is obviously onto. Then the projectivity of $M$ implies the existence of a morphism $\mu: M \lto S^n$ such that $\pi \circ \mu = \id_M$, and $\mu$ is injective. So, if we set $\vec u_i = \mu(v_i)$, for all $i = 1, \ldots, n$, we have $M \cong S \cdot \{\vec u_i\}_{i=1}^n$ and we can identify the two semimodules and $\pi$ with the morphism extending the map which sends $\vec e_i$ to $\vec u_i$ for each $i$.

Now we have that $\pi(\vec v) = \vec v$ for all $\vec v \in S \cdot \{\vec u_i\}_{i=1}^n$, hence in particular
$$\begin{array}{l}
(u_{i1}, \ldots, u_{in}) = \vec u_i = \pi(\vec u_i) = \pi(u_{i1}, \ldots, u_{in}) \\ = \pi\left(\sum\limits_{k = 1}^n u_{ik} \cdot \vec e_k\right) = \sum\limits_{k=1}^n u_{ik} \cdot \pi(\vec e_k) = \sum\limits_{k=1}^n u_{ik} \cdot \vec u_k, \\
\textrm{for all } i = 1, \ldots, n,
\end{array}$$
that is, for all $i,j = 1, \ldots, n$, $u_{ij} = \sum\limits_{k=1}^n u_{ik} u_{kj}$. Therefore $(u_{ij}) \star (u_{ij}) = (u_{ij})$, i.~e. $(u_{ij})$ is a multiplicatively idempotent element of $M_n(S)$.

The converse implication is obtained by proving that $S \cdot \{\vec u_i\}_{i=1}^n$ is a retract of the free $S$-semimodule $S^n$, and therefore it is projective by Proposition \ref{projgol}. Indeed, if $(u_{ij})$ is a multiplicatively idempotent element of $M_n(S)$, for any element $\vec v = \sum_{i=1}^n a_i \cdot \vec u_i \in S \cdot \{\vec u_i\}_{i=1}^n$,
$$\begin{array}{l}
\pi(\vec v) = \pi\left(\sum\limits_{i=1}^n a_i \cdot \vec u_i\right) = \pi\left(\sum\limits_{i=1}^n a_i \cdot \sum\limits_{k=1}^n u_{ik} \cdot \vec e_k\right) \\
= \sum\limits_{i=1}^n a_i \cdot \sum\limits_{k=1}^n u_{ik} \cdot \vec u_k = \sum\limits_{i=1}^n a_i \cdot \vec u_i = \vec v,
\end{array}$$
whence $S \cdot \{\vec u_i\}_{i=1}^n$ is projective.
\end{proof}

\begin{remark}\label{cyclic}
A cyclic $S$-semimodule $M$ is projective if and only if there exists $u \in S$ such that $M \cong S \cdot u$ and $u^2 = u$.
\end{remark}


\section{MV-semirings and MV-semimodules}
\label{mvsemirings}

In~\cite{bdn} and~\cite{dng} semirings were studied in connection with MV-algebras --- the algebraic semantics of \L ukasiewicz infinite-valued propositional logic. In this section we recall the definition of MV-algebra and, briefly, some of the contents of the aforementioned papers; for a comprehensive study of MV-algebras we refer the reader to \cite{mvbook}. 

\begin{definition}\label{mvalg}
An \emph{MV-algebra} is an algebra $\la A, \oplus, ^*, \z \ra$ of type $(2,1,0)$ such that $\la A, \oplus, \z \ra$ is a commutative monoid, and, for all $x,y \in A$,
\begin{enumerate}[(MV1)]
\item $(x^*)^* = x$;
\item $x \oplus \z^* = \z^*$;
\item $(x^* \oplus y)^* \oplus y = (y^* \oplus x)^* \oplus x$.
\end{enumerate}
\end{definition}

Since Definition \ref{mvalg} can be formulated in the language of Universal Algebra by means of equations, MV-algebras form a variety. Congruences and homomorphisms are defined in an obvious way, namely, as equivalence relations that are compatible with $\oplus$ and $^*$ and functions that preserve the operations and the constant $\z$ respectively. 

On every MV-algebra $A$ it is possible to define another constant $\u = \z^*$ and the operation $\odot$ by $x \odot y = (x^* \oplus y^*)^*$; moreover, for all $x, y \in A$, the following well-known properties hold:
\begin{enumerate}
\item[-] $\la A, \odot, ^*, \u\ra$ is an MV-algebra;
\item[-] $^*$ is an isomorphism between $\la A, \odot, ^*, \u\ra$ and $\la A, \oplus, ^*, \z\ra$;
\item[-] $\u^* = \z$;
\item[-] $x \oplus y = (x^* \odot y^*)^*$;
\item[-] $x \oplus \u = \u$ \quad (reformulation of (MV2));
\item[-] $x \oplus x^* = \u$.
\end{enumerate}

\begin{lemma}\cite{mvbook}\label{order}
Let $A$ be an MV-algebra and $x, y \in A$. The following conditions are equivalent:
\begin{enumerate}[(a)]
\item $x^* \oplus y = \u$;
\item $x \odot y^* = \z$;
\item $y = x \oplus (y \odot x^*)$;
\item there exists an element $z \in A$ such that $x \oplus z = y$.
\end{enumerate}
\end{lemma}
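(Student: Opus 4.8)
The plan is to prove the cycle of implications (a)~$\Leftrightarrow$~(b), (a)~$\Rightarrow$~(c)~$\Rightarrow$~(d)~$\Rightarrow$~(a). The only tools needed are the De Morgan law $x \odot y = (x^* \oplus y^*)^*$ (equivalently $x \oplus y = (x^* \odot y^*)^*$) together with its instance $y \odot x^* = (y^* \oplus x^{**})^* = (y^* \oplus x)^*$, the involutivity $(x^*)^* = x$ of~(MV1), the identity $\z^* = \u$, and the two derived facts listed right after Definition~\ref{mvalg}, namely $x \oplus x^* = \u$ and $x \oplus \u = \u$. Axiom~(MV3) will be used exactly once, and that is the heart of the argument.

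First, (a)~$\Leftrightarrow$~(b) is pure bookkeeping with De Morgan: by~(MV1), $x \odot y^* = (x^* \oplus y^{**})^* = (x^* \oplus y)^*$, so $x \odot y^* = \z$ holds if and only if $(x^* \oplus y)^* = \z$; applying $^*$ to both sides of the latter and using~(MV1) and $\z^* = \u$ shows this is equivalent to $x^* \oplus y = \u$. Next, for the key step (a)~$\Rightarrow$~(c), assume $x^* \oplus y = \u$ and instantiate~(MV3): $(x^* \oplus y)^* \oplus y = (y^* \oplus x)^* \oplus x$. The left-hand side collapses, since $(x^* \oplus y)^* = \u^* = \z$ and $\z$ is the neutral element of $\oplus$, giving just $y$. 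The right-hand side, after commuting and applying~(MV1), equals $x \oplus (y^* \oplus x^{**})^* = x \oplus (y \odot x^*)$. Hence $y = x \oplus (y \odot x^*)$, which is~(c).

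Finally, (c)~$\Rightarrow$~(d) is immediate by taking $z = y \odot x^*$, and (d)~$\Rightarrow$~(a) follows from $x^* \oplus y = x^* \oplus (x \oplus z) = (x \oplus x^*) \oplus z = \u \oplus z = \u$, using commutativity and associativity of $\oplus$, the identity $x \oplus x^* = \u$, and $\u \oplus z = z \oplus \u = \u$. The only place that requires any insight is recognizing that~(MV3), with the substitution that makes $(x^* \oplus y)^*$ equal to $\z$, is precisely the identity producing~(c); the remaining obstacle is merely the clerical one of keeping track of the De Morgan conversions between $\oplus$ and $\odot$ and the placement of the involutions, which the identity $y \odot x^* = (y^* \oplus x)^*$ handles cleanly.
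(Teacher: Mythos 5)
Your proof is correct. The paper itself does not prove this lemma --- it is quoted from \cite{mvbook} --- and your argument is essentially the standard one given there: the equivalence (a)$\Leftrightarrow$(b) by De Morgan and involutivity, the cycle (a)$\Rightarrow$(c)$\Rightarrow$(d)$\Rightarrow$(a) with (MV3) instantiated so that $(x^*\oplus y)^*=\u^*=\z$ collapses the left-hand side, and the closing step using $x\oplus x^*=\u$ and $x\oplus\u=\u$, both of which are among the derived facts the paper lists after Definition~\ref{mvalg} and are established prior to this lemma in the cited source, so no circularity arises.
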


For any MV-algebra $A$ and $x, y \in A$, we write $x \leq y$ if and only if $x$ and $y$ satisfy the equivalent conditions of Lemma \ref{order}. It is well-known that $\leq$ is a partial order on $A$, called the \emph{natural order} of $A$. Moreover, the natural order determines a structure of bounded distributive lattice on $A$ \cite[Propositions 1.1.5 and 1.5.1]{mvbook}, with $\z$ and $\u$ respectively bottom and top element, and $\vee$ and $\wedge$ defined by
\begin{eqnarray*}
&& x \vee y = (x \odot y^*) \oplus y, \\ 
&& x \wedge y = (x^* \vee y^*)^* = x \odot (x^* \oplus y). 
\end{eqnarray*}

A subset $I$ of an MV-algebra $A$ is called an \emph{ideal} if it is a downward closed submonoid of $\la A, \oplus, \z\ra$, i.~e. if it satisfies the following properties:
\begin{itemize}
\item $\z \in I$;
\item $I$ is downward closed, that is, $b \leq a$ implies $b \in I$ for all $a \in I$ and $b \in A$;
\item $a \oplus b \in I$ for all $a, b \in I$.
\end{itemize}

The following proposition shows that MV-algebraic congruences are in one-one correspondence with MV-ideals.

\begin{proposition}\emph{\cite[Proposition 1.2.6]{mvbook}}\label{ideal}
Let $I$ be an ideal of an MV-algebra $A$. Then the binary relation $\sim_I$ defined by ``$a \sim_I b$ iff $d(a,b) \bydef (a \odot b^*) \oplus (b \odot a^*) \in I$'' is a congruence on $A$, and $[\z]_{\sim_I} = I$.

Conversely, if $\sim$ is a congruence on $A$, then $[\z]_\sim$ is an ideal and $a \sim b$ iff $d(a,b)  \in [\z]_\sim$.
\end{proposition}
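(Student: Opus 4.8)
The plan is to route everything through the \emph{distance} term $d(x,y) \bydef (x \odot y^*) \oplus (y \odot x^*)$, which plays the role of an $A$-valued metric. First I would collect its elementary properties: $d(x,y) = d(y,x)$ and $d(x^*,y^*) = d(x,y)$ (both immediate from commutativity of $\oplus$ and $\odot$ together with (MV1)); $d(x,\z) = x$ (since $x \odot \z^* = x \odot \u = x$ and $\z \odot x^* = \z$); and $d(x,y) = \z$ if and only if $x = y$ --- for the nontrivial direction, $a \oplus b = \z$ forces $a = b = \z$ because $a \leq a \oplus b$ by Lemma~\ref{order}(d), so $x \odot y^* = \z = y \odot x^*$, and then Lemma~\ref{order} yields $x \leq y$ and $y \leq x$. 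The two genuinely computational facts are the triangle inequality $d(x,z) \leq d(x,y) \oplus d(y,z)$ and the sum estimate $d(x \oplus x', y \oplus y') \leq d(x,y) \oplus d(x',y')$; these are obtained by direct MV-algebraic manipulation, and this is where the real work lies.

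Granting those, the first assertion is quick. For $\sim_I$: reflexivity is $d(a,a) = \z \in I$; symmetry is $d(a,b) = d(b,a)$; transitivity holds because $d(a,c) \leq d(a,b) \oplus d(b,c)$, the right-hand side lies in $I$ as $I$ is a submonoid, and $I$ is downward closed. Compatibility with $\oplus$ follows from $d(a \oplus c, b \oplus d) \leq d(a,b) \oplus d(c,d) \in I$, and compatibility with $^*$ from $d(a^*,b^*) = d(a,b) \in I$. So $\sim_I$ is a congruence, and $[\z]_{\sim_I} = \{a \mid d(a,\z) \in I\} = \{a \mid a \in I\} = I$ by the identity $d(a,\z) = a$.

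For the converse, put $I = [\z]_\sim$ for a congruence $\sim$. Then $\z \in I$ by reflexivity; $a, b \in I$ gives $a \oplus b \sim \z \oplus \z = \z$; and if $a \in I$ and $b \leq a$, then, $\wedge$ being term-definable, $b = b \wedge a \sim b \wedge \z = \z$, so $b \in I$, and hence $I$ is an ideal. Finally, to recover $\sim$: if $a \sim b$ then $a^* \sim b^*$, hence $a \odot b^* \sim a \odot a^* = \z$ and $b \odot a^* \sim \z$, so $d(a,b) \sim \z$, i.e. $d(a,b) \in I$; conversely, letting $q : A \to A/{\sim}$ be the quotient homomorphism (so $q(d(a,b)) = d(q(a),q(b))$), the hypothesis $d(a,b) \in I$ says $d(q(a),q(b)) = \z$ in $A/{\sim}$, whence $q(a) = q(b)$ by the ``$d(x,y) = \z \Rightarrow x = y$'' fact applied in the quotient algebra --- that is, $a \sim b$. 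The only obstacle is the pair of inequalities for $d$ flagged above; everything else is purely formal.
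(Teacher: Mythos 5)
The paper offers no proof of this statement: it is quoted directly from \cite[Proposition~1.2.6]{mvbook}, and your argument follows essentially the same route as that reference (which first establishes the properties of the ``distance'' $d$ as a separate proposition and then derives the correspondence exactly as you do). Every step you actually carry out is correct: the elementary identities for $d$, the separation property $d(x,y)=\z \Rightarrow x=y$ via Lemma~\ref{order}, the verification that $\sim_I$ is a congruence with $[\z]_{\sim_I}=I$, the use of term-definability of $\wedge$ and $\odot$ for the converse, and in particular the clean trick of proving $d(a,b)\in[\z]_\sim \Rightarrow a\sim b$ by applying the separation property inside the quotient MV-algebra $A/{\sim}$.

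The only incomplete point is the one you flag yourself: the two inequalities $d(x,z) \leq d(x,y) \oplus d(y,z)$ and $d(x \oplus x', y \oplus y') \leq d(x,y) \oplus d(x',y')$ are asserted but not proved, and they are indeed where all the computation sits. As written, the proof is therefore a correct reduction to an unproved lemma rather than a complete argument. Within this paper the gap can be closed with no MV-algebraic manipulation at all: since $a \leq b$ is the equation $a \vee b = b$ between terms in the MV-language, Theorem~\ref{stand} reduces both inequalities to the standard algebra $[0,1]$, where $x \odot y^* = \max\{0,x-y\}$ gives $d(x,y) = |x-y|$ and both statements become the ordinary triangle inequality together with the $1$-Lipschitz property of the truncated sum $s \mapsto \min\{1,s\}$. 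Either supply that one-line completeness argument or the direct equational computation; with that addition the proof is complete.
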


In the light of Proposition \ref{ideal}, for any MV-algebra $A$ and any ideal $I$ of $A$, we shall denote by $I$ also the congruence $\sim_I$, by $A/I$ the corresponding quotient MV-algebra and by $a/I$ the congruence class of any given element $a \in A$.

\begin{exm}
Consider the interval $[0,1]$ of $\R$ with the operations $\oplus$ and $^*$ defined, respectively, by $x \oplus y \bydef \min\{x + y,1\}$ and $x^* \bydef 1-x$. Then structure $\la [0,1], \oplus, ^*, 0\ra$ is an MV-algebra, often called the \emph{standard MV-algebra}. The reason for such a name is the fact (which is perfectly equivalent to Theorem \ref{stand} below) that the algebra $[0,1]$ generates the whole variety of MV-algebras, namely, every MV-algebra can be obtained as a quotient of a subalgebra of a Cartesian power $[0,1]^\kappa$ (with pointwise defined operations) for some cardinal $\kappa$.

In the standard MV-algebra the order relation (and therefore the lattice structure) is the usual one of real numbers; the product $\odot$ is defined by $x \odot y \bydef \max\{0, x+y-1\}$.
\end{exm}

\begin{theorem}\emph{\cite[Theorem 2.5.3]{mvbook}}\label{stand}
An equation holds in $[0,1]$ if and only if it holds in every MV-algebra.
\end{theorem}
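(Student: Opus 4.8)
The plan is to treat the two directions of the biconditional separately. The ``if'' direction I would dispose of in one line: $[0,1]$ is itself an MV-algebra, so every equation valid in all MV-algebras holds in particular in $[0,1]$. All the content is in the ``only if'' direction, and I would prove it by contraposition: assuming an equation $p \approx q$ in variables $x_1,\dots,x_m$ fails in some MV-algebra $A$, I would exhibit a witness of its failure inside the standard algebra $[0,1]$.

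The first move is a reduction to chains. Using the subdirect representation theorem for MV-algebras --- every MV-algebra embeds as a subdirect product of totally ordered ones, which follows from the fact that in any MV-algebra the intersection of all prime ideals is $\{\z\}$ and each quotient $A/P$ by a prime ideal $P$ is an MV-chain (see \cite{mvbook}) --- the failure of $p \approx q$ is inherited by some MV-chain $C$. Evaluating $p$ and $q$ at the finitely many witnessing elements and replacing $C$ by the MV-subalgebra they generate, I may assume $C$ is a \emph{finitely generated} MV-chain in which $p \approx q$ still fails.

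The heart of the argument is then to embed such a $C$ into an ultrapower of $[0,1]$. Via Mundici's functor \cite{mun} one has $C = \Gamma(G,u)$ for a totally ordered Abelian $\ell$-group $G$ with strong order unit $u$; finite generation of $C$ forces $G$ to be finitely generated as an $\ell$-group, hence, being torsion-free, $G \cong \Z^n$ as a group. A compatible total order on $\Z^n$ has a finite chain of convex subgroups with Archimedean quotients, so Hölder's theorem yields an ordered-group embedding of $G$ into $\R^n$ with the lexicographic order, sending $u$ to a positive tuple $\bar u$. Fixing a positive infinite element $\omega$ in an ultrapower $\R^{*}=\prod_{\mathcal U}\R$, the assignment $(a_1,\dots,a_n)\mapsto \bigl(\sum_i a_i\,\omega^{\,n-i}\bigr)\bigl/\bigl(\sum_i \bar u_i\,\omega^{\,n-i}\bigr)$ is an order-, hence $\ell$-group, embedding of lexicographic $\R^n$ into $(\R^{*},1)$ carrying $\bar u$ to $1$. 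Since the interval construction $x \mapsto [0,u]$ together with $\oplus,\odot,{}^{*}$ is uniformly first-order definable from $(+,\vee,\wedge,u)$, it commutes with ultraproducts, so the interval $\{x \in \R^{*}\mid 0 \le x \le 1\}$ with truncated operations is isomorphic to $\prod_{\mathcal U}[0,1]$; composing, I obtain an MV-embedding $C \hookrightarrow \prod_{\mathcal U}[0,1]$. Finally, embeddings reflect the failure of equations and an ultrapower of $[0,1]$ satisfies exactly the equations that $[0,1]$ does, so $p \approx q$ fails in $[0,1]$, contradicting the hypothesis. (Equivalently, this shows every MV-algebra lies in $\mathrm{HSP}(\{[0,1]\})$, so the theorem also drops out of Birkhoff's variety theorem.)

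The step I expect to be the main obstacle is the middle one: realizing the finitely generated totally ordered group $G$ inside a lexicographic power of $\R$ with the distinguished unit placed correctly, and verifying that $\Gamma$ genuinely transports ultraproducts --- that is, that the interval and the operations $\oplus,\odot,{}^{*}$ are definable with uniform parameters from the ordered-group signature. An alternative that sidesteps Mundici is Chang's original argument through ``good sequences'', which builds the enveloping $\ell$-group of $C$ by hand; the analytic crux, namely embedding an arbitrary MV-chain into a nonstandard extension of $[0,1]$, is the same under either presentation.
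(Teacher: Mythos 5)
The paper offers no proof of this statement: it is Chang's completeness theorem, imported wholesale from \cite[Theorem 2.5.3]{mvbook}, so there is no internal argument to compare yours against. Judged on its own, your outline is correct and follows a classical route: subdirect decomposition into MV-chains, passage to the enveloping totally ordered Abelian group with strong unit, Hahn--H\"older embedding of the finite-rank case into lexicographic $\R^n$, realization of the latter inside an ultrapower of $\R$, and transfer of equations back to $[0,1]$ via {\L}o\'s's theorem (equivalently, because equations are preserved under $\mathrm{HSP}$ and $[0,1]$ embeds diagonally in its own ultrapower). This is essentially Chang's original 1959 strategy in modern dress. It is worth noting that the proof actually given in \cite{mvbook} is different in its second half: it avoids ultraproducts entirely by a finitary argument (Lemmas 2.5.1--2.5.2 there) which, given the finitely many group elements and inequalities involved in the failed equation, perturbs the total order on $\Z^n$ into an embedding into $\R$ preserving just those inequalities, thereby producing a witness directly in $[0,1]$. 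Your version is conceptually cleaner but leans on ultrafilters; the book's is more elementary and effective.

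One step you should tighten is the claim that ``finite generation of $C$ forces $G$ to be finitely generated as an $\ell$-group, hence, being torsion-free, $G\cong\Z^n$ as a group.'' In general a finitely generated $\ell$-group need not be finitely generated as a group, and the Chang group of an infinite, finitely generated MV-chain is generated as a group by the (infinite) set $[0,u]$, not obviously by finitely many elements. What rescues the argument is the total order: the subgroup $H$ of $G$ generated by the MV-generators $a_1,\dots,a_m$ together with $u$ is already closed under the truncated operations, because in a chain $(x+y)\wedge u$ equals either $x+y$ or $u$ and $x^*=u-x$, so the MV-subalgebra $C$ sits inside $\Gamma(H,u)$ with $H$ finitely generated and torsion-free, hence $H\cong\Z^n$. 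With that repair, and the routine (but not entirely free) verification that the finite chain of convex subgroups of $H$ with Archimedean quotients assembles into an order embedding into lexicographic $\R^n$, your argument goes through.
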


\begin{exm}\label{0u}
Let $\la G, +, -, 0, \vee, \wedge \ra$ be a lattice-ordered Abelian group, let $u$ be a fixed positive element of $G$ and $[0,u] = \{x \in G \mid 0 \leq x \leq u\}$. Now let us define, for all $x, y \in [0,u]$, $x \oplus y \bydef (x + y) \wedge u$ and $x^* \bydef u - x$. Then it is easy to check that the structure $\la [0,u], \oplus, ^*, 0\ra$ is an MV-algebra.

So, according to Example \ref{l-group}, given an idempotent semifield $\la F, \wedge, +, -, \top, 0\ra$, for any fixed $u \in F$, with $0< u < \top$, it is possible to build the MV-algebra $\la [0,u], \oplus, ^*, 0\ra$ as above.
\end{exm}

\begin{exm}
For any Boolean algebra $\la B, \vee, \wedge, ', 0, 1\ra$, the structure $\la B, \vee, ', 0\ra$ is an MV-algebra. Boolean algebras form a subvariety of the variety of MV-algebras. They are precisely the MV-algebras satisfying the additional equation $x \oplus x = x$.
\end{exm}

For the proof of the following result we refer the reader to \cite[Proposition 3.6]{dng}.
\begin{proposition}\label{mvsemi}
Let $A$ be an MV-algebra. Then $A\vr = \la A, \vee, \odot, \z, \u \ra$ and $A\wr = \la A, \wedge, \oplus, \u, \z \ra$ are semirings. Moreover, the involution $^*: A \lto A$ is an isomorphism between them.
\end{proposition}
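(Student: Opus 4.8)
The plan is to verify, for each of the two structures $A\vr = \langle A, \vee, \odot, \z, \u\rangle$ and $A\wr = \langle A, \wedge, \oplus, \u, \z\rangle$, the axioms (S1)--(S4) of Definition~\ref{semiring}, and then exhibit the involution $^*$ as the isomorphism between them. Since these two tasks are symmetric --- indeed the last part of the statement asserts that $^*$ carries one structure onto the other --- I would do the verification in detail for $A\wr$ only and then deduce the claim for $A\vr$ by transport of structure along $^*$, using the identities $x^{**}=x$, $x\oplus y = (x^*\odot y^*)^*$, $(x\wedge y)^* = x^*\vee y^*$, and $\z^* = \u$, $\u^* = \z$ that are recorded just before Lemma~\ref{order}.

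For $A\wr$: axiom (S1) says $\langle A,\wedge,\u\rangle$ is a commutative monoid; this is exactly the lattice part of the structure theorem quoted after Lemma~\ref{order} (the natural order makes $A$ a bounded distributive lattice with top $\u$), so commutativity, associativity, and idempotency of $\wedge$ together with $x\wedge\u = x$ are all available. Axiom (S2) says $\langle A,\oplus,\z\rangle$ is a monoid --- this is built into Definition~\ref{mvalg} (it is even commutative). Axiom (S4) requires $\u\oplus x = \u = x\oplus\u$ for the additive zero $\u$ of this semiring; this is precisely the listed reformulation of (MV2), namely $x\oplus\u = \u$, together with commutativity of $\oplus$. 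The only genuine work is axiom (S3): the distributivity of $\oplus$ over $\wedge$ from both sides, i.e.
\begin{equation*}
x\oplus(y\wedge z) = (x\oplus y)\wedge(x\oplus z),\qquad (y\wedge z)\oplus x = (y\oplus x)\wedge(z\oplus x).
\end{equation*}
By commutativity of $\oplus$ the two are equivalent, so it suffices to prove the first. This is a known MV-algebraic identity; I would derive it from the definition $x\wedge y = x\odot(x^*\oplus y)$, or alternatively cite it from \cite{mvbook}, or --- cleanest of all --- invoke Theorem~\ref{stand} and check $x\oplus(y\wedge z) = (x\oplus y)\wedge(x\oplus z)$ in the standard MV-algebra $[0,1]$, where $\oplus$ is truncated addition and $\wedge$ is $\min$; there it is the elementary fact that $a\mapsto\min\{a+x,1\}$ is order-preserving, hence commutes with $\min$. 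I expect this distributivity step to be the only real obstacle, and Theorem~\ref{stand} disposes of it with essentially no computation.

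Finally, for the isomorphism claim: $^*: A\to A$ is a bijection with $x^{**}=x$. Using $(x\wedge y)^* = x^*\vee y^*$ we get that $^*$ sends $\wedge$ to $\vee$; using $x\oplus y = (x^*\odot y^*)^*$, equivalently $(x\oplus y)^* = x^*\odot y^*$, we get that $^*$ sends $\oplus$ to $\odot$; and $\u^* = \z$, $\z^* = \u$ match the constants of $A\wr$ with those of $A\vr$ in the correct slots. Hence $^*: A\wr \to A\vr$ is a semiring isomorphism, and composing with $^{-1} = {}^*$ shows $A\vr$ is a semiring as well, completing the proof without any further direct verification. (This is exactly the route of \cite[Proposition 3.6]{dng} cited in the statement; the write-up would either reproduce the short argument above or simply refer to that source.)
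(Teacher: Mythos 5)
Your proposal is correct. Note that the paper does not actually supply a proof of this proposition --- it defers entirely to \cite[Proposition 3.6]{dng} --- so there is no in-text argument to compare against; judged on its own, your write-up is a complete and self-contained verification. The decomposition is sound: (S1) for $A\wr$ is the bounded-distributive-lattice structure with top $\u$, (S2) is the monoid axiom in Definition~\ref{mvalg}, (S4) is exactly the reformulation $x \oplus \u = \u$ of (MV2), and the only substantive point is the distributivity $x\oplus(y\wedge z) = (x\oplus y)\wedge(x\oplus z)$. Your use of Theorem~\ref{stand} here is legitimate --- $\wedge$ is a derived term of the MV-language, so this is an MV-equation, and in the totally ordered algebra $[0,1]$ it reduces to the observation that the monotone map $a\mapsto\min\{a+x,1\}$ commutes with $\min$. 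The transport-of-structure step is also airtight: $(x\wedge y)^* = x^*\vee y^*$ and $(x\oplus y)^* = x^*\odot y^*$ follow from the definitions of $\wedge$ and $\odot$ together with (MV1), and $^*$ matches the constants $\u \mapsto \z$, $\z \mapsto \u$ in the correct slots, so $A\vr$ inherits the semiring axioms from $A\wr$ and $^*$ is simultaneously exhibited as the claimed isomorphism. The only stylistic caveat is that an appeal to Theorem~\ref{stand} uses Chang's completeness theorem, which is a heavier tool than the elementary identity-chasing one would find in \cite{dng} or \cite{mvbook}; a purely equational derivation of the distributive law from $x\wedge y = x\odot(x^*\oplus y)$ is available if one prefers to keep the proof at the level of the axioms, but nothing in the paper requires that.
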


\begin{remark}\label{star}
Thanks to Proposition~\ref{mvsemi}, we can limit our attention to one of the two \emph{semiring reducts of $A$}; therefore, whenever not differently specified, we will refer only to $A\vr$, all the results holding also for $A\wr$ up to the application of $^*$.
\end{remark}

We recall the following definition from \cite{bdn}.
\begin{definition}\label{luksemi}
An \emph{MV-semiring} is a commutative, additively idempotent semiring $\la A, \vee, \cdot, 0, 1 \ra$ for which there exists a map $^*: A \lto A$ --- called the \emph{negation} --- satisfying, for all $a, b \in A$, the following conditions:
\begin{enumerate}[(i)]
\item $a \cdot b = 0$ iff $b \leq a^*$ (where $a \leq b$ iff $a \vee b = b$);
\item $a \vee b = (a^* \cdot (a^* \cdot b)^*)^*$.
\end{enumerate}
\end{definition}

The following result is basically a reformulation of \cite[Proposition 2.2]{bdn}.
\begin{proposition}\label{luksemiprop}
For any MV-algebra $\la A, \oplus, \z\ra$, both the semiring reducts $A\vr$ and $A\wr$ are MV-semirings. Conversely, if $\la A, \vee, \cdot, 0, 1\ra$ is an MV-semiring with negation $^*$, the structure $\la A, \oplus, ^*, 0 \ra$, with
\begin{equation*}
a \oplus b = (a^* \cdot b^*)^* \quad \textrm{for all $a, b \in S$},
\end{equation*}
is an MV-algebra.
\end{proposition}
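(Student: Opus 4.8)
The plan is to prove Proposition~\ref{luksemiprop} in two directions, both essentially by translating between the MV-algebraic axioms of Definition~\ref{mvalg} and the semiring-with-negation axioms of Definition~\ref{luksemi}, using the dictionary that already exists in the excerpt: on any MV-algebra, $x \odot y = (x^* \oplus y^*)^*$, $x \vee y = (x \odot y^*) \oplus y$, and $x \wedge y = x \odot (x^* \oplus y)$, together with the lattice structure of the natural order (bounded distributive lattice, bottom $\z$, top $\u$) and Lemma~\ref{order}.

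For the forward direction, I would start from an MV-algebra $\la A,\oplus,{}^*,\z\ra$ and invoke Proposition~\ref{mvsemi} to get that $A\vr = \la A,\vee,\odot,\z,\u\ra$ is a commutative, additively idempotent semiring (additive idempotency is exactly $x \vee x = x$). It remains to check that the MV-involution $^*$ serves as a negation in the sense of Definition~\ref{luksemi}. Condition (i) says $a \odot b = \z$ iff $b \leq a^*$; but by Lemma~\ref{order}, $a \odot b = \z$ is equivalent to $(a)\odot (b^{**}) = \z$, i.e. to $b^* \oplus a^* = \u$ after relabelling, which is precisely $b \leq a^*$ in the natural order — so (i) is immediate from Lemma~\ref{order}. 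Condition (ii) asks that $a \vee b = (a^* \odot (a^* \odot b)^*)^*$; unwinding the right-hand side using $x \odot y = (x^*\oplus y^*)^*$ and $(x^*)^* = x$ turns it into $a \oplus ((a^* \odot b)^* )^{**}$... more carefully, $(a^* \odot (a^* \odot b)^*)^* = a \oplus (a^* \odot b)$, and by the identity $a \oplus (b \odot a^*) = a \vee b$ (clause (c) of Lemma~\ref{order} together with the definition of $\vee$), this equals $a \vee b$. So (ii) holds. The same argument applied to $A\wr$ via Remark~\ref{star} (or by direct dualization) gives that $A\wr$ is an MV-semiring as well.

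For the converse, I start from an MV-semiring $\la A,\vee,\cdot,0,1\ra$ with negation $^*$, define $a \oplus b = (a^* \cdot b^*)^*$, and must verify the three axioms (MV1)–(MV3) of Definition~\ref{mvalg} plus that $\la A,\oplus,0\ra$ is a commutative monoid. This is the direction where real work is needed, and it is the part the authors defer to \cite[Proposition 2.2]{bdn}. The strategy is: first establish that $^*$ is an involution ($a^{**}=a$) — this should follow from axiom (ii) by a suitable substitution (e.g. taking $b$ extremal, using that $0$ and $1$ behave as bottom and top of the $\vee$-semilattice, so $a \vee 1 = 1$, $a\vee 0 = a$, and $a \cdot 1 = a$, $a \cdot 0 = 0$) combined with axiom (i) to pin down $0^* = 1$ and $1^* = 0$; then commutativity of $\oplus$ is immediate from commutativity of $\cdot$, associativity of $\oplus$ follows from associativity of $\cdot$ and involutivity, and $a \oplus 0 = (a^* \cdot 0^*)^* = (a^* \cdot 1)^* = a^{**} = a$ gives the identity; (MV1) is involutivity; (MV2) is $a \oplus 1 = (a^*\cdot 1^*)^* = (a^* \cdot 0)^* = 0^* = 1$; and (MV3), the characteristic MV-axiom $(x^*\oplus y)^*\oplus y = (y^*\oplus x)^*\oplus x$, is the genuinely delicate one — it amounts, after translation through $\oplus = (\cdot)^*$-conjugated, to showing that the expression $(x^*\oplus y)^*\oplus y$ computes the join $x \vee y$ (using axiom (ii) to rewrite $\vee$ in terms of $\cdot$ and $^*$), which is visibly symmetric in $x,y$.

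The main obstacle I anticipate is exactly axiom (MV3) in the converse direction: one has to manipulate nested expressions of the form $(a^*\cdot b^*)^*$ and show they collapse to the semilattice join, and the only tools available are axioms (i) and (ii) together with the semiring identities. Concretely, the crux is the identity $(x^* \oplus y)^* \oplus y = x \vee y$ in the derived structure, and proving it rigorously means carefully applying axiom (ii) — which expresses $\vee$ as $(a^* \cdot (a^* \cdot b)^*)^*$ — in the right orientation and checking that the two sides of (MV3) both reduce to this common value. Since the excerpt explicitly says this proposition ``is basically a reformulation of \cite[Proposition 2.2]{bdn}'', I would present the forward direction in full (it is short, as sketched above) and for the converse either carry out the (MV1), (MV2), monoid, and involutivity steps and then reduce (MV3) to the join-identity, citing \cite{bdn} for the remaining verification, or give the join-identity computation explicitly if space permits.
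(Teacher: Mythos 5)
The paper offers no proof of this proposition at all: the sentence immediately preceding it defers entirely to \cite[Proposition 2.2]{bdn}, so your proposal is being compared against a citation rather than an argument in the text. What you outline is correct and essentially supplies the missing verification. In the forward direction, condition (i) of Definition~\ref{luksemi} is exactly the equivalence of (a) and (b) in Lemma~\ref{order}, and your computation $(a^*\odot(a^*\odot b)^*)^*=a\oplus(a^*\odot b)$ is right; the final identification with $a\vee b$ is justified by the displayed formula $x\vee y=(x\odot y^*)\oplus y$ taken with $x=b$, $y=a$, together with commutativity of $\vee$ --- clause (c) of Lemma~\ref{order} is a conditional identity valid when $x\leq y$ and is not quite the fact you need. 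In the converse direction, the ``crux'' you isolate is in fact painless: once $a^{**}=a$ is known, $(x^*\oplus y)^*=x\cdot y^*$, so the left side of (MV3) becomes $((x\cdot y^*)^*\cdot y^*)^*=(y^*\cdot(y^*\cdot x)^*)^*$, which is literally the right-hand side of axiom (ii) computing $y\vee x$; both sides of (MV3) therefore equal $x\vee y$, and the axiom reduces to commutativity of the semiring addition with no further manipulation. The one step you gloss over is your use of ``$0$ and $1$ behave as bottom and top of the $\vee$-semilattice'': that $1$ is the top is not an axiom of Definition~\ref{luksemi} and must be derived. It follows by first getting $1^*=0$ from axiom (i) (since $1\cdot b=b$, one has $b=0$ iff $b\leq 1^*$, and $b=1^*$ forces $1^*=0$), then applying axiom (ii) with $a=1$ to get $1\vee b=(0\cdot(0\cdot b)^*)^*=0^*$ for every $b$; taking $b=0$ gives $1=0^*$, hence $1\vee b=1$ for all $b$, and then $a=a\vee 0=(a^*\cdot 0^*)^*=(a^*\cdot 1)^*=a^{**}$ yields involutivity. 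With that inserted, the rest of your converse (commutativity, associativity, identity, (MV1), (MV2)) goes through exactly as you wrote it, and no appeal to \cite{bdn} is actually needed.
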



We now start considering semimodules over MV-semirings (\emph{MV-semimodules} for short). Recalling that a semilattice is a commutative idempotent semigroup, we observe that, if $\la M, +, 0\ra$ is a semimodule over an additively idempotent semiring (hence, in particular, over an MV-semiring), then it is necessarily a semilattice with neutral element $0$. Indeed the sum, besides being commutative by definition, is also idempotent since $x = 1x = (1 \vee 1) x = 1x + 1x = x + x$. In what follows, for an MV-algebra $A$, we will use the notation of join-semilattices $\la M, \vee, \bot \ra$ for the $A\vr$-semimodules and the one of meet-semilattices $\la M, \wedge, \top\ra$ for the $A\wr$-semimodules. We also recall that we denote by $\sL$ the category whose objects are idempotent monoids or, that is the same, semilattices with identity, and morphisms are monoid homomorphisms; whenever we refer to an order relation in such structures it is understood that such an order is the one naturally induced by the semilattice operation.

We shall now enforce the characterization of cyclic projective semimodules in the case of MV-algebras. For the role that the idempotent elements of MV-algebras play in this context, we briefly recall some basic facts about them \cite[Section 1.5]{mvbook}, most of which are very easy to check.
\begin{itemize}
\item An element $a$ of an MV-algebra $A$ is called \emph{idempotent} or \emph{Boolean} if $a \oplus a = a$.
\item For any $a \in A$, $a \oplus a = a$ iff $a \odot a = a$.
\item An element $a$ is Boolean iff $a^*$ is Boolean.
\item If $a$ and $b$ are idempotent, then $a \oplus b$ and $a \odot b$ are idempotent as well; moreover we have $a \oplus b = a \vee b$, $a \odot b = a \wedge b$, $a \vee a^* = \u$ and $a \wedge a^* = \z$.
\item The set $\B(A) = \{a \in A \mid a \oplus a = a\}$ is a Boolean algebra, usually called the \emph{Boolean center} of the MV-algebra $A$.
\item For any $a \in A$ and $u \in \B(A)$, $a = (a \oplus u) \wedge (a \oplus u^*) = (a \odot u) \vee (a \odot u^*)$. 
\end{itemize}

\begin{proposition}\label{cyclicmv}
Let $A$ be an MV-algebra and $M$ a cyclic $A$-semimodule. Then the following are equivalent:
\begin{enumerate}
\item[$(a)$] $M$ is projective,
\item[$(b)$] there exists $u \in A$ such that $M \cong A \odot u$ and $u \odot u = u$,
\item[$(c)$] $M$ is a direct summand of the free semimodule $A$, i.~e. there exists a subsemimodule $N$ of $A$ such that $A \cong M \oplus N$.
\end{enumerate}
\end{proposition}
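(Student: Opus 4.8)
The plan is to prove the cycle of implications $(a) \Rightarrow (b) \Rightarrow (c) \Rightarrow (a)$, using Remark~\ref{cyclic} as the bridge between semiring-theoretic and MV-algebraic language. Throughout I would exploit Remark~\ref{star}: working with the semiring reduct $A\vr = \la A, \vee, \odot, \z, \u\ra$, the scalar action of $a$ on a generator $v$ is $a \odot v$, so a cyclic $A\vr$-semimodule is of the form $A \odot v$.

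For $(a) \Rightarrow (b)$: if $M$ is cyclic and projective, Remark~\ref{cyclic} gives an element $u \in A$ with $M \cong A \cdot u = A \odot u$ and $u \odot u = u$ in the semiring $A\vr$ — but $u \odot u = u$ is precisely the MV-algebraic statement that $u$ is idempotent (equivalently Boolean), so $(b)$ holds verbatim. The implication $(b) \Rightarrow (a)$ is the converse half of Remark~\ref{cyclic}, again just reading $\odot$ as the multiplication of $A\vr$. The only genuinely new content of the proposition is therefore the equivalence of $(c)$ with these, i.e. that cyclic projectivity over an \emph{MV}-algebra is the same as being a direct summand — this is the point the authors stress does \emph{not} hold for general idempotent semirings.

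For $(b) \Rightarrow (c)$: given a Boolean element $u$, I would set $M = A \odot u$ and propose $N = A \odot u^*$ as the complementary summand (note $u^*$ is Boolean when $u$ is), and show $A \cong (A \odot u) \oplus (A \odot u^*)$ as $A\vr$-semimodules. The key identity is the last bulleted fact recalled just before the proposition: for every $a \in A$ and $u \in \B(A)$, $a = (a \odot u) \vee (a \odot u^*)$; this shows the map $a \mapsto (a \odot u, a \odot u^*)$ is surjective onto the coproduct, and it is manifestly an $A\vr$-semimodule homomorphism. For injectivity one would check that $a \odot u$ and $a \odot u^*$ together determine $a$ — which is exactly that same identity read backwards — so that the two "coordinates" recover $a$ uniquely; here one uses that $u \wedge u^* = \z$ and $u \vee u^* = \u$, and that for Boolean $u$ multiplication by $u$ is a retraction ($(a\odot u)\odot u = a \odot u$). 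One must also verify that the internal sum $M + N$ is direct in the semimodule sense, i.e. $M \cap N = \{\bot\}$: if $x \le u$ and $x \le u^*$ then $x \le u \wedge u^* = \z$, so $x = \z = \bot$.

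For $(c) \Rightarrow (a)$: this is immediate and category-theoretic — if $A \cong M \oplus N$ then $M$ is a retract of the free cyclic semimodule $A$ (the projection $A \to M$ splits via the inclusion), hence projective by Proposition~\ref{projgol}. I expect the main obstacle to be the verification in $(b) \Rightarrow (c)$ that the candidate decomposition is genuinely a biproduct/direct sum in $A\Mod$ rather than merely an internal sum of subsemimodules: one has to be careful that in the idempotent (semilattice) setting "direct summand" is the right notion, and this is precisely where Booleanness of $u$ — as opposed to a general multiplicatively idempotent semiring element — is essential, since it is what makes $u$ and $u^*$ orthogonal complements. The rest reduces to the elementary facts about $\B(A)$ recalled before the statement.
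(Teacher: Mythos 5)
Your proof is correct and uses the same decomposition as the paper ($N = A \odot u^*$, with the identity $a = (a \odot u) \vee (a \odot u^*)$ as the engine), but the verification of $(b)\Rightarrow(c)$ runs differently. The paper never forms the map $a \mapsto (a \odot u, a \odot u^*)$; instead it checks the coproduct universal property directly: for any sink $f: M \to P$, $g: N \to P$ it defines $h(a) = f(a \odot u) \vee g(a \odot u^*)$ and verifies $h \circ i = f$, $h \circ i^* = g$ and uniqueness, so that $A$ \emph{is} the coproduct $M \oplus N$ with the inclusions as coprojections. Your route — exhibiting an isomorphism of $A$ with the cartesian product $M \times N$ — also works, since finite products and coproducts coincide in $S\Mod$ (the hom-sets are commutative monoids), but two points in your sketch need tightening. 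First, the identity $a = (a\odot u) \vee (a\odot u^*)$ by itself only shows that $(m,n) \mapsto m \vee n$ is \emph{onto} $A$; to get surjectivity of $a \mapsto (a\odot u, a\odot u^*)$ onto all of $M \times N$ you must additionally compute, for $m \in A\odot u$ and $n \in A\odot u^*$, that $(m \vee n)\odot u = (m\odot u) \vee (n\odot u) = m \vee \z = m$, using both $m \odot u = m$ and the orthogonality $u^* \odot u = \z$ — you list these facts but attribute surjectivity to the wrong identity. Second, the check $M \cap N = \{\bot\}$ is neither necessary nor sufficient here: in semimodules there is no subtraction, so trivial intersection does not make an internal sum direct; your bijectivity argument is what actually carries the proof, and the intersection remark should be dropped rather than leaned on. The $(a)\Leftrightarrow(b)$ and $(c)\Rightarrow(a)$ steps match the paper exactly.
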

\begin{proof}
The equivalence between $(a)$ and $(b)$ is already known (Remark~\ref{cyclic}) and holds for cyclic semimodules over any semiring.

Let us consider a cyclic projective semimodule $M = A \odot u$ with $u$ idempotent element of $A$ and let $N = A \odot u^*$. Then we have inclusion $A$-semimodule morphisms given by $i: M \lto A$ and $i^*: N \lto A$. Now given any $A$-semimodule $P$ with $A$-semimodule homomorphisms $f: M \lto P$ and $g: N \lto P$, we may define the map
$$\begin{array}{llll}
h: & A & \lto & P \\
 & a & \lmapsto & f(a \odot u) \vee g(a \odot u^*).
\end{array}$$
The fact that $h$ is an $A$-semimodule homomorphism is obvious. Now for any $b \in M$, we may write $b = a \odot u$ for some $a \in A$, so $h(i(b)) = h(b) = f(b \odot u) \vee g(b \odot u^*) = f(a \odot u \odot u) \vee g(a \odot u \odot u^*) = f(a \odot u) \vee g(\z) = f(b)$ and, analogously, $h(i^*(b)) = g(b)$ for any $b \in N$.

If $k: A \lto P$ is another morphism such that $k \circ i = f$ and $k \circ i^* = g$, for any $a \in A$, since $u$ is idempotent, $a$ can be decomposed as $(a \odot u) \vee (a \odot u^*)$, and we have:
$$k(a) = k((a \odot u) \vee (a \odot u^*)) = k(a \odot u) \vee k(a \odot u^*) = f(a \odot u) \vee g(a \odot u^*) = h(a).$$
So any sink with domain $\{M, N\}$ induces a unique morphism, with the same codomain and with domain $A$, such that the compositions of the inclusion maps with it coincide with the morphisms of the sink. In other words $A$ is the coproduct, i.~e. the direct sum, of $M$ and $N$.

Now assume that $(c)$ holds and consider the sink $\{\id_M: M \lto M, \bot: N \lto M\}$, $\bot$ being the $\bot$-constant morphism. Since $A \cong M \oplus N$, there exists a unique morphism $\pi: A \lto M$ extending such a sink. In particular $\pi \circ i = \id_M$, so $M$ is a retract of $A$, hence it is projective.
\end{proof}

\begin{remark}\label{0top}
It is worth to underline that, for the $\wedge$-$\oplus$ reduct $\la A, \wedge, \oplus, \u, \z\ra$ of an MV-algebra and for an idempotent semifield $\la F, \wedge, +, -, \top, 0\ra$, $\z$ and $0$ are the respective identities of the semiring product, the neutral elements for the semiring sum being, respectively, $\u$ and $\top$. So, for any set $X$ and functions $f: X \lto A$ and $g: X \lto F$, we have
\begin{eqnarray*}
\supp f & = & \{x \in X \mid f(x) \neq \u\}, \\ 
\supp g & = & \{x \in X \mid g(x) \neq \top\}. 
\end{eqnarray*}
Analogously, equation (\ref{chi}) becomes, respectively,
\begin{eqnarray}
\chi_x(y) & = & \left\{\begin{array}{ll} \u & \textrm{if } y \neq x \\ \z & \textrm{if } y = x\end{array}\right. \label{chimv}\\
\chi_x(y) & = & \left\{\begin{array}{ll} \top & \textrm{if } y \neq x \\ 0 & \textrm{if } y = x\end{array}\right.. \label{chisf}\\ \nonumber
\end{eqnarray}
\end{remark}

\section{Strong MV-semimodules}
\label{reprsec}

In this section we introduce the class of ``strong'' MV-semimodules and present some relevant examples. The defining property of such semimodules is basically a good behaviour of the scalar multiplication with respect to the MV-algebraic involution $^*$. Moreover, strong semimodules allow us to specialize Proposition~\ref{semiend} and Corollary \ref{semirepr} to MV-semirings as shown respectively in Proposition \ref{endmv} and Corollary \ref{mvrepr1}.

\begin{definition}\label{mvmod}
Let $A$ be an MV-semiring and $M$ an $A$-semimodule. $M$ is said to be a \emph{strong $A$-semimodule} provided it fulfils, for all $a, b \in A$, the following additional condition:
\begin{equation}\label{mvmodstar}
a \cdot x = b \cdot x \quad \textrm{for all } x \in M \qquad \textrm{implies} \qquad a^* \cdot x = b^* \cdot x \quad \textrm{for all } x \in M.
\end{equation}
\end{definition}

\begin{exm}\label{strong}
For any MV-algebra $A$, $\la A, \vee, \z \ra$ is a strong $A\vr$-semimodule as well as $\la A, \wedge, \u  \ra$ is a strong $A\wr$-semimodule. It is easy to see also that any free MV-semimodule is strong.
\end{exm}
\begin{exm}\label{nonstrong}
Let $A$ be the MV-algebra $[0,1]$ and consider the join-semilattice $M = \left\la\left[0,\frac{1}{2}\right], \vee, 0 \right\ra$ as an $A$-semimodule with $\odot$ as the scalar multiplication. For any $a \leq 1/2$, $a \odot x = 0 \odot x$ for all $x \in M$ but, if we set for example $a = x = 1/2$, $a^* \odot x = 0$ while $0^* \odot x = 1/2$. Hence $M$ is not a strong MV-semimodule.
\end{exm}

\begin{proposition}\label{endmv}
Let $A$ be an MV-semiring and $M$ a semilattice with neutral element. Then $M$ is a strong $A$-semimodule if and only if $\End_{\sL}(M)$ --- which in general is not an MV-semiring --- contains an MV-subsemiring that is homomorphic image of $A$ (in $\MV$).
\end{proposition}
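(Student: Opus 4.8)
The plan is to leverage Proposition~\ref{semiend} together with the term-equivalence between MV-semirings and MV-algebras recalled in Proposition~\ref{luksemiprop}. First I would observe that, since $M$ is a semilattice with neutral element, it is in particular a commutative idempotent monoid, so $\End_\sL(M)$ is exactly the semiring $\End_\M(M)$ of Example~\ref{end}, and an $A$-semimodule structure on $M$ is, by Proposition~\ref{semiend}, the same thing as a semiring homomorphism $A\to\End_\sL(M)$. I would then record the following bookkeeping facts about the MV-semiring $A=\la A,\vee,\cdot,0,1\ra$ and its associated MV-algebra $\hat A=\la A,\oplus,^*,0\ra$, all immediate from Proposition~\ref{luksemiprop} and the definitions: the negation of an MV-semiring is unique (it is $a^*=\max\{b\mid a\cdot b=0\}$), so it is preserved by every semiring isomorphism of MV-semirings; $\oplus$ is a term in $\cdot$ and $^*$, while $\cdot$ and $\vee$ are terms in $\oplus$ and $^*$; consequently a semiring congruence on $A$ is an MV-algebra congruence on $\hat A$ precisely when it is compatible with $^*$, and a semiring homomorphism between MV-semirings is an $\MV$-morphism precisely when it commutes with the negations.

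For the ``only if'' direction, assume $M$ is a strong $A$-semimodule and let $\xi\colon a\in A\lmapsto h_a\in\End_\sL(M)$, $h_a(x)=a\cdot x$, be the semiring homomorphism furnished by Proposition~\ref{semiend}. Put $B=\xi(A)$: it is a subsemiring of $\End_\sL(M)$, commutative because $A$ is (as $h_a\circ h_b=h_{ab}=h_{ba}=h_b\circ h_a$) and additively idempotent because $\End_\sL(M)$ is. The kernel congruence $\theta=\ker\xi=\{(a,b)\mid a\cdot x=b\cdot x\text{ for all }x\in M\}$ is a semiring congruence on $A$, and the defining implication~\eqref{mvmodstar} of strong semimodules says exactly that $\theta$ is compatible with $^*$; hence $\theta$ is an MV-algebra congruence, $\hat A/\theta$ is an MV-algebra, and its semiring reduct --- which, the reduct operations being terms in $\oplus,^*$, is the quotient semiring $A/\theta$ --- is an MV-semiring. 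Since $\xi$ induces a semiring isomorphism $A/\theta\cong B$, I conclude that $B$ is an MV-semiring, i.e.\ an MV-subsemiring of $\End_\sL(M)$; and since this isomorphism preserves negations and the projection $A\to A/\theta$ does too, $\xi$ is a surjective $\MV$-morphism, so $B$ is a homomorphic image of $A$ in $\MV$.

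For the ``if'' direction, suppose $B$ is a subsemiring of $\End_\sL(M)$ that is an MV-semiring and a homomorphic image of $A$ in $\MV$, witnessed by a surjective $\MV$-morphism $\phi\colon A\to B$. Composing $\phi$ with the inclusion $B\hookrightarrow\End_\sL(M)$ gives a semiring homomorphism $A\to\End_\sL(M)$, which by Proposition~\ref{semiend} endows $M$ with the $A$-semimodule structure $a\cdot x=\phi(a)(x)$. To verify~\eqref{mvmodstar}: if $a\cdot x=b\cdot x$ for all $x\in M$, then $\phi(a)=\phi(b)$ as elements of $\End_\sL(M)$, hence in $B$; since $\phi$ commutes with the negations, $\phi(a^*)=\phi(a)^*=\phi(b)^*=\phi(b^*)$, so $a^*\cdot x=\phi(a^*)(x)=\phi(b^*)(x)=b^*\cdot x$ for all $x\in M$. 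Thus $M$ is a strong $A$-semimodule.

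The main obstacle is the ``only if'' direction, and within it the single point that $B=\xi(A)$ is genuinely an MV-semiring and not merely a commutative additively idempotent semiring. I expect the cleanest route to be the one above --- recognising that strongness is \emph{precisely} compatibility of $\ker\xi$ with $^*$, so that the MV-semiring structure (and the negation) on $B$ comes for free from that on $A$ through the MV-algebra quotient $\hat A/\ker\xi$ --- rather than positing a negation on $B$ by hand and checking conditions (i)--(ii) of Definition~\ref{luksemi} directly, where verifying (i) would be awkward because the order on $B$ is the image order. Everything else (preservation of operations, surjectivity, and the whole converse) is routine.
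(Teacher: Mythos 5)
Your proof is correct, and its skeleton coincides with the paper's: both directions run through the homomorphism $\xi$ of Proposition~\ref{semiend}, the negation on the image $\xi[A]$ is defined in both cases by $\xi(a)^*\bydef\xi(a^*)$, and in both cases condition~(\ref{mvmodstar}) is identified as exactly the well-definedness of that map (equivalently, compatibility of $\ker\xi$ with $^*$); the converse is handled identically. Where you genuinely diverge is in certifying that $\xi[A]$, so equipped, is an MV-semiring: the paper verifies conditions (i) and (ii) of Definition~\ref{luksemi} directly on $\xi[A]$ (deducing (ii) from the fact that $\xi$ preserves all the operations involved, and (i) by computing $\xi(a)^*\vee\xi(b)=(\xi(a)\xi(ab)^*)^*=\xi(a)^*$ when $\xi(ab)=0$), whereas you route through the term-equivalence of Proposition~\ref{luksemiprop}: $\ker\xi$ is a semiring congruence compatible with $^*$, hence an MV-algebra congruence, so $\xi[A]\cong A/\ker\xi$ is the semiring reduct of a quotient MV-algebra. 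Your route buys you exactly what you say it does --- no need to manipulate the image order in axiom (i) --- at the cost of the (easy but unstated in the paper) observations that $\oplus$ and the reduct operations are mutually interdefinable terms and that congruences transfer accordingly; the paper's direct check is more self-contained. Your preliminary remark that the negation of an MV-semiring is uniquely determined by the semiring structure (as $a^*=\max\{b\mid a\cdot b=0\}$, from axiom (i)) is a genuine bonus not made explicit in the paper: it is what makes the phrases ``MV-subsemiring'' and ``homomorphic image in $\MV$'' in the statement unambiguous, and it lets you conclude for free that the semiring isomorphism $A/\ker\xi\cong\xi[A]$ and the surjection $\xi$ are $\MV$-morphisms.
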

\begin{proof}
By Proposition~\ref{semiend}, $M$ is an $A$-semimodule if and only if there exists a semiring homomorphism $\xi: A \lto \End_{\sL}(M)$.

Now, assume that $M$ is a strong $A$-semimodule and let us consider the semiring homomorphism $\xi: A \lto \End_{\sL}(M)$ defined in (\ref{xi}). Then $\xi[A]$ is a commutative subsemiring of $\End_{\sL}(M)$. The map $^*: \xi(a) \in \xi[A] \lmapsto \xi(a^*) \in \xi[A]$ is well-defined; indeed, if $\xi(a) = \xi(b)$, then $\xi(a^*) = \xi(b^*)$ by (\ref{mvmodstar}). Condition (ii) of Definition~\ref{luksemi} follows easily from the definition of $^*$ on $\xi[A]$ and from the fact that $\xi$ is a homomorphism. On the other hand, if $\xi(ab) = 0$ then, by (ii) of Definition~\ref{luksemi},
$$\xi(a)^* \vee \xi(b) = (\xi(a) \xi(ab)^*)^* = (\xi(a) \id_M)^* = \xi(a)^*,$$
i.~e. $\xi(a)\xi(b) = 0$ implies $\xi(b) \leq \xi(a)^*$, the other implication being obvious.

Conversely, if $\xi[A]$ is an MV-semiring and $\xi$ preserves the $^*$, then condition (\ref{mvmodstar}) is trivially verified for the action $\cdot: (a,x) \in A \times M \lmapsto (\xi(a))(x) \in M$ hence $M$ is a strong $A$-semimodule.
\end{proof}

The following result is an immediate consequence of Proposition \ref{endmv}, Example \ref{strong} and Corollary \ref{semirepr}.
\begin{corollary}\label{mvrepr1}
Any MV-semiring $A$ can be embedded in the endomorphism semiring $\End_{\sL}\left({A^\vee}^{(X)}\right)$ of a join-semilattice of functions with finite support from a non-empty set $X$ to $A$, with pointwise join.

In particular $A$ is embeddable in the semiring $\End_{\sL}(A^\vee)$ of the endomorphisms of its join-semilattice reduct.
\end{corollary}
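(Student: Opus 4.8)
The plan is to obtain Corollary~\ref{mvrepr1} by chaining together three ingredients that are all already available: Example~\ref{strong}, which tells us that the free MV-semimodules $\la A^\vee{}^{(X)}, \vee, \z\ra$ are strong $A\vr$-semimodules; Proposition~\ref{endmv}, which converts the strongness of a semimodule into the existence of an MV-subsemiring of $\End_\sL$ that is a homomorphic image of $A$; and Corollary~\ref{semirepr}, which supplies the injectivity we need. So the first step is to record that $A\vr$ acts on its own additive reduct $A^\vee$ (this is the free one-generated case of Example~\ref{funcsemimodule}/Proposition~\ref{freemod}, with $X$ a singleton), and more generally on every $A^\vee{}^{(X)}$, and that by Example~\ref{strong} each of these actions is strong.

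Next I would invoke Proposition~\ref{endmv} applied to $M = A^\vee{}^{(X)}$: since $M$ is a strong $A$-semimodule, the homomorphism $\xi\colon A \lto \End_\sL(M)$ of Proposition~\ref{semiend} (equation~(\ref{xi})) has image $\xi[A]$ which is an MV-subsemiring of $\End_\sL(M)$ and which is a homomorphic image of $A$ in $\MV$. The only remaining thing to upgrade ``homomorphic image'' to ``embedded copy'' is injectivity of $\xi$. For the one-generated case $M = A^\vee$, this is exactly the content of Corollary~\ref{semirepr}: $h_a(1) = a\cdot 1 = a$, so distinct scalars give distinct endomorphisms. For general non-empty $X$ one can either cite Corollary~\ref{semirepr} together with the fact that $A^\vee$ embeds (as a direct summand, via any choice of basis element $\chi_{x_0}$) into $A^\vee{}^{(X)}$ compatibly with the two $\xi$-actions, or argue directly that $h_a(\chi_{x_0}) = a\cdot\chi_{x_0}$ has $x_0$-component $a$, so again $a\neq b$ forces $h_a\neq h_b$. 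Either way $\xi$ is injective, hence an MV-algebra isomorphism of $A$ onto the MV-subsemiring $\xi[A] \leq \End_\sL\!\big(A^\vee{}^{(X)}\big)$, which is the first assertion; taking $X$ a singleton gives the ``in particular'' clause.

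**The main point requiring care** — and really the only non-bookkeeping step — is making sure the hypotheses of Proposition~\ref{endmv} are genuinely met, i.e. that $A^\vee{}^{(X)}$ really is a \emph{strong} $A\vr$-semimodule and not merely an $A\vr$-semimodule; this is asserted in Example~\ref{strong} (``any free MV-semimodule is strong''), so in the write-up I would simply point to that, perhaps recalling in one line why it holds: if $a\cdot f = b\cdot f$ for all $f$, then evaluating at the $\chi_x$'s forces $a = b$ (the free semimodule is faithful), and then trivially $a^*\cdot f = b^*\cdot f$. With faithfulness in hand, condition~(\ref{mvmodstar}) is vacuous for free semimodules, which is why the free case behaves so well. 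So the proof is genuinely short: the substance has been front-loaded into Proposition~\ref{endmv}, Example~\ref{strong}, and Corollary~\ref{semirepr}, and Corollary~\ref{mvrepr1} is their formal combination, with the pleasant observation — worth a remark — that the commutative operations $\odot$ (the semiring product of $A\vr$) and, dually, $\oplus$ get represented as composition of semilattice endomorphisms.
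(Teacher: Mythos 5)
Your proof is correct and follows exactly the route the paper takes: the paper itself states that Corollary~\ref{mvrepr1} is ``an immediate consequence of Proposition~\ref{endmv}, Example~\ref{strong} and Corollary~\ref{semirepr},'' and your write-up simply makes that combination explicit, including the right way to get injectivity of $\xi$ in the general case by evaluating at a basis element $\chi_{x_0}$. No gaps; your added one-line justification of why free MV-semimodules are strong (faithfulness of the free action makes condition~(\ref{mvmodstar}) vacuous) is a sensible clarification of Example~\ref{strong}.
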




%

The proof of the following proposition is trivial.
\begin{proposition}\label{}
The following hold for any MV-algebra $A$.
\begin{enumerate}[(i)]
\item For any MV-ideal $I$, the semilattice reduct of the quotient MV-algebra $A/I$ is a strong semimodule over $A$ with
$$\begin{array}{cccc}
\cdot: & A \times A/I & \lto & A/I \\
			 & (a, x/I) & \lmapsto & (a \odot x)/I.
\end{array}$$
\item If $B$ is an MV-algebra and $h \in \hom_{\MV}(A, B)$, every strong $B$-semimodule $N$ is a strong $A$-semimodule with
$$\begin{array}{cccc}
\cdot_A: & A \times N & \lto & N \\
			 & (a, x) & \lmapsto & h(a) \cdot_B x.
\end{array}$$
\end{enumerate}
\end{proposition}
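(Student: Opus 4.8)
The plan is to derive both parts from the general principle, noted in the proof of Proposition~\ref{semiend}, that a semiring homomorphism $h\colon S\to T$ turns any $T$-semimodule into an $S$-semimodule via $a\cdot_h x = h(a)\cdot x$; the only genuinely new content is the verification of the strongness condition~(\ref{mvmodstar}) in each case.

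For part~(i): I would first recall that, by Proposition~\ref{ideal}, $I$ is a congruence compatible with $\oplus$ and $^*$, hence with the derived operations $\odot$ and $\vee$ as well, so the canonical surjection $q\colon a\mapsto a/I$ is in particular a homomorphism of the semiring reducts $A\vr\to(A/I)\vr$. Since $(A/I)\vr$ acts on its own semilattice reduct $\la A/I,\vee,\z/I\ra$ by $\odot$ (that is the free cyclic semimodule over itself), pulling this action back along $q$ gives exactly the prescribed map $(a,x/I)\mapsto(a\odot x)/I$, which is therefore a well-defined $A$-semimodule structure; in particular well-definedness is precisely the congruence property of $I$. It then remains to check~(\ref{mvmodstar}). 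If $a\cdot(x/I)=b\cdot(x/I)$ for every $x/I\in A/I$, then evaluating at $x=\u$ and using $a\odot\u=a$ forces $a/I=b/I$, i.e.\ $a\sim_I b$; applying $^*$ yields $a^*\sim_I b^*$, hence $a^*\odot x\sim_I b^*\odot x$ for all $x$, that is $a^*\cdot(x/I)=b^*\cdot(x/I)$ for all $x/I$. So $A/I$ is strong over $A$.

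For part~(ii): I would observe that an MV-homomorphism $h\colon A\to B$ preserves $\oplus$, $^*$ and $\z$, hence also $\odot$, $\vee$ and $\u$, so it is a homomorphism of semirings $A\vr\to B\vr$. By the principle above, any (strong) $B$-semimodule $N$ becomes an $A$-semimodule under $a\cdot_A x = h(a)\cdot_B x$. To see that it remains strong, suppose $a\cdot_A x = a'\cdot_A x$ for all $x\in N$, i.e.\ $h(a)\cdot_B x = h(a')\cdot_B x$ for all $x$; strongness of $N$ over $B$ gives $h(a)^*\cdot_B x = h(a')^*\cdot_B x$ for all $x$, and since $h(a)^*=h(a^*)$ and $h(a')^*=h(a'^*)$ this is exactly $a^*\cdot_A x = a'^*\cdot_A x$ for all $x$.

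The computations are all routine, so there is no real obstacle; the only point deserving a moment's attention is~(\ref{mvmodstar}). In~(i) it hinges on $\u$ being the identity for $\odot$, so that testing the two scalars against the single element $\u/I$ already forces $a\sim_I b$; in~(ii) it hinges on $h$ commuting with the involution $^*$. Everything else is a direct transcription of the semiring axioms of $A\vr$ (Proposition~\ref{mvsemi}) through the relevant homomorphism.
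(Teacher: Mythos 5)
Your proof is correct; the paper itself omits the argument entirely (it declares the proposition trivial), and your reduction of both parts to restriction of scalars along the semiring homomorphisms $q\colon A\vr\to (A/I)\vr$ and $h\colon A\vr\to B\vr$, followed by the two short verifications of condition~(\ref{mvmodstar}), is exactly the intended argument. Your identification of the only non-automatic points --- testing the scalars against $\u/I$ in part~(i), and the identity $h(a)^*=h(a^*)$ in part~(ii) --- is accurate.
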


\section{The Grothendieck group of an MV-algebra}
\label{gro}

In this section we shall construct, for any MV-algebra $A$, its Grothendieck group $K_0A$, and we will prove that such a construction defines a functor between the categories $\cat{MV}$ of MV-algebras and $\Gab$ of Abelian groups. 


\begin{definition}\label{k0}
Let $S$ be a semiring, $\la\P(S), \oplus, [\{0\}]\ra$ the Abelian monoid of isomorphism classes of finitely generated projective left $S$-semimodules and let $J = \frab(\P(S))$ the free Abelian group generated by such isomorphism classes. For any finitely generated projective left $S$-semimodule $P$, we denote by $[P]$ its isomorphism class. Let $H$ be the subgroup of $J$ generated by all the expressions $[P] + [Q] - [P \oplus Q]$.

We define the \emph{\gr group} of a semiring $S$ to be the factor group $J/H$ and denote this by $K_0S$. Since the two semiring reducts of an MV-algebra $A$ are isomorphic, we may define the \gr group $K_0A$ up to isomorphism as the \gr group of either of its semiring reducts.
\end{definition}

\begin{lemma}\label{k0mfree}
For any semiring $S$, if we consider $\Gab$ as a concrete category over the one --- $\cat M^{\text{Ab}}$ --- of Abelian monoids, $K_0S$ is $\cat M^{\text{Ab}}$-free over $\la \P(S), \oplus, [\{0\}]\ra$, with associated monoid morphism
\begin{equation}\label{k}
k_S: [P] \in \P(S) \lmapsto [P]/H \in K_0S.
\end{equation}
\end{lemma}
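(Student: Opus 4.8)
The statement to prove is that $K_0S$, regarded via the forgetful functor $\Gab \to \cat M^{\text{Ab}}$, is the free Abelian group over the Abelian monoid $\la \P(S), \oplus, [\{0\}]\ra$, with unit morphism $k_S$ as in (\ref{k}). So I must verify the universal property: given any Abelian group $G$ and any monoid homomorphism $\varphi: \P(S) \lto G$ (into the additive monoid reduct of $G$), there is a unique group homomorphism $\widetilde\varphi: K_0S \lto G$ with $\widetilde\varphi \circ k_S = \varphi$.

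First I would unpack the construction of $K_0S$ from Definition~\ref{k0}. We have the free Abelian group $J = \frab(\P(S))$ on the set of isomorphism classes, with its canonical inclusion of generators $\iota: \P(S) \lto J$ (as a map of sets), and $K_0S = J/H$ where $H$ is generated by the elements $[P] + [Q] - [P\oplus Q]$. Note that $k_S$ factors as the composition of $\iota$ with the quotient map $q: J \lto J/H$. Given $\varphi: \P(S) \lto G$ as above, the freeness of $J$ as an Abelian group on the \emph{set} $\P(S)$ yields a unique group homomorphism $\psi: J \lto G$ with $\psi \circ \iota = \varphi$ (here I only use that $\varphi$ is a function, not yet that it respects $\oplus$). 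The key step is then to check that $\psi$ kills $H$: for generators $[P] + [Q] - [P\oplus Q]$ of $H$ we compute $\psi([P] + [Q] - [P\oplus Q]) = \varphi([P]) + \varphi([Q]) - \varphi([P\oplus Q])$, and this vanishes precisely because $\varphi$ is a \emph{monoid} homomorphism, i.e. $\varphi([P\oplus Q]) = \varphi([P] \oplus [Q]) = \varphi([P]) + \varphi([Q])$. Hence $\psi$ descends through $q$ to a unique group homomorphism $\widetilde\varphi: K_0S \lto G$ with $\widetilde\varphi \circ q = \psi$, and therefore $\widetilde\varphi \circ k_S = \widetilde\varphi \circ q \circ \iota = \psi \circ \iota = \varphi$.

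For uniqueness of $\widetilde\varphi$: any group homomorphism $K_0S \lto G$ agreeing with $\varphi$ after precomposition with $k_S$ is determined on the classes $[P]/H = k_S([P])$, and these generate $K_0S$ as a group (since the $[P]$ generate $J$ and $q$ is surjective); so two such maps agree everywhere. I should also note in passing that $k_S$ is genuinely a monoid homomorphism — $k_S([P] \oplus [Q]) = ([P] + [Q])/H = [P]/H + [Q]/H$ by definition of $H$ — which is what makes the statement type-check.

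There is essentially no hard part here; the whole argument is the standard ``free group modulo additivity relations has the universal property of the Grothendieck-type completion,'' and the only thing to be careful about is bookkeeping the two different freeness properties in play (freeness of $J$ over a \emph{set}, versus the $\cat M^{\text{Ab}}$-freeness of $K_0S$ over a \emph{monoid}) and making sure the monoid-homomorphism hypothesis on $\varphi$ is exactly what is needed to pass to the quotient. The mild subtlety worth a sentence is that $\P(S)$ is a well-defined Abelian monoid in the first place — finitely generated projective semimodules are closed under $\oplus$ (clear from Proposition~\ref{projgol}, since a finite direct sum of retracts of free semimodules is a retract of a free semimodule) — but this is implicitly granted by Definition~\ref{k0} and by the role Theorem~\ref{finproj} plays later, so I would not belabor it.
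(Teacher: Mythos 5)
Your proposal is correct and follows essentially the same route as the paper's proof: factor $k_S$ through the free Abelian group $\frab(\P(S))$, use set-freeness to lift the given monoid homomorphism, observe that the monoid-homomorphism hypothesis forces the lift to vanish on the generators of $H$, and descend to the quotient. You merely spell out the step the paper dismisses as obvious (that the kernel contains $H$) and add the remark that $k_S$ itself is a monoid homomorphism; both are welcome but not substantive departures.
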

\begin{proof}
We need to prove the following universal property: for any Abelian group $W$ and any monoid homomorphism $f: \P(S) \lto W$, there exists a unique group homomorphism $g: K_0S \lto W$ such that $g \circ k_S = f$.

So let $W$ be an Abelian group, $f: \P(S) \lto W$ a monoid homomorphism, and $i: \P(S) \lto \frab(\P(S))$ the inclusion map. Then there exists a unique group homomorphism $f': \frab(\P(S)) \lto W$ such that $f' \circ i = f$. On the other hand, since $f$ is a monoid homomorphism, the kernel of $f'$ obviously contains $H$ and, therefore, $f'$ induces a unique group homomorphism $g: K_0S \lto W$ such that $g \circ \pi = f'$, where $\pi: \frab(\P(S)) \lto K_0S$ is the canonical projection.

So we have $g \circ \pi \circ i = f' \circ i = f$ but, clearly, $\pi \circ i = k_S$, hence $g \circ k_S = f$ and $g$ is unique with such a property, i.~e., $K_0S$ is $\cat M^{\text{Ab}}$-free over $\P(S)$.
\end{proof}


\begin{lemma}\label{k0lemma}
Let $A$ and $B$ be two MV-algebras. Any MV-homomorphism $f: A \lto B$ induces a monoid homomorphism from $\P(A)$ to $\P(B)$.
\end{lemma}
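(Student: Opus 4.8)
*Let $A$ and $B$ be two MV-algebras. Any MV-homomorphism $f: A \lto B$ induces a monoid homomorphism from $\P(A)$ to $\P(B)$.*

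The plan is to exploit the functorial "change of scalars" associated to a semiring homomorphism, specialized to the MV-semiring setting via Proposition~\ref{mvsemi}. First I would replace $f: A \lto B$ by the induced semiring homomorphism between the semiring reducts $f\vr \colon A\vr \lto B\vr$ (an MV-homomorphism preserves $\oplus$, $^*$ and $\z$, hence also $\odot$, $\vee$ and $\u$, so it is indeed a morphism in $\sR$). Then, following the footnote to Proposition~\ref{semiend}, any $A\vr$-semimodule $M$ acquires... no, we need the other direction: the point is to go from $A$-semimodules to $B$-semimodules, so the natural construction is the extension of scalars $M \mapsto B \tensor_A M$, i.e. $B\vr$ itself regarded as a $(B\vr, A\vr)$-bisemimodule via $f\vr$, tensored with $M$. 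The map on objects is then $[P] \in \P(A) \lmapsto [B \tensor_A P] \in \P(B)$, and the plan is to check (i) this is well-defined, i.e. $B \tensor_A P$ is again finitely generated projective, (ii) it respects isomorphism classes, and (iii) it is a monoid homomorphism with respect to $\oplus$ and sends $[\{0\}]$ to $[\{0\}]$.

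For (i), I would invoke Theorem~\ref{finproj}: if $P$ is $n$-generated projective over $A$, then $P \cong A \cdot \{\vec u_i\}_{i=1}^n$ for some multiplicatively idempotent matrix $U = (u_{ij}) \in M_n(A)$. Applying $f$ entrywise gives a matrix $f(U) = (f(u_{ij})) \in M_n(B)$; since $f$ is a semiring homomorphism (preserving $\vee$ and $\odot$, hence the matrix operations $+$ and $\star$ of the semiring $M_n(B)$), $f(U) \star f(U) = f(U \star U) = f(U)$, so $f(U)$ is again multiplicatively idempotent. By Theorem~\ref{finproj} again, $B \cdot \{f(\vec u_i)\}_{i=1}^n$ is an $n$-generated projective $B$-semimodule, and the natural candidate is to take this as the image of $[P]$. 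This avoids having to develop tensor products of semimodules from scratch — which is the cleaner route given what the excerpt has actually proved — so I would phrase the whole argument matrix-theoretically: $f$ induces a semiring homomorphism $M_n(f)\colon M_n(A) \lto M_n(B)$ carrying idempotents to idempotents, hence a map on the level of "idempotent-matrix-generated" semimodules.

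For (ii), the main thing to verify is that if $U$ and $U'$ are idempotent matrices in $M_n(A)$ (after padding with zero rows/columns we may assume a common size $n$) with $A\cdot\{\vec u_i\} \cong A\cdot\{\vec u'_i\}$, then $B\cdot\{f(\vec u_i)\} \cong B\cdot\{f(\vec u'_i)\}$; the isomorphism between the $A$-semimodules is realized (identifying them as retracts of $A^n$) by a pair of matrices $C, D$ with $U\star C\star U' = C$, $U'\star D\star U = D$ and $C\star D$, $D\star C$ acting as the respective identities on the two retracts, and applying $f$ preserves all of these relations. Parts (iii), that the construction sends a direct sum $P \oplus Q$ (presented by the block-diagonal idempotent matrix) to the corresponding direct sum over $B$, and the zero semimodule to the zero semimodule, are then immediate from the fact that $f$ commutes with forming block-diagonal matrices. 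The main obstacle is purely bookkeeping: making the matrix calculus of "projective = retract of free via an idempotent matrix" precise enough that "apply $f$ entrywise" visibly preserves well-definedness on isomorphism classes and the monoid structure; once that framework is in place — and Theorems~\ref{matrixendo} and~\ref{finproj} essentially supply it — every verification reduces to the single fact that $f$ is a semiring homomorphism, hence induces the semiring homomorphism $M_n(f)$ on matrix rings for every $n$.
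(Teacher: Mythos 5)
Your proposal is correct and follows essentially the same route as the paper: identify finitely generated projectives with multiplicatively idempotent matrices via Theorem~\ref{finproj}, apply $f$ entrywise (which preserves idempotency because $f$ is a semiring homomorphism), and use block-diagonal matrices, after padding to a common size, to see that direct sums and the zero class are respected. Your treatment of isomorphism-invariance via a pair of intertwining matrices $C,D$ is if anything slightly more explicit than the paper's, which argues that the generators of each semimodule are linear combinations of the generators of the other and that $f$ preserves these expressions.
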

\begin{proof}
By Theorem~\ref{finproj}, finitely generated projective semimodules over a semiring can be identified with multiplicatively idempotent square matrices with values in the same semiring. It is immediate to verify that, if $M \cong A \cdot (u_{ij})_{i,j=1}^m$ and $N \cong A \cdot (v_{ij})_{i,j=1}^n$ are finitely generated projective $A$-semimodules, the finitely generated projective $A$-semimodule $M \oplus N$ is isomorphic to $A \cdot (w_{ij})_{i,j=1}^{m+n}$ with
\begin{equation}\label{matrsum}
w_{ij} = \left\{\begin{array}{ll}
u_{ij} & \text{if } i,j \leq m \\
v_{(i-m)(j-m)} & \text{if } i,j > m \\
0 & \text{otherwise}
\end{array}\right..
\end{equation}
Moreover, if $M$ and $N$ are isomorphic, we can assume the corresponding matrices to have the same size. Indeed, suppose $m < n$, $M \cong M \oplus \underbrace{\{0\} \oplus \cdots \oplus \{0\}}_{n-m \text{ times}}$, hence the $m \times m$ matrix $(u_{ij})$ generates a semimodule isomorphic to the one generated by the $n \times n$ matrix $(u'_{ij})$ which coincides with $(u_{ij})$ on every entry $ij$ such that $i,j \leq m$ and is constantly equal to zero elsewhere.

Let now $(u_{ij})$ be an idempotent $n \times n$ $A$-matrix and consider the $B$-matrix $(f(u_{ij}))$. Since $f$ is an MV-homomorphism, it preserves all the MV-algebraic operations and the lattice structure, hence it is also a semiring homomorphism. So we have
$$\left(\bigvee_{k=1}^n f(u_{ik}) \odot f(u_{kj})\right) = \left(f\left(\bigvee_{k=1}^n u_{ik} \odot u_{kj}\right)\right) = (f(u_{ij})),$$
whence $(f(u_{ij})) \star (f(u_{ij})) = (f(u_{ij}))$ and $(f(u_{ij}))$ is an idempotent $n \times n$ $B$-matrix.

Now assume that $M \cong A \cdot (u_{ij})_{i,j=1}^n$ and $N \cong A \cdot (v_{ij})_{i,j=1}^n$ are isomorphic finitely generated projective $A$-semimodules. Then, for all $i = 1, \ldots, n$, there exist $a_{i1}, \ldots, a_{in}, b_{i1}, \ldots, b_{in} \in A$ such that $(u_{i1}, \ldots, u_{in}) = \bigvee_{k=1}^n a_{ik} \cdot (v_{k1}, \ldots, v_{kn})$ and $(v_{i1}, \ldots, v_{in}) = \bigvee_{k=1}^n b_{ik} \cdot (u_{k1}, \ldots, u_{kn})$. So each vector $f(\vec u_i) = (f(u_{i1}), \ldots, f(u_{in}))$ can be expressed as a linear combination of the vectors $\{f(\vec v_i)\}_{i=1}^n$ with the scalars $f(a_{i1}), \ldots, f(a_{in}) \in B$ and, conversely, each $f(\vec v_i)$ can be expressed as a linear combination of the vectors $\{f(\vec u_i)\}_{i=1}^n$ with the scalars $f(b_{i1}), \ldots, f(b_{in}) \in B$; this means that the subsemimodules of $B^n$ generated respectively by $\{f(\vec u_i)\}_{i=1}^n$ and $\{f(\vec v_i)\}_{i=1}^n$ are isomorphic.

The above guarantees that
\begin{equation}\label{fchap}
\begin{array}{cccc}
\hat f: & \P(A) & \lto & \P(B) \\
		& \left[A \cdot (u_{ij})\right] & \lmapsto & \left[B \cdot (f(u_{ij}))\right]
\end{array}
\end{equation}
is a well-defined map. The fact that $\hat f\left(\left[\{0\}\right]\right) = \left[\{0\}\right]$ is obvious. On the other hand, given two classes $\left[A \cdot (u_{ij})_{i,j=1}^m\right], \left[A \cdot (v_{ij})_{i,j=1}^n\right] \in \P(A)$, the semimodule $A \cdot (u_{ij}) \oplus A \cdot (v_{ij})$ is isomorphic to $A \cdot (w_{ij})_{i,j=1}^{m+n}$ with $(w_{ij})$ defined by (\ref{matrsum}), and
$$f(w_{ij}) = \left\{\begin{array}{ll}
f(u_{ij}) & \text{if } i,j \leq m \\
f(v_{(i-m)(j-m)}) & \text{if } i,j > m \\
f(0) = 0 & \text{otherwise}
\end{array}\right.,$$
whence
$$\hat f\left(\left[A \cdot (u_{ij}) \oplus A \cdot (v_{ij})\right]\right) = [B \cdot (f(u_{ij})) \oplus B \cdot (f(v_{ij}))] = \hat f\left(\left[A \cdot (u_{ij})\right]\right) \oplus \hat f\left(\left[A \cdot (v_{ij})\right]\right),$$
and the assertion is proved.
\end{proof}

Thanks to Lemma~\ref{k0lemma}, we can now prove the following

\begin{theorem}\label{k0thm}
$K_0$ is a functor from $\MV$ to $\Gab$.
\end{theorem}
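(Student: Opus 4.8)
The plan is to verify the two functoriality axioms, having already done the main work in Lemma~\ref{k0lemma} and Lemma~\ref{k0mfree}. The object part of the functor is the assignment $A \mapsto K_0 A$ from Definition~\ref{k0}. For the morphism part, given an MV-homomorphism $f\colon A \lto B$, Lemma~\ref{k0lemma} produces a monoid homomorphism $\hat f\colon \P(A) \lto \P(B)$; composing with the associated monoid morphism $k_B\colon \P(B) \lto K_0 B$ gives a monoid homomorphism $k_B \circ \hat f\colon \P(A) \lto K_0 B$ into an Abelian group. By the universal property established in Lemma~\ref{k0mfree} ($K_0 A$ is $\cat M^{\text{Ab}}$-free over $\P(A)$ with associated morphism $k_A$), there is a unique group homomorphism $K_0 f\colon K_0 A \lto K_0 B$ with $K_0 f \circ k_A = k_B \circ \hat f$. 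This defines $K_0$ on morphisms.

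Next I would check $K_0(\id_A) = \id_{K_0 A}$. The map $\id_A$ induces, via Lemma~\ref{k0lemma}, the identity on $\P(A)$ (since $[A\cdot(u_{ij})] \mapsto [A\cdot(\id_A(u_{ij}))] = [A\cdot(u_{ij})]$), so $k_A \circ \widehat{\id_A} = k_A = \id_{K_0 A} \circ k_A$; by the uniqueness clause of the universal property, $K_0(\id_A) = \id_{K_0 A}$. For composition, given $f\colon A \lto B$ and $g\colon B \lto C$, I would first observe $\widehat{g \circ f} = \hat g \circ \hat f$ as monoid homomorphisms $\P(A) \lto \P(C)$ — this is immediate from the formula in~(\ref{fchap}), since $(g\circ f)(u_{ij}) = g(f(u_{ij}))$. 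Then both $K_0(g\circ f)$ and $K_0 g \circ K_0 f$ are group homomorphisms $K_0 A \lto K_0 C$ satisfying the defining equation relative to $k_A$: indeed $(K_0 g \circ K_0 f)\circ k_A = K_0 g \circ (k_B \circ \hat f) = (K_0 g \circ k_B)\circ \hat f = (k_C \circ \hat g)\circ \hat f = k_C \circ \widehat{g\circ f}$, which is exactly the equation characterizing $K_0(g\circ f)$. Uniqueness in Lemma~\ref{k0mfree} then forces $K_0(g\circ f) = K_0 g \circ K_0 f$.

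Since the two semiring reducts $A\vr$ and $A\wr$ of an MV-algebra are isomorphic (Proposition~\ref{mvsemi}), the definition of $K_0 A$ is independent of the choice of reduct up to canonical isomorphism, so no ambiguity arises at the object level; and because an MV-homomorphism is simultaneously a homomorphism of both reducts (it preserves $\oplus$, $\odot$, $\vee$, $\wedge$), the morphism assignment is likewise reduct-independent. There is essentially no hard step left here: all the substance was absorbed into Lemma~\ref{k0lemma} (well-definedness of $\hat f$ on isomorphism classes, preservation of $\oplus$) and Lemma~\ref{k0mfree} (the freeness that makes $K_0 f$ exist and be unique). The only point requiring a word of care is making sure the universal-property equation is stated with respect to the correct associated morphisms $k_A$, $k_B$, $k_C$ so that the uniqueness arguments for the two axioms go through cleanly; everything else is formal diagram-chasing.
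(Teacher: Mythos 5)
Your proposal is correct and follows the same route as the paper: define $K_0 f$ as the unique group homomorphism extending $k_B \circ \hat f$ via Lemma~\ref{k0mfree}, with all the substance residing in Lemma~\ref{k0lemma}. You merely spell out the identity and composition checks that the paper dismisses as a ``trivial consequence'' of (\ref{fchap}), and your uniqueness-based diagram chase for these is exactly the intended argument.
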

\begin{proof}
Let $A$ and $B$ be two MV-algebras, $f$ a homomorphism between them, and $k_B: \P(B) \lto K_0B$ the monoid homomorphism defined by (\ref{k}). By Lemma \ref{k0lemma}, we may define the monoid homomorphism $\hat f: \P(A) \lto \P(B)$ as in (\ref{fchap}). Then $k_B \circ \hat f: \P(A) \lto K_0B$ is a monoid homomorphism and, by Lemma~\ref{k0mfree}, it can be extended in a unique way to a group homomorphism $K_0 f: K_0A \lto K_0B$.

The fact that $K_0$ preserves identity morphisms and morphism composition is a trivial consequence of (\ref{fchap}). The theorem is proved.
\end{proof}

As immediate generalizations of Lemma~\ref{k0lemma} and Theorem~\ref{k0thm}, we have the following results.

\begin{lemma}\label{k0lemmasr}
Let $S$ and $T$ be two semirings. Any semiring homomorphism $f: S \lto T$ induces a monoid homomorphism from $\P(S)$ to $\P(T)$.
\end{lemma}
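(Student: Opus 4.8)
The plan is to mimic the proof of Lemma~\ref{k0lemma} almost verbatim, after observing that the only features of an MV-homomorphism exploited there were preservation of the two semiring operations (written $\vee$ and $\odot$ in that context) together with the constants $0$ and $1$ --- precisely what a semiring homomorphism provides. First I would invoke Theorem~\ref{finproj} to identify, for any semiring $R$, the finitely generated projective $R$-semimodules up to isomorphism with the multiplicatively idempotent square matrices over $R$: a class in $\P(R)$ is represented by some idempotent $(u_{ij}) \in M_n(R)$, the corresponding semimodule being $R \cdot \{\vec u_i\}_{i=1}^n$. Given a semiring homomorphism $f: S \lto T$, I then define the candidate map $\hat f: \P(S) \lto \P(T)$ by $[S \cdot (u_{ij})] \lmapsto [T \cdot (f(u_{ij}))]$, with $f$ applied entrywise.

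The substantive points to verify are the same two as in Lemma~\ref{k0lemma}. For the first, $(f(u_{ij}))$ is again multiplicatively idempotent: since $f$ commutes with finite sums and products, $\left(\sum_{k=1}^n f(u_{ik}) f(u_{kj})\right) = \left(f\left(\sum_{k=1}^n u_{ik} u_{kj}\right)\right) = (f(u_{ij}))$, whence $(f(u_{ij})) \star (f(u_{ij})) = (f(u_{ij}))$. For the second --- well-definedness of $\hat f$ --- I would argue just as before. Two idempotent matrices of different sizes representing isomorphic semimodules can be brought to a common size by padding with a zero block, which is compatible with the entrywise application of $f$ because $f(0_S) = 0_T$; and if $(u_{ij})$ and $(v_{ij})$ are idempotent $n \times n$ matrices over $S$ with $S \cdot \{\vec u_i\}_{i=1}^n \cong S \cdot \{\vec v_i\}_{i=1}^n$, then each $\vec u_i$ is an $S$-linear combination of the $\vec v_j$ and conversely; applying $f$ entrywise transports these identities into $T$-linear combinations among the $f(\vec u_i)$ and the $f(\vec v_j)$, so the subsemimodules of $T^n$ they generate are isomorphic.

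Finally I would check that $\hat f$ is a monoid homomorphism: the zero matrix maps to the zero matrix, so $\hat f([\{0\}]) = [\{0\}]$, and since by~(\ref{matrsum}) the class of $M \oplus N$ corresponds to the block-diagonal matrix whose off-diagonal blocks are constantly $0_S$ --- sent by $f$ to $0_T$ --- one obtains $\hat f([M] \oplus [N]) = \hat f([M]) \oplus \hat f([N])$. I do not expect a genuine obstacle: the only step that asks for any care is the well-definedness of $\hat f$, exactly as in Lemma~\ref{k0lemma}, and it carries over unchanged precisely because an isomorphism of semimodules is witnessed by linear combinations with coefficients in the base semiring, which any semiring homomorphism moves over to the target.
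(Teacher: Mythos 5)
Your proposal is exactly the paper's intended argument: the paper proves Lemma~\ref{k0lemmasr} by remarking that the proof of Lemma~\ref{k0lemma} only uses that an MV-homomorphism is a semiring homomorphism, and you have carried out precisely that adaptation (idempotent-matrix representation via Theorem~\ref{finproj}, entrywise application of $f$, well-definedness via zero-padding and transported linear combinations, and additivity via the block-diagonal matrix of~(\ref{matrsum})). No gaps beyond those already present in the paper's own treatment.
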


\begin{theorem}\label{k0thmsr}
$K_0$ is a functor from the category $\sR$ of (unital) semirings to $\Gab$.
\end{theorem}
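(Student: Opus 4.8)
The plan is to mirror the proof of Theorem~\ref{k0thm} verbatim, replacing MV-algebras by semirings throughout; the only substantive work is to re-establish Lemma~\ref{k0lemmasr} (now stated for arbitrary semirings), since once that is in hand, Lemma~\ref{k0mfree} — which was already proved for any semiring $S$ — together with the universal property does the rest exactly as before. So first I would observe that Theorem~\ref{finproj} and Remark~\ref{cyclic}, and more relevantly the matrix description of finitely generated projective semimodules as multiplicatively idempotent square matrices in $M_n(S)$, are all already proved for an arbitrary semiring $S$, not just for MV-semirings; this is the key input that makes the generalization of Lemma~\ref{k0lemma} go through unchanged.

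For Lemma~\ref{k0lemmasr}: given a semiring homomorphism $f: S \lto T$ and a finitely generated projective $S$-semimodule presented as $S \cdot (u_{ij})_{i,j=1}^n$ with $(u_{ij}) \star (u_{ij}) = (u_{ij})$ in $M_n(S)$, I would first note that $f$ applied entrywise to a matrix is a semiring homomorphism $M_n(S) \lto M_n(T)$ — this follows formally from the definition of $\star$ in terms of $+$ and $\cdot$, which $f$ preserves — so $(f(u_{ij}))$ is again multiplicatively idempotent and hence, by Theorem~\ref{finproj}, defines a finitely generated projective $T$-semimodule $T \cdot (f(u_{ij}))$. Next, well-definedness: if $S \cdot (u_{ij})_{i,j=1}^n \cong S \cdot (v_{ij})_{i,j=1}^n$ (sizes may be equalized by padding with zero rows/columns exactly as in the proof of Lemma~\ref{k0lemma}), the isomorphism is witnessed by scalar matrices $(a_{ik})$ and $(b_{ik})$ over $S$ expressing each generating vector as a linear combination of the others; applying $f$ entrywise transports these relations to $T$, showing $T \cdot (f(u_{ij})) \cong T \cdot (f(v_{ij}))$. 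Finally, additivity: the block-diagonal formula~(\ref{matrsum}) for $M \oplus N$ commutes with applying $f$ entrywise (since $f(0)=0$), so the induced map $\hat f: \P(S) \lto \P(T)$ sends $[\{0\}]$ to $[\{0\}]$ and direct sums to direct sums, i.e.\ it is a monoid homomorphism. This is literally the text of Lemma~\ref{k0lemma} with ``MV-algebra'' replaced by ``semiring'', ``$\odot$'' by ``$\cdot$'', and ``$\bigvee$'' by ``$\sum$''.

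With Lemma~\ref{k0lemmasr} established, Theorem~\ref{k0thmsr} follows as in Theorem~\ref{k0thm}: for $f: S \lto T$, compose $\hat f: \P(S)\lto\P(T)$ with $k_T: \P(T) \lto K_0T$ to get a monoid homomorphism $\P(S) \lto K_0T$, and invoke the $\cat M^{\text{Ab}}$-freeness of $K_0S$ over $\P(S)$ (Lemma~\ref{k0mfree}) to extend it uniquely to a group homomorphism $K_0 f: K_0 S \lto K_0 T$. Functoriality — preservation of identities and of composition — is immediate from the explicit formula~(\ref{fchap}) for $\hat f$ and the uniqueness clause in the universal property, since $\widehat{g\circ f}=\hat g\circ\hat f$ entrywise and the identity homomorphism induces the identity on $\P(S)$.

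I do not expect any real obstacle: the generality was already built into the earlier results (Theorem~\ref{finproj}, Theorem~\ref{matrixendo}, Lemma~\ref{k0mfree} are all stated for arbitrary semirings), and the MV-specific features used in Lemma~\ref{k0lemma} — the lattice operations $\vee,\odot$ — enter only as instances of the semiring operations $+,\cdot$, so nothing is lost by abstraction. The mildest point requiring a word of care is the entrywise-application-is-a-homomorphism claim for matrix semirings, but this is routine. One could therefore even present Theorem~\ref{k0thmsr} first and deduce Theorem~\ref{k0thm} as the special case $S=A\vr$; the paper's ordering is simply expository.
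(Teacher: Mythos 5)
Your proposal is correct and matches the paper's own treatment: the paper likewise proves Theorem~\ref{k0thmsr} by noting that Lemma~\ref{k0lemma} restates and reproves verbatim for arbitrary semirings (the MV-operations $\vee,\odot$ entering only as instances of $+,\cdot$) and then applying the argument of Theorem~\ref{k0thm} unchanged. Your added remarks on equalizing matrix sizes, the entrywise homomorphism $M_n(S)\lto M_n(T)$, and compatibility with the block-sum formula are exactly the routine verifications the paper leaves implicit.
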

\begin{proof}
It is immediate to verify that Lemma~\ref{k0lemma} can be stated and proved for semirings in a completely analogous way. Then the argument of the proof of Theorem~\ref{k0thm} can be applied straightforwardly.
\end{proof}

\section{MV-semimodules and semimodules over idempotent semifields with strong unit}
\label{functor}

For a lattice-ordered Abelian group $G$, an element $u > 0$ is called a \emph{strong order unit} if, for all $x \in G$, $x > 0$, there exists $n \in \N$ such that $n u > x$. In the category $\ell\Gab_u$ of Abelian $\ell$-groups with a distinguished strong order unit the morphisms are $\ell$-group homomorphisms which send the distinguished strong unit of the domain to the one of the codomain.

In~\cite{mun} a categorical equivalence between the category $\ell\Gab_u$ and the one --- $\MV$ --- of MV-algebras was established. On the other hand, we already discussed the relationship between Abelian $\ell$-groups and idempotent semifields in Example~\ref{l-group}.

In this section we shall see how semimodules over idempotent semifields with a distinguished strong order unit (henceforth we shall call them \emph{idempotent $u$-semifields}) are related to the ones over the semiring reducts of MV-algebras. Even if so far we have preferred the $\vee$-$\odot$ notation for MV-semirings, in this section we shall use the $\wedge$-$\oplus$ semiring reduct of MV-algebras for simplicity. Indeed, the results and constructions we are going to present in the present section are better presented and more clearly visualized using the $\wedge$-$\oplus$ notation; on the other hand, according to Remark \ref{star}, the two notations are equivalent up to the application of $^*$.

The main ingredients of this section are some standard constructions and results involving tensor products of semimodules. The definition we recall below and all the results from Theorem \ref{tensormvsemi} to Theorem \ref{adjfunct} can be also derived as instances of some more general ones presented in \cite{kat2,kat4,kat7}; however, technical reasons suggest us to present them along with their specific proofs.
\begin{definition}\label{bimor}
Let $S$ be a semiring, $M$ a right $S$-semimodule, $N$ a left $S$-semimodule and $L$ a commutative monoid. A map $f: M \times N \lto L$ is called an \emph{$S$-bimorphism} if, for all $x, x_1, x_2 \in M$, $y, y_1, y_2 \in N$, $a \in S$, the following conditions hold:
\begin{enumerate}[(i)]
\item $f(x_1 + x_2, y) = f(x_1,y) + f(x_2,y)$,
\item $f(x, y_1 + y_2) = f(x,y_1) + f(x,y_2)$,
\item $f(xa,y) = f(x,ay)$.
\end{enumerate}
The \emph{$S$-tensor product} $M \tensor_S N$ is the codomain of the universal bimorphism with domain $M \times N$.
\end{definition}
It is known (see~\cite{golan}) how to construct the tensor product of a right and a left semimodule over the same semiring: it is the quotient, under a suitable monoid congruence, of the free commutative monoid over the Cartesian product of the two given semimodules. In \cite{lms} an alternative construction for semimodules over commmutative idempotent semirings --- which is proved to be equivalent \cite[Section 4, Theorem 1]{lms} --- is presented together with several results about the preservation of order completeness under tensor product.

Obviously, the constructions of \cite{golan,kat2,kat4,kat7} hold also in the case of semimodules on idempotent semirings, and the one presented in \cite{lms} would apply to MV-semirings since they are commutative and idempotent; nonetheless, since semimodules over idempotent semirings are semilattices with identity, it is possible to show yet another equivalent construction of their tensor products (see also Section 2 and Definition 3.1 of \cite{kat4}). Before we present it we recall that, given a set $X$, the free semilattice with identity over $X$ is up to isomorphisms the set $\wp_F(X)$ of the finite subsets of $X$ equipped with set-theoretic union and the bottom element $\varnothing$.

\begin{theorem}\label{tensormvsemi}
Let $A$ be an idempotent semiring, $M$ a right $A$-semimodule and $N$ a left $A$-semimodule. Then the tensor product $M \tensor_A N$ is, up to isomorphisms, the quotient $\wp_F(M \times N)/\equiv_R$ of the free semilattice generated by $M \times N$ with respect to the semilattice congruence generated by the set $R$:
\begin{equation}\label{R}
R = \left\{
	\begin{array}{l}
		\left(\left\{\left(\bigvee X, y\right)\right\}, \bigcup_{x \in X}\{(x,y)\}\right) \\
		\left(\left\{\left(x, \bigvee Y\right)\right\}, \bigcup_{y \in Y}\{(x,y)\}\right) \\
		\left(\{(x a, y)\}, \{(x,a y)\}\right) \\
	\end{array} \right\vert
	\left.
	\begin{array}{l}
	X \in \wp_F(M), y \in N \\
	Y \in \wp_F(N), x \in M \\
	a \in A \\
	\end{array}
	\right\}.
\end{equation}
\end{theorem}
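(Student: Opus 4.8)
The plan is to prove that the pair $(T,\beta)$, where $T \bydef \wp_F(M \times N)/\equiv_R$ and $\beta : M \times N \lto T$ sends $(x,y)$ to the class $[\{(x,y)\}]$ of the corresponding singleton, is a universal $A$-bimorphism with domain $M \times N$; by the essential uniqueness of the codomain of the universal bimorphism (Definition~\ref{bimor}) this identifies $T$ with $M \tensor_A N$ up to isomorphism.

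First I would check that $\beta$ is an $A$-bimorphism. Since $A$ is idempotent, the monoid additions of $M$ and of $N$ coincide with their semilattice joins, so $x_1 + x_2 = \bigvee\{x_1,x_2\}$ in $M$; applying the first family of pairs in $R$ with $X = \{x_1,x_2\}$ gives $\{(x_1+x_2,y)\} \equiv_R \{(x_1,y)\} \cup \{(x_2,y)\}$, i.e.\ $\beta(x_1+x_2,y) = \beta(x_1,y)+\beta(x_2,y)$, because the join of $T$ is induced by union. Condition (ii) of Definition~\ref{bimor} follows symmetrically from the second family, and $\beta(xa,y) = \beta(x,ay)$ is immediate from the third.

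The core of the argument is the universal property. Let $L$ be a commutative monoid and $f : M \times N \lto L$ an $A$-bimorphism. Putting $x_1 = x_2 = x$ in condition (i) and using $x + x = x$ (valid in any semimodule over an idempotent semiring), one sees that every value $f(x,y)$ is idempotent in $L$; hence the map $\hat f : \wp_F(M \times N) \lto L$ defined by $\hat f(\{p_1,\dots,p_k\}) = f(p_1) + \cdots + f(p_k)$ and $\hat f(\varnothing) = 0_L$ is a well-defined homomorphism of semilattices with identity (idempotency of the $f(p_i)$ is exactly what makes $\hat f$ compatible with overlapping unions). I would then show that $\hat f$ descends through $\equiv_R$: the relation $\{(S,S') : \hat f(S) = \hat f(S')\}$ is a semilattice congruence on $\wp_F(M\times N)$, so, since $\equiv_R$ is the semilattice congruence generated by $R$, it suffices to check that $\hat f$ agrees on the two components of each generating pair. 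For the first family this is $f(\bigvee X, y) = \sum_{x \in X} f(x,y)$, proved by induction on $|X|$ from condition (i), with the degenerate case $X = \varnothing$ (that is $\bigvee X = 0_M$) dealt with separately; for the second family it is the symmetric identity from condition (ii); for the third it is precisely condition (iii). Consequently $\hat f$ factors uniquely as $\hat f = \tilde f \circ \pi$ through the projection $\pi : \wp_F(M\times N) \lto T$, and the resulting monoid homomorphism $\tilde f : T \lto L$ satisfies $\tilde f \circ \beta = f$. Uniqueness of $\tilde f$ is clear since the elements $\beta(x,y)$ generate $T$ as a monoid, so any homomorphism agreeing with $f$ after precomposition with $\beta$ is forced on all of $T$.

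The step I expect to be most delicate is the descent of $\hat f$ through $\equiv_R$: one must set it up at the level of semilattice congruences so that checking the generators of $R$ suffices, run the induction on $|X|$ for arbitrary finite joins while invoking idempotency of the $f(x,y)$ when the chosen representatives are not distinct, and dispose of the empty-join case. The remaining verifications --- that $\beta$ is a bimorphism, that $\hat f$ is a semilattice homomorphism, and the uniqueness of $\tilde f$ --- are routine.
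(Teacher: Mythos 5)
Your proposal is correct and follows essentially the same route as the paper's proof: extend the given bimorphism $f$ to a semilattice homomorphism on the free semilattice $\wp_F(M\times N)$, check that its kernel contains the generating pairs in $R$ so that it descends to the quotient, and conclude via the universal property of the universal bimorphism. Your write-up is if anything slightly more explicit than the paper's (verifying that the canonical map is itself a bimorphism, running the induction on $|X|$, and flagging the empty-join case, which the paper glosses over), but the underlying argument is the same.
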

\begin{proof}
Let $L = \la L, +, 0_L\ra$ be any commutative monoid and $f: M \times N \lto L$ be an $A$-bimorphism. Since $M$ and $N$ are semilattices, the image of $f$ is an idempotent submonoid of $L$, i.~e. a semilattice; hence we can extend $f$ to a monoid homomorphism $h_f: \wp_F(M \times N) \lto L$ with domain the free semilattice over $M \times N$; thus $h_f \circ \sigma = f$, where $\sigma: (x,y) \in M \times N \lmapsto \{(x,y)\} \in \wp_F(M \times N)$ is the singleton map. On the other hand, the fact that $f$ is an $A$-bimorphism implies $f\left(\bigvee X, y\right) = \sum_{x \in X} f(x,y)$, $f\left(x,\bigvee Y\right) = \sum_{y \in Y} f(x,y)$ and $f(x a, y) = f(x, a y)$, for all $x \in M$, $y \in M$, $X \in \wp_F(M)$, $Y \in \wp_F(N)$ and $a \in A$. Then, since $h_f$ acts as a semilattice homomorphism, we have $h_f\left(\left\{\left(\bigvee X,y\right)\right\}\right) = h_f\left(\bigcup_{x \in X}\{(x,y)\}\right)$ and $h_f\left(\left\{\left(x,\bigvee Y\right)\right\}\right) = h_f\left(\bigcup_{y \in Y}\{(x,y)\}\right)$. Moreover, we have
$$\begin{array}{l}
h_f(\{(x a,y)\}) = (h_f \circ \sigma)(x a,y) = f(x a,y) \\
= f(x,a y) = (h_f \circ \sigma)(x,a y) = h_f(\{(x,a y)\}).
\end{array}$$

The above means that the kernel of $h_f$ contains $R$. Let $P$ denote the quotient semilattice $\wp_F(M \times N)/\equiv_{R}$ and let $\pi$ be the canonical projection of $\wp_F(M \times N)$ onto $P$. Then the map
$$k_f \ : \quad [X]_{\equiv_{R}} \ \in \ P \quad \lmapsto \quad h_f(X) \ \in \ L$$
is a well-defined monoid homomorphism. Moreover we have $k_f \circ \pi \circ \sigma = h_f \circ \sigma = f$, so we have extended the $A$-bimorphism $f$ to a monoid homomorphism $k_f$, and it is easy to verify that the map $\tau = \pi \circ \sigma$ from $M \times N$ to $P$ is indeed an $A$-bimorphism.

The following commutative diagram should clarify the constructions above.
$$\xymatrix{
M \times N \ar[rr]^\sigma \ar[rddd]_f \ar[rd]_{\tau} && \wp_F(M \times N) \ar[lddd]^{h_f} \ar[ld]^{\pi} \\
 & P  \ar[dd]^{k_f} & \\
 && \\
 & L & \\
}$$

It is self-evident that $P$ and $\tau$ do not depend either on the monoid $L$ or on the $A$-bimorphism $f$. Therefore $\tau$ is the universal $A$-bimorphism whose domain is $M \times N$, and $P$ is its codomain, i.~e. $P \cong M \tensor_A N$.
\end{proof}

If $x \in M$ and $y \in N$, we will denote by $x \tensor y$ the image of the pair $(x,y)$ under $\tau$, i.~e. the congruence class $[\{(x,y)\}]_{\equiv_{R}}$, and we will call it an \emph{$A$-tensor} or, if there will not be danger of confusion, simply a \emph{tensor}. It is clear, then, that every element of $M \tensor_A N$ is the join of a finite set of tensors, so
$$M \tensor_A N = \left\{\bigvee_{i=1}^n x_i \tensor y_i \ \Big\vert \ x_i \in M, y_i \in N, n \in \N \right\}.$$

Let now $A$ and $B$ be two idempotent semirings, if $M$ is a $B$-$A$-bisemimodule and $N$ is a left $A$-semimodule, then the tensor product $M \tensor_A N$ naturally inherits a structure of left $B$-semimodule from the one defined on $M$:
\begin{equation*}
\star_l: \ \left(b, \bigvee_{i=1}^n x_i \tensor y_i\right) \in B \times \left(M \tensor N\right) \ \lto \ \bigvee_{i=1}^n (b \cdot x_i) \tensor y_i \in M \tensor N.
\end{equation*}
Indeed it is trivial that $\star$ distributes over finite joins in both coordinates; on the other hand, the external associative law comes straightforwardly from the fact that $M$ is a left $B$-semimodule. Analogously, if $M$ is a right $A$-semimodule and $N$ is an $A$-$B$-bisemimodule, then the tensor product $M \tensor_A N$ is a right $B$-semimodule with the scalar multiplication defined, obviously, as
\begin{equation*}
\star_r: \ \left(\bigvee_{i=1}^n x_i \tensor y_i, b\right) \in \left(M \tensor N\right) \times B \ \lto \ \bigvee_{i=1}^n x_i \tensor (y_i \cdot b) \in M \tensor N.
\end{equation*}
Therefore, it also follows that, if $C$ is another idempotent semiring such that $M$ is a $B$-$A$-bisemimodule and $N$ is an $A$-$C$-bisemimodule, then $M \tensor_A N$ is a $B$-$C$-bisemimodule. In particular, if $A$ is a commutative idempotent semiring, any tensor product of $A$-semimodules is an $A$-semimodule itself.

All these properties of the tensor product of semimodules will allow us to show some relations between tensor products and hom-sets. First we need the following lemma.

\begin{lemma}\label{diam}
Let $A$ and $B$ be idempotent semirings, $M$ an $A$-$B$-bisemimodule and $N$ a left $A$-semimodule. Then $\hom_A(M,N)$ is a left $B$-semimodule with the external product $\bullet_l$ defined, for $b \in B$, $h \in \hom_A(M,N)$ and $x \in M$, by
\begin{equation*}
(b \bullet_l h)(x) = h(x \cdot_{B} b),
\end{equation*}
$\cdot_{B}$ denoting the right external product of $M$.

Analogously, if $M$ is a $B$-$A$-bisemimodule and $N$ is a right $A$-semimodule, then $\hom_A(M,N)$ is a right $B$-semimodule with the external product $\bullet_r$ defined, for $b \in B$, $h \in \hom_A(M,N)$ and $x \in M$, by
\begin{equation}\label{homrightq}
(h \bullet_r b)(x) = h(b \cdot_{B} x),
\end{equation}
$\cdot_{B}$ denoting the left external product of $M$.
\end{lemma}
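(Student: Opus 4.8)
The plan is to verify directly that the two prescribed external operations satisfy the semimodule axioms (SM1)--(SM5), treating the left and right cases symmetrically and writing out only the left case $\bullet_l$ in detail, since the right case is obtained by the obvious reversal. The only genuinely nontrivial axiom is the mixed associativity (SM1), $(b_1 b_2)\bullet_l h = b_1 \bullet_l (b_2 \bullet_l h)$; the remaining axioms are pure bookkeeping and I would dispatch them quickly.

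First I would observe that for fixed $b \in B$ and $h \in \hom_A(M,N)$, the map $x \mapsto h(x \cdot_B b)$ is again a left $A$-semimodule homomorphism $M \lto N$: it is additive because right multiplication by $b$ is additive on $M$ and $h$ is additive, and it respects the $A$-action because, using that $M$ is an $A$-$B$-bisemimodule (so the left $A$-action and right $B$-action on $M$ commute), $h((a \cdot x)\cdot_B b) = h(a\cdot(x\cdot_B b)) = a\cdot h(x\cdot_B b)$. Hence $b \bullet_l h$ indeed lies in $\hom_A(M,N)$, so $\bullet_l$ is a well-defined map $B \times \hom_A(M,N) \lto \hom_A(M,N)$. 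Next, (SM2) and (SM3) follow by evaluating both sides at an arbitrary $x \in M$: for (SM2), $(b\bullet_l(h_1+h_2))(x) = (h_1+h_2)(x\cdot_B b) = h_1(x\cdot_B b) + h_2(x\cdot_B b)$, which is $(b\bullet_l h_1)(x) + (b\bullet_l h_2)(x)$ by Definition~\ref{homsm}; for (SM3), $((b_1+b_2)\bullet_l h)(x) = h(x\cdot_B(b_1+b_2)) = h(x\cdot_B b_1 + x\cdot_B b_2) = h(x\cdot_B b_1) + h(x\cdot_B b_2)$, using distributivity of the right $B$-action on $M$ and additivity of $h$. For (SM4), $(0_B \bullet_l h)(x) = h(x\cdot_B 0_B) = h(0_M) = 0_N$ and $(b\bullet_l \0)(x) = \0(x\cdot_B b) = 0_N$; for (SM5), $(1_B \bullet_l h)(x) = h(x\cdot_B 1_B) = h(x)$.

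Finally, for (SM1) I would compute, for arbitrary $x \in M$,
\begin{equation*}
((b_1 b_2)\bullet_l h)(x) = h(x\cdot_B(b_1 b_2)) = h((x\cdot_B b_1)\cdot_B b_2) = (b_2 \bullet_l h)(x\cdot_B b_1) = (b_1\bullet_l(b_2\bullet_l h))(x),
\end{equation*}
where the second equality is the associativity axiom (SM1) for the right $B$-semimodule $M$. Note the order reversal here: $b_1 b_2$ acts as ``first $b_2$, then $b_1$'' on the hom-set, which is exactly what one expects since left $\hom$ is contravariant in its first argument --- this is the analogue of the reversed composition appearing in Example~\ref{end} and Theorem~\ref{matrixendo}, and it is worth flagging in the text. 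The right-handed statement with $\bullet_r$ as in~(\ref{homrightq}) is proved by the mirror-image argument, using that now $M$ is a $B$-$A$-bisemimodule and $N$ a right $A$-semimodule; the only point to check is again that $x \mapsto h(b\cdot_B x)$ is a right $A$-semimodule homomorphism, which uses the commuting of the left $B$-action and right $A$-action on $M$.

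I do not expect a serious obstacle: the proof is ``routine verification'' in the literal sense, and the phrase in the excerpt ``technical reasons suggest us to present them along with their specific proofs'' signals that the authors themselves regard this as a short check. The one place to be careful is bookkeeping of the handedness conventions --- which side $b$ multiplies on in $M$ versus the resulting side of the action on $\hom_A(M,N)$, and the order reversal in (SM1) --- so I would state the bisemimodule hypotheses explicitly at the start of each case and keep the evaluation-at-$x$ strategy throughout.
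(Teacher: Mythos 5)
Your proof is correct and follows essentially the same route as the paper's: verify that $b\bullet_l h$ lands in $\hom_A(M,N)$ using the bisemimodule compatibility, then check (SM1)--(SM5) by evaluating at an arbitrary $x\in M$, and dispatch the right-handed case by symmetry. The only cosmetic differences are that the paper writes the idempotent sums as joins and leaves (SM4)--(SM5) as ``obviously verified''; your aside about an ``order reversal'' in (SM1) is harmless, since the computation correctly yields the genuine left-action identity $(b_1b_2)\bullet_l h = b_1\bullet_l(b_2\bullet_l h)$.
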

\begin{proof}
We will consider only the first case, the latter being completely analogous.

Given a scalar $b \in B$ and an $A$-semimodule homomorphism $h$, it is immediate to verify that the map $b \bullet_l h$ sends the bottom element to the bottom element and preserves finite joins. The fact that $b \bullet_l h$ preserves also the right multiplication comes from the fact that $M$ is bisemimodule; indeed, for any $a \in A$ and $x \in M$, we have
$$\begin{array}{l}
(b \bullet_l h)(a \cdot_A x) \\
= h((a \cdot_A x) \cdot_{B} b) = h(a \cdot_A (x \cdot_{B} b)) = a \cdot_A h(x \cdot_{B} b) \\
= a \cdot ((b \bullet_l h)(x)).
\end{array}$$
Therefore $b \bullet_l h \in \hom_A(M,N)$ for all $b \in B$ and $h \in \hom_A(M,N)$. Now let $b, b' \in B$, $h, h' \in \hom_A(M,N)$, and $x \in M$. Conditions (SM4) and (SM5) of Definition \ref{semimodule} are obviously verified while, for (SM1--SM3), we have:
\begin{itemize}
\item[-] $(b \bullet_l (b' \bullet_l h))(x) = h((x \cdot_B b) \cdot_B b') = h(x \cdot_B (bb')) = ((bb') \bullet_l h)(x)$;
\item[-] $(b \bullet_l (h \vee h'))(x) = h(x \cdot_B b) \vee h'(x \cdot_B b) = ((b \bullet_l h) \vee (b \bullet_l h'))(x)$;
\item[-] $((b \vee b') \bullet_l h)(x) = h(x \cdot_B (b \vee b')) = h(x \cdot_B b) \vee h(x \cdot_B b') = ((b \bullet_l h) \vee (b' \bullet_l h))(x)$.
\end{itemize}
The assertion is proved.
\end{proof}

\begin{theorem}\label{isohomtensmq}
Let $A$ and $B$ be idempotent semirings, $M$ a right $A$-semimodule, $N$ an $A$-$B$-bisemimodule and $P$ a right $B$-semimodule. Then, if we consider the right $B$-semimodule $M \tensor_A N$ and the right $A$-semimodule $\hom_{B}(N,P)$, we have
$$\hom_{B}(M \tensor_A N, P) \cong_\sL \hom_A(M,\hom_{B}(N,P)).$$
\end{theorem}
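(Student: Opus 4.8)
The plan is to establish the isomorphism by the standard tensor--hom adjunction argument, but carried out concretely using the description of $M \tensor_A N$ from Theorem~\ref{tensormvsemi} and the bisemimodule structures on the hom-sets from Lemma~\ref{diam}. First I would check that both sides of the claimed isomorphism are genuinely $A$-semimodules (in $\sL$): the right $B$-semimodule $M \tensor_A N$ is a right $B$-semimodule because $N$ is an $A$-$B$-bisemimodule (as discussed after Theorem~\ref{tensormvsemi}), so $\hom_B(M \tensor_A N, P)$ makes sense and, since $M$ is a right $A$-semimodule acting on the first tensor coordinate, $\hom_B(M \tensor_A N, P)$ inherits a left... actually a right $A$-semimodule structure via Lemma~\ref{diam} applied with the $A$-$B$-bisemimodule being $M \tensor_A N$; likewise $\hom_B(N,P)$ is a right $A$-semimodule by Lemma~\ref{diam} (with $N$ an $A$-$B$-bisemimodule), whence $\hom_A(M, \hom_B(N,P))$ is well-defined as a commutative monoid in $\sL$.

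The core construction is the map $\Phi: \hom_B(M \tensor_A N, P) \lto \hom_A(M, \hom_B(N,P))$ defined by $\Phi(h)(x)(y) = h(x \tensor y)$ for $h \in \hom_B(M \tensor_A N, P)$, $x \in M$, $y \in N$. I would verify in turn: (i) for fixed $h,x$, the assignment $y \mapsto h(x \tensor y)$ is a $B$-semimodule homomorphism $N \lto P$ --- additivity follows since $x \tensor (y_1 + y_2) = (x \tensor y_1) \vee (x \tensor y_2)$ in $M \tensor_A N$ (an instance of the second family of relations in $R$) and $h$ preserves joins, while $B$-linearity follows from $x \tensor (y \cdot b) = (x \tensor y) \cdot b$ in the right $B$-semimodule $M \tensor_A N$; (ii) for fixed $h$, the assignment $x \mapsto \Phi(h)(x)$ is an $A$-semimodule homomorphism $M \lto \hom_B(N,P)$, where again additivity uses the first family of relations in $R$ and $A$-linearity uses the third family $x a \tensor y = x \tensor a y$ together with the definition of the $A$-action on $\hom_B(N,P)$ from Lemma~\ref{diam}; (iii) $\Phi$ itself is a monoid homomorphism in $\sL$ (preserves finite joins and the bottom element), which is immediate from the pointwise definitions.

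For the inverse, given $g \in \hom_A(M, \hom_B(N,P))$, the map $(x,y) \mapsto g(x)(y)$ is an $A$-bimorphism $M \times N \lto P$: the three bimorphism conditions of Definition~\ref{bimor} translate exactly into additivity of $g(x)$, additivity of $g$, and the compatibility $g(xa)(y) = g(x)(ay)$, the last of which is precisely $A$-linearity of $g$ read against the $A$-action on $\hom_B(N,P)$. By the universal property of the tensor product (Theorem~\ref{tensormvsemi}), this bimorphism factors uniquely through $\tau$ as a monoid homomorphism $\Psi(g): M \tensor_A N \lto P$; one checks $\Psi(g)$ is $B$-linear on $M \tensor_A N$ using $B$-linearity of each $g(x)$ on $N$ and the fact that the right $B$-action on $M \tensor_A N$ only touches the second coordinate. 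Finally $\Phi$ and $\Psi$ are mutually inverse: $\Phi(\Psi(g))(x)(y) = \Psi(g)(x \tensor y) = g(x)(y)$, and $\Psi(\Phi(h)) = h$ on tensors $x \tensor y$ by construction, hence on all of $M \tensor_A N$ since both are join-preserving and every element is a finite join of tensors. This last uniqueness step --- invoking the universal property to produce $\Psi(g)$ and to conclude $\Psi(\Phi(h)) = h$ --- is the only place where real care is needed; the rest is a bookkeeping exercise in matching the defining relations of $R$ against the bimorphism axioms and the bisemimodule structures, so I expect no genuine obstacle beyond being careful that every map is well-defined on congruence classes and that all the left/right $A$- and $B$-actions are the intended ones.
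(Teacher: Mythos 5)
Your proposal is correct and follows essentially the same route as the paper: your $\Phi(h)(x)(y) = h(x\tensor y)$ is exactly the paper's map $\zeta(h) = h_-$, and your inverse $\Psi$ is obtained, as in the paper, by recognizing $(x,y)\mapsto g(x)(y)$ as an $A$-bimorphism and factoring it through the universal property of $M\tensor_A N$. Your explicit check that $\Psi(g)$ is $B$-linear (not just a monoid homomorphism) is a small point the paper leaves implicit, but it does not constitute a different approach.
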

\begin{proof}
For any $B$-semimodule homomorphism $h$ from $M \tensor_A N$ to $P$ and any $x \in M$, $h$ defines a map
$$h_x: y \in N \lmapsto h(x \tensor y) \in P.$$
Since $h$ is a $B$-semimodule homomorphism, for all $y, y_1, y_2 \in N$ and $b \in B$, we have
$$\begin{array}{l}
h_x\left(y_1 \vee y_2\right) \\
= h\left(x \tensor (y_1 \vee y_2\right) \\
= h\left((x \tensor y_1) \vee (x \tensor y_2)\right) \\
= h(x \tensor y_1) \vee h(x \tensor y_2) \\
= h_x(y_1) \vee h_x(y_2)
\end{array}$$
and
$$h_x(y \cdot b) = h(x \tensor (y \cdot b)) = h((x \tensor y) \cdot b) = h(x \tensor y) \cdot b = h_x(y) \cdot b,$$
so $h_x \in \hom_{B}(N,P)$, for any fixed $x$. Hence we have a map $h_-: x \in M \lmapsto h_x \in \hom_{B}(N,P)$. Since $h$ is also a semimodule homomorphism it is, in particular, also a semilattice homomorphism and therefore $h_-$ is a semilattice homomorphism as well:
$$\begin{array}{l}
h_{x_1 \vee x_2}(y) \\
= h\left(\left(x_1 \vee x_2\right) \tensor y\right) \\
= h\left((x_1 \tensor y) \vee (x_2 \tensor y)\right) \\
= h(x_1 \tensor y) \vee h(x_2 \tensor y) \\
= h_{x_2}(y) \vee h_{x_2}(y)
\end{array},$$
for all $x_1, x_2 \in M$ and $y \in N$. Moreover, if $a \in A$, by (\ref{R}) and (\ref{homrightq}),
$$h_{x \cdot a}(y) = h((x \cdot a) \tensor y) = h(x \tensor (a \cdot y)) = h_x(a \cdot y) = (h_x \bullet_r a)(y),$$
for all $x \in M$, $y \in N$, $a \in A$, so $h_-$ is an $A$-semimodule homomorphism. Besides, we also have
$$\left(h \vee g\right)(x \tensor y) = h(x \tensor y) \vee g(x \tensor y) = h_x(y) \vee g_x(y) = \left(h_x \vee g_x\right)(y),$$
for any $h, g \in \hom_{B}(M \tensor_A N, P)$, for all $x \in M$, for all $y \in N$.

Therefore we have a semilattice homomorphism
\begin{equation*}
\zeta: \ \hom_{B}(M \tensor_A N, P) \ \lto \ \hom_A(M, \hom_{B}(N,P)),
\end{equation*}
defined by $\zeta(h) = h_-$.

Let us show that $\zeta$ has an inverse. If $f \in \hom_A(M,\hom_{B}(N,P))$, then the map $f': (x,y) \in M \times N \lmapsto (f(x))(y) \in P$ is clearly an $A$-bimorphism. Hence there exists a unique homomorphism $h_{f'}: M \tensor_A N \lto P$ such that $h_{f'}(x \tensor y) = f'(x,y) = (f(x))(y)$, for all $x \in M$ and $y \in N$, and clearly $\zeta(h_{f'}) = f$. On the other hand, if $f = \zeta(h)$ with $f \in \hom_A(M,\hom_{B}(N,P)$ and $h \in \hom_{B}(M \tensor_A N, P)$, then the uniqueness of the homomorphism extending the map $f'$ to $M \tensor_A N$ ensures us that $h_{f'} = h$. Then we have the inverse semilattice homomorphism
$$\zeta^{-1}: \ f \in \hom_A(M,\hom_{B}(N,P)) \ \lmapsto \ h_{f'} \in \hom_{B}(M \tensor_A N, P),$$
and the theorem is proved.
\end{proof}

With a completely analogous proof, we have
\begin{theorem}\label{isohomtensmq'}
Let $A$ and $B$ be idempotent semirings, $M$ a $B$-$A$-bisemimodule, $N$ a left $A$-semimodule and $P$ a left $B$-semimodule. Then, if we consider the left $B$-semimodule $M \tensor_A N$ and the left $A$-semimodule $\hom_{B}(M,P)$, we have
$$\hom_{B}(M \tensor_A N, P) \cong_\sL \hom_A(N,\hom_{B}(M,P)).$$
\end{theorem}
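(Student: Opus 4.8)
The plan is to follow the proof of Theorem~\ref{isohomtensmq} essentially verbatim, but with the two tensor factors in the other roles: here it is the \emph{left} factor $M$ that carries the $B$-structure through which we hom into $P$, so the variable to be held fixed when producing an element of $\hom_B(M,P)$ is the one from $N$, not the one from $M$. First I would record the ambient module structures. As observed just before the statement, $M \tensor_A N$ is a left $B$-semimodule via $b \star_l (x \tensor y) = (b \cdot x) \tensor y$; and, by Lemma~\ref{diam} applied with the lemma's $A, B$ taken to be our $B, A$ respectively, $\hom_B(M,P)$ is a left $A$-semimodule with action $(a \bullet_l h)(x) = h(x \cdot a)$ for $a \in A$, $h \in \hom_B(M,P)$, $x \in M$, where $x \cdot a$ is the right $A$-action of $M$. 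Both $\hom_B(M \tensor_A N,P)$ and $\hom_A(N,\hom_B(M,P))$ are semilattices with identity by Definition~\ref{homsm} together with the additive idempotency of all semimodules over idempotent semirings.

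Next I would exhibit the candidate map $\zeta$. Given $h \in \hom_B(M \tensor_A N,P)$ and $y \in N$, set $h^y\colon x \in M \lmapsto h(x \tensor y) \in P$. Since $x \mapsto x \tensor y$ distributes over finite joins in its coordinate and $(b \cdot x)\tensor y = b \star_l (x \tensor y)$, and $h$ is a $B$-semimodule homomorphism, one checks $h^y \in \hom_B(M,P)$. Then $h^-\colon y \in N \lmapsto h^y \in \hom_B(M,P)$ preserves finite joins (distributivity of $\tensor$ in the second coordinate) and is $A$-linear: for $a \in A$ and $x \in M$, the relations defining $\equiv_R$ in (\ref{R}) give $h^{a\cdot y}(x) = h(x \tensor (a\cdot y)) = h((x\cdot a)\tensor y) = h^y(x \cdot a) = (a \bullet_l h^y)(x)$, so $h^{a\cdot y} = a \bullet_l h^y$ and hence $h^- \in \hom_A(N,\hom_B(M,P))$. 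A pointwise computation shows $(h \vee g)^- = h^- \vee g^-$ and $\0^- = \0$, so $\zeta\colon h \lmapsto h^-$ is a morphism $\hom_B(M \tensor_A N,P) \to \hom_A(N,\hom_B(M,P))$ in $\sL$.

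Then I would construct the inverse. Given $f \in \hom_A(N,\hom_B(M,P))$, define $f'\colon (x,y) \in M \times N \lmapsto (f(y))(x) \in P$. Additivity of $f'$ in $x$ comes from $f(y) \in \hom_B(M,P)$, additivity in $y$ from $f$ being additive (joins in $\hom_B(M,P)$ are computed pointwise), and the middle bimorphism identity is exactly $f'(x \cdot a, y) = (f(y))(x \cdot a) = (a \bullet_l f(y))(x) = (f(a \cdot y))(x) = f'(x, a \cdot y)$, using $f(a \cdot y) = a \bullet_l f(y)$ and the formula for $\bullet_l$ from Lemma~\ref{diam}. Thus $f'$ is an $A$-bimorphism, and the universal property of the tensor product, in the concrete form of Theorem~\ref{tensormvsemi}, yields a unique monoid homomorphism $h_{f'}\colon M \tensor_A N \to P$ with $h_{f'}(x \tensor y) = (f(y))(x)$; checking $h_{f'}(b \star_l (x\tensor y)) = (f(y))(b\cdot x) = b \cdot h_{f'}(x \tensor y)$ on tensors and extending by join-preservation shows $h_{f'} \in \hom_B(M \tensor_A N,P)$. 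Finally $\zeta(h_{f'}) = f$ by construction, while for $h \in \hom_B(M\tensor_A N,P)$ the map $(\zeta(h))'$ is just $h$ precomposed with the universal bimorphism, so the uniqueness clause forces $h_{(\zeta(h))'} = h$. Hence $\zeta$ is a bijection, and being a morphism in $\sL$ it is an isomorphism in $\sL$, which is the claim.

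The verifications that $h^y$, $h^-$ and $f'$ land in the right classes, and that $h_{f'}$ is $B$-linear, are entirely parallel to those in the proof of Theorem~\ref{isohomtensmq}, and I expect none of them to cause trouble. The one point that genuinely requires care — and the only real obstacle — is keeping the sidedness straight: one must verify that the middle identity for $f'$ matches \emph{precisely} the left $A$-action $\bullet_l$ on $\hom_B(M,P)$ supplied by Lemma~\ref{diam}, and, dually, that the $B$-action on $M \tensor_A N$ sits on the $M$-slot, which is exactly what forces one to vary $x$ and hold $y$ fixed when forming elements of $\hom_B(M,P)$. Once these bookkeeping choices are pinned down, the argument of Theorem~\ref{isohomtensmq} transfers word for word.
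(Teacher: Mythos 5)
Your proposal is correct and is precisely the ``completely analogous proof'' that the paper itself invokes for Theorem~\ref{isohomtensmq'}: you transport the argument of Theorem~\ref{isohomtensmq} by fixing $y\in N$ instead of $x\in M$, and your bookkeeping of the sidedness (the left $B$-action on $M\tensor_A N$ living on the $M$-slot, and the left $A$-action $\bullet_l$ on $\hom_B(M,P)$ coming from Lemma~\ref{diam} with the roles of $A$ and $B$ interchanged) is exactly right. No gaps; nothing further is needed.
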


\begin{lemma}\label{homqm}
Let $A$ be an idempotent semiring and $M$ be an $A$-semimodule. Then, considering $A$ as a left $A$-semimodule, we have
\begin{equation*}
\hom_A(A,M) \cong_\sL M.
\end{equation*}
\end{lemma}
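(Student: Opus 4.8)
The plan is to reproduce, in the additively idempotent setting, the elementary module-theoretic isomorphism $\hom_R(R,M)\cong M$ given by evaluation at the identity. Regard $A$ as the free cyclic left $A$-semimodule $A\cdot 1$. First I would introduce the evaluation map
\[
\varepsilon:\ h\ \in\ \hom_A(A,M)\ \lmapsto\ h(1)\ \in\ M .
\]
By Definition~\ref{homsm} the semilattice operation on $\hom_A(A,M)$ is the pointwise one and its identity $\0$ is the constant $\bot$-map; hence $\varepsilon(h\vee g)=h(1)\vee g(1)=\varepsilon(h)\vee\varepsilon(g)$ and $\varepsilon(\0)=\bot$, so $\varepsilon$ is a morphism in $\sL$.

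Next I would produce an explicit inverse. For $m\in M$ let $\varphi_m:A\lto M$ be given by $\varphi_m(a)=a\cdot m$; this is precisely the homomorphism associated with $m$ in Proposition~\ref{semiend} (equivalently, the value at $m$ of the map $h_{\id_M}$ of Proposition~\ref{freemod}, $A$ being free on $\{1\}$), so $\varphi_m\in\hom_A(A,M)$ by axioms (SM1)--(SM4) of Definition~\ref{semimodule}. The assignment $m\lmapsto\varphi_m$ preserves finite joins, since $\varphi_{m\vee m'}(a)=a\cdot(m\vee m')=(a\cdot m)\vee(a\cdot m')=(\varphi_m\vee\varphi_{m'})(a)$ by (SM2), and it sends $\bot$ to $\0$ by (SM4); thus it too is an $\sL$-morphism.

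Finally I would verify that the two maps are mutually inverse. On one side, $\varepsilon(\varphi_m)=\varphi_m(1)=1\cdot m=m$ by (SM5). On the other side, for $h\in\hom_A(A,M)$ and every $a\in A$ we have $\varphi_{\varepsilon(h)}(a)=a\cdot h(1)=h(a\cdot 1)=h(a)$, using that $h$ is an $A$-semimodule homomorphism and that $1$ is the multiplicative identity of $A$; hence $\varphi_{\varepsilon(h)}=h$. Therefore $\varepsilon$ is a bijective $\sL$-morphism whose inverse is an $\sL$-morphism, i.e. an isomorphism in $\sL$, which gives $\hom_A(A,M)\cong_\sL M$. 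There is no genuinely hard step here: the argument is the verbatim transcription of the ring-module fact, and the only point deserving attention is that the semilattice structure on the hom-set is the pointwise one, so that both $\varepsilon$ and $m\mapsto\varphi_m$ automatically respect joins and the bottom element --- which is ensured by Definition~\ref{homsm} together with the fact, recalled in Section~\ref{mvsemirings}, that every semimodule over an additively idempotent semiring is a semilattice with identity.
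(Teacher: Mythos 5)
Your proof is correct and follows essentially the same route as the paper's: evaluation at $1$ in one direction, the map $m \mapsto (a \mapsto a\cdot m)$ in the other, with the verification that both are $\sL$-morphisms and mutually inverse. The only (immaterial) difference is that you spell out the axiom citations a bit more explicitly.
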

\begin{proof}
First of all we observe that, for any fixed $x \in M$, the map $g_x: a \in A \lto a \cdot x \in M$ is trivially an $A$-semimodule homomorphism. Then we can consider the map $\varphi: x \in M \lto g_x \in \hom_A(A,M)$, which is clearly a semilattice homomorphism.

Let us consider also the map $\psi: f \in \hom_A(A,M) \lto f(1) \in M$. Again, it is immediate to verify that $\psi$ is a semilattice homomorphism. But we also have:
$$((\varphi \circ \psi)(f))(a) = (\varphi(f(1)))(a) = g_{f(1)}(a) = a \cdot f(1) = f(a),$$
for all $f \in \hom_A(A,M)$ and $a \in A$, and
$$(\psi \circ \varphi)(x) = \psi(g_x) = g_x(1) = 1 \cdot x = x,$$
for all $x \in M$.

Thus $\varphi \circ \psi = \id_{\hom_A(A,M)}$ and $\psi \circ \varphi = \id_M$, i.~e. $\varphi$ is an isomorphism whose inverse is $\psi$, and the thesis follows.
\end{proof}

As a consequence of the previous result, the $A$-semimodule structure defined on $\hom_A(A,M)$ by Lemma~\ref{diam} is isomorphic to $M$.

\begin{lemma}\label{indmod}
Let $A$ and $B$ be idempotent semirings and $h: A \lto B$ a semiring homomorphism. Then $h$ induces a structure of $A$-semimodule on any $B$-semimodule.

In particular, $h$ induces structures of $A$-bisemimodule, $B$-$A$-bisemimodule and $A$-$B$-bisemimodule on $B$ itself.
\end{lemma}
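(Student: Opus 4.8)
The plan is to use the standard ``restriction of scalars'' construction, already anticipated in the footnote to Proposition~\ref{semiend}. Given a left $B$-semimodule $\la M, +, 0 \ra$ with scalar multiplication $\cdot\colon B \times M \lto M$, I would define an external operation with coefficients in $A$ by
$$a \cdot_h x \bydef h(a) \cdot x, \qquad a \in A,\ x \in M,$$
and check that $\la M, +, 0, \cdot_h\ra$ satisfies (SM1)--(SM5) of Definition~\ref{semimodule}. Each axiom reduces to the corresponding axiom for the $B$-action together with the fact that $h$ preserves the semiring operations: (SM1) uses $h(ab) = h(a)h(b)$; (SM3) uses $h(a+b) = h(a)+h(b)$; (SM4) uses $h(0_A) = 0_B$; (SM5) uses $h(1_A) = 1_B$; and (SM2) needs nothing about $h$. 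The case of right $B$-semimodules is identical, defining $x \cdot_h a = x \cdot h(a)$ and using that in $\End_\M$-type computations the multiplication is read in the reverse order. Note that idempotency of the addition plays no role here, so the construction is in fact available for arbitrary semirings.

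For the ``in particular'' clause, recall that $B$ is simultaneously a left and a right $B$-semimodule over itself, the two scalar multiplications being the semiring multiplication of $B$. Applying the construction above on the left gives $B$ the structure of a left $A$-semimodule via $a \cdot_l b = h(a)\,b$, and on the right the structure of a right $A$-semimodule via $b \cdot_r a = b\,h(a)$; to see that these make $B$ an $A$-bisemimodule I would check the associative law $(a \cdot_l b) \cdot_r a' = a \cdot_l (b \cdot_r a')$, which upon expansion reads $(h(a)\,b)\,h(a') = h(a)\,(b\,h(a'))$ and is therefore just associativity of multiplication in $B$. The $B$-$A$-bisemimodule structure is obtained by keeping the native left $B$-action on $B$ together with the induced right $A$-action, the mixed law becoming $(b'b)\,h(a) = b'\,(b\,h(a))$; the $A$-$B$-bisemimodule structure is the mirror image, with mixed law $(h(a)\,b)\,b' = h(a)\,(b\,b')$.

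Since all of these verifications are routine, there is no genuine obstacle; the only point deserving a moment's care is, in each of the three bisemimodule cases, to pair the two scalar actions so that the mixed associativity law is precisely the instance of associativity of multiplication in $B$ and does not secretly require commutativity of $B$ (which is not assumed). As the displays above show, this is indeed the case.
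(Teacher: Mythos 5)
Your proof is correct and follows exactly the paper's argument: the paper defines $a \cdot_h x = h(a)\cdot x$ (restriction of scalars, as already anticipated in the footnote to Proposition~\ref{semiend}), leaves the axiom checks as routine, and derives the second clause from $B$ being a bisemimodule over itself. Your more explicit verification of (SM1)--(SM5) and of the mixed associativity laws, and your observation that idempotency is not needed, are all consistent with the paper's intent.
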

\begin{proof}
Let $N$ be a $B$-semimodule with scalar multiplication $\cdot$. It is easy to verify that
\begin{equation}\label{starh}
\cdot_h: (a, x) \in A \times N \lmapsto h(a) \cdot x \in N
\end{equation}
makes $N$ into an $A$-semimodule, henceforth denoted by $N_h$. Since $B$ is a bisemimodule over itself, the second part of the assertion follows immediately.
\end{proof}

The operation performed in (\ref{starh}) is well-known in the theory of ring modules as \emph{restricting the scalars along $h$}. In fact it defines a functor
\begin{equation}\label{subh}
\begin{array}{cccc}
H: & B\Mod & \lto & A\Mod \\
& N & \lmapsto & N_h
\end{array}
\end{equation}
having both a right and a left adjoint, as shown by the following result.

\begin{theorem}\label{adjfunct}
The functor $H$ defined in (\ref{subh}) is an embedding and has both a left and a right adjoint.
\end{theorem}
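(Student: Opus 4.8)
The plan is to transplant the classical ring-theoretic picture: restriction of scalars along $h$ has \emph{extension of scalars} $B\tensor_A{-}$ as a left adjoint and \emph{coextension of scalars} $\hom_A(B,{-})$ as a right adjoint, while the embedding assertion is just faithfulness. The embedding part is immediate: $H$ leaves the underlying commutative monoid of a semimodule and the underlying set map of a homomorphism untouched, merely contracting the scalar action along $h$; hence it is faithful, which is the content of the claim. The real work is the two adjunctions, and I would organise it as follows.

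For the left adjoint, recall from Lemma~\ref{indmod} that $h$ makes $B$ a $B$-$A$-bisemimodule, so by the $\star_l$-construction following Theorem~\ref{tensormvsemi}, for every left $A$-semimodule $L$ the tensor product $B\tensor_A L$ is canonically a left $B$-semimodule; with the obvious action on morphisms ($g\mapsto\id_B\tensor g$) this gives a functor $F\colon A\Mod\lto B\Mod$. To prove $F\dashv H$ I would feed the $B$-$A$-bisemimodule $B$ into Theorem~\ref{isohomtensmq'} (in the role of the bisemimodule $M$ there), obtaining $\hom_B(B\tensor_A L,P)\cong_\sL\hom_A(L,\hom_B(B,P))$ for every left $B$-semimodule $P$; then I would invoke Lemma~\ref{homqm} to identify $\hom_B(B,P)$ with $P$, checking by the short computation $f\mapsto f(1)$ that this identification also carries the left $A$-semimodule structure that Lemma~\ref{diam} puts on $\hom_B(B,P)$ onto that of $H(P)$. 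Composing yields a bijection $\hom_B(F(L),P)\cong\hom_A(L,H(P))$, after which it remains to verify naturality in $L$ and in $P$.

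For the right adjoint, Lemma~\ref{indmod} also makes $B$ an $A$-$B$-bisemimodule, so Lemma~\ref{diam} equips $\hom_A(B,L)$ with a left $B$-semimodule structure for every left $A$-semimodule $L$, defining a functor $G\colon A\Mod\lto B\Mod$. I would produce the adjunction $H\dashv G$ directly: the assignment $\phi\mapsto\bigl(p\mapsto\phi(p)(1_B)\bigr)$ sends $\hom_B(P,\hom_A(B,L))$ into $\hom_A(H(P),L)$, with inverse $\psi\mapsto\bigl(p\mapsto(b\mapsto\psi(b\cdot p))\bigr)$; both assignments preserve the relevant ($A$- or $B$-) operations and are mutually inverse by one-line checks, and naturality in $P$ and $L$ is routine. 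Alternatively one can assemble this isomorphism from Theorem~\ref{isohomtensmq} together with the easy tensor analogue $B\tensor_B P\cong H(P)$ of Lemma~\ref{homqm}.

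The genuinely delicate point is not any single verification but the bookkeeping. Since $A$ and $B$ need not be commutative, one must keep strict track of which side each scalar acts on, so that all the tensor products, hom-semimodules and bisemimodule structures above are exactly the ones to which Lemma~\ref{indmod}, Lemma~\ref{diam}, Theorem~\ref{isohomtensmq} and Theorem~\ref{isohomtensmq'} apply; and one must upgrade the object-wise isomorphisms those results supply to honest natural transformations in both variables before concluding $F\dashv H\dashv G$.
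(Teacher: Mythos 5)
Your proposal is correct and follows essentially the same route as the paper: the embedding claim is dispatched by noting that $H$ changes neither underlying sets nor underlying maps, the left adjoint is $B\tensor_A{-}$ obtained from Theorem~\ref{isohomtensmq'} together with Lemma~\ref{homqm}, and the right adjoint is $\hom_A(B_h,{-})$ with the $B$-structure of Lemma~\ref{diam}. Your direct unit--counit computation for the right adjunction is just an unwinding of the paper's argument, and the alternative you mention (tensor--hom adjunction plus $B\tensor_B N\cong N_h$) is exactly what the paper does.
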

\begin{proof}
The fact that $H$ is an embedding, namely, that $H$ is injective on morphisms is obvious. Indeed $H$ neither affects the underlying set of each object, nor the underlying map of each morphism, hence different morphisms in $B\Mod$ are sent to different morphisms in $A\Mod$ by $H$.

Now, for any $M \in A\Mod$, viewing $B$ as a $B$-$A$-bisemimodule, we can construct the tensor product $B \tensor_{A} M$ which is a left $B$-semimodule. We claim that
\begin{equation*}
\begin{array}{cccc}
H_l: & A\Mod & \lto & B\Mod \\
			& M			& \lmapsto & B \tensor_{A} M
\end{array}
\end{equation*}
is the left adjoint of $H$. In order to prove our claim, we need to show that, for any $A$-semimodule $M$ and any $B$-semimodule $N$, there exists a natural bijection between $\hom_{B}(B \tensor_{A} M, N)$ and $\hom_A(M,N_h)$. The first hom-set is isomorphic, as a semilattice, to $\hom_A(M, \hom_{B}(B,N))$, by Theorem~\ref{isohomtensmq'}; on the other hand, by Lemma~\ref{homqm}, $\hom_{B}(B,N) \cong_{\sL} N$ and such an isomorphism is an $A$-semimodule isomorphism (with $N_h$ instead of $N$) for how the $A$-semimodule structure is induced on $\hom_{B}(B,N)$. Hence the two hom-sets are isomorphic semilattices, and $H_l$ is the left adjoint of $H$. 

The right adjoint of $H$ is defined by
\begin{equation*}
\begin{array}{cccc}
H_r: & A\Mod & \lto & B\Mod \\
			 & M & \lmapsto & \hom_A(B_h,M),
\end{array}
\end{equation*}
where the left $B$-semimodule structure on $\hom_A(B_h,M)$ is the one introduced in Lemma~\ref{diam}. This part of the proof is analogous to the case of $H_l$. Indeed, for any $A$-semimodule $M$ and any $B$-semimodule $N$, by Theorem~\ref{isohomtensmq'}, $\hom_{B}(N,H_r(M))$ --- namely $\hom_{B}(N, \hom_A(B_h,M))$ --- is isomorphic, as a semilattice, to $\hom_A((B \tensor_{B} N)_h, M)$; on the other hand, since every tensor $b \tensor y \in B \tensor_{B} N$ can be rewritten in the form $1 \tensor b \cdot y$, such a tensor product is easily seen to be isomorphic to $N_h$. Therefore $\hom_{B}(N,H_r(M))$ is a semilattice isomorphic to $\hom_A(N_h, M)$ and the theorem is proved.
\end{proof}

We recall the following category-theoretic concepts from \cite[Definitions 6.1, 6.5]{cats}.
\begin{definition}
Let $\cat C$ and $\cat C'$ be categories, and $F,G: \cat C \lto \cat C'$ be functors. A \emph{natural transformation} $\tau$ from $F$ to $G$ is a function that assigns to each $\cat C$-object $X$ a $\cat C'$-morphism $\tau_X: FX \lto GX$ in such a way that, for each $\cat C$-morphism $f: X \lto Y$, $Gf \circ \tau_X = \tau_Y \circ Ff$.

A natural transformation $\tau$ such that $\tau_X$ is a $\cat C'$-isomorphism for each $\cat C$-object $X$ is called a \emph{natural isomorphism}.
\end{definition}

\begin{theorem}\label{emb}
Let $A$ and $B$ be idempotent semirings and $h: A \lto B$ an onto semiring homomorphism. Then the functor $H$ defined in (\ref{subh}) is a full embedding.

Moreover the left adjoint $H_l$ is, up to a natural isomorphism, the left inverse of $H$, that is, $H_l \circ H$ and the identity functor $\ID_{B\Mod}$ are naturally isomorphic.
\end{theorem}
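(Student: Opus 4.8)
The plan is to obtain both assertions by feeding the surjectivity of $h$ into what Theorem~\ref{adjfunct} already gives; concretely, surjectivity will buy \emph{fullness} of $H$ and the \emph{collapse} of $B\tensor_A N_h$ back onto $N$.

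First I would treat \textbf{fullness}. By Theorem~\ref{adjfunct} the functor $H$ of (\ref{subh}) is already a faithful embedding, so only fullness is at issue: every $A$-semimodule homomorphism $f\colon N_h\lto N'_h$ between restrictions of $B$-semimodules is in fact $B$-linear. Indeed, given $b\in B$ pick $a\in A$ with $h(a)=b$; then for all $y\in N$,
$$f(b\cdot y)=f(h(a)\cdot y)=f(a\cdot_h y)=a\cdot_h f(y)=h(a)\cdot f(y)=b\cdot f(y).$$
So $f=H(g)$ for the $g\in\hom_B(N,N')$ with the same underlying map, and $H$ is full.

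Next I would build the \textbf{natural isomorphism} $H_l\circ H\cong\ID_{B\Mod}$. Fix a $B$-semimodule $N$; then $H_l(H(N))=B\tensor_A N_h$, where $B$ carries the $B$-$A$-bisemimodule structure induced by $h$ (Lemma~\ref{indmod}) and $N_h$ is $N$ with $A$-action $a\cdot_h y=h(a)\cdot y$. I would define $\varepsilon_N\colon B\tensor_A N_h\lto N$ on tensors by $b\tensor y\lmapsto b\cdot y$: this is well defined since $(b,y)\mapsto b\cdot y$ is an $A$-bimorphism $B\times N_h\lto N$ in the sense of Definition~\ref{bimor} (condition (iii) being exactly $(b\,h(a))\cdot y=b\cdot(h(a)\cdot y)$, i.e. associativity of the $B$-action), so it factors through the universal bimorphism, and the resulting monoid map is routinely checked to respect the left $B$-action. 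The candidate inverse is $\eta_N\colon y\lmapsto 1\tensor y$, manifestly a semilattice homomorphism; $\varepsilon_N\circ\eta_N=\id_N$ holds at once since $1\cdot y=y$.

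The crux — and the only place where surjectivity is truly needed — is the identity $b\tensor y=1\tensor(b\cdot y)$ in $B\tensor_A N_h$, from which $\eta_N\circ\varepsilon_N=\id$ follows because every element of $B\tensor_A N_h$ is a finite join of tensors and both composites preserve joins. Writing $b=h(a)$, the third family of generators of $\equiv_R$ in (\ref{R}) takes, for $B\tensor_A N_h$, the form $\{(x\,h(a),y)\}\equiv_R\{(x,h(a)\cdot y)\}$ for $x\in B$, and specializing to $x=1$ gives $b\tensor y=1\tensor(b\cdot y)$; equivalently, once $h$ is onto the congruence of Theorem~\ref{tensormvsemi} defining $B\tensor_A N_h$ coincides, generator for generator, with the one defining $B\tensor_B N$, so $B\tensor_A N_h=B\tensor_B N\cong N$ as left $B$-semimodules via the standard isomorphism recalled in the proof of Theorem~\ref{adjfunct}. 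Finally I would verify naturality: for $g\in\hom_B(N,N')$, $H_l(H(g))(b\tensor y)=b\tensor g(y)$ and $\varepsilon_{N'}(b\tensor g(y))=b\cdot g(y)=g(b\cdot y)=g(\varepsilon_N(b\tensor y))$, so the naturality square commutes on tensors and hence everywhere, making $\{\varepsilon_N\}_N$ (equivalently $\{\eta_N\}_N$) a natural isomorphism $H_l\circ H\To\ID_{B\Mod}$. I expect no real obstacle beyond the collapse $b\tensor y=1\tensor(b\cdot y)$: without surjectivity $B\tensor_A N_h$ may properly dominate $N$, and everything else is bookkeeping with the relevant universal properties.
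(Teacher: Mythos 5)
Your proposal is correct and follows essentially the same route as the paper: fullness is obtained by lifting each $b\in B$ to some $a\in A$ with $h(a)=b$ and checking $B$-linearity exactly as in the paper's proof, and the natural isomorphism $H_l\circ H\cong\ID_{B\Mod}$ is the unit $y\mapsto 1\tensor y$, which the paper asserts is ``immediate to verify'' to be an isomorphism when $h$ is onto. You merely supply the details the paper omits, correctly isolating the collapse relation $b\tensor y=1\tensor(b\cdot y)$ (from the third generator family of (\ref{R}) together with surjectivity) as the point where the hypothesis is used.
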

\begin{proof}
We want to prove that the hom-set restrictions of $H$ are surjective, namely, that for any $M, N \in B\Mod$ and $g \in \hom_A(M_h,N_h)$ there exists $f \in \hom_B(M,N)$ such that $Ff = g$. Actually, since $Ff = f$ for all $f \in \hom_B(M,N)$, what we need to prove is that $g$ is also a $B$-semimodule homomorphism and, since $g$ is in particular a semilattice homomorphism, we just need to prove that the it preserves the scalar multiplication with coefficients in $B$. So let $b \in B$; by the surjectivity of $h$ there exists $a \in A$ such that $h(a) = b$. Hence $g(b \cdot x) = g(h(a) \cdot x) = g(a \cdot_h x) = a \cdot_h g(x) = h(a) \cdot g(x) = b \cdot g(x)$, for all $x \in M$, and therefore $g \in \hom_B(M,N)$.

It is immediate to verify that, under the hypothesis that $h$ is onto, the map $x \in M_h \lmapsto e \tensor x \in B \tensor_A M_h$ is a $B$-semimodule isomorphism for all $M \in B\Mod$, hence $H_l \circ H$ is naturally isomorphic to the identity functor $\ID_{B\Mod}$.
\end{proof}

Before showing an interesting application of the previous results of this section to MV-semimodules, we recall that the category $\MV$ of MV-algebras, with MV-algebra homomorphisms, is equivalent to $\ell\Gab_u$, namely, the category of lattice-ordered Abelian groups with a distinguished strong order unit whose morphisms are lattice-ordered group homomorphisms that preserve the distinguished strong unit \cite{mun}. The two functors that form such an equivalence are usually denoted by $\Gamma: \ell\Gab_u \lto \MV$ and $\Xi: \MV \lto \ell\Gab_u$; while the former is very easy to present (basically it is the construction presented in Example \ref{0u} where $u$ is the distinguished strong unit) and shall be recall hereafter, the latter requires more work and the details of its construction are not really relevant to our discussion. However, a detailed yet relatively concise presentation of Mundici categorical equivalence is presented in \cite[Chapter 2]{mvbook}. 

Let $\la G, +, -, 0, \vee, \wedge, u \ra$ be an Abelian $u\ell$-group with distinguished strong order unit $u$. Then the MV-algebra $\Gamma(G)$ is $\la [0,u], \oplus, ^*, 0\ra$ with $x \oplus y \bydef (x + y) \wedge u$ and $x^* \bydef u - x$ for all $x, y \in [0,u]$. The mapping $\Gamma: G \in \ell\Gab_u \lmapsto \Gamma(G) \in \MV$ is a full, faithful and isomorphism-dense functor.

As we observed in Example \ref{l-group} the category $\ell\Gab$ is isomorphic to the one of idempotent semifields; therefore, for any idempotent $u$-semifield $\la F, \wedge, +, -, \top, 0, u \ra$, we obtain an MV-algebra by applying the $\Gamma$ functor to the Abelian $u\ell$-group $\la F\setminus\{\top\}, +, -, 0, \vee, \wedge, u\ra$, with $\vee$ defined by means of $\wedge$ and $-$. 
In what follows, 
given an idempotent $u$-semifield $\la F, \wedge, +, -, \top, 0, u \ra$, with a slight abuse of notation we shall denote by $\Gamma(F)$ the MV-algebra $\la [0,u], \oplus, ^*, 0\ra$, and by $F_+$ the positive cone of $F$, namely, the set $\{x \in F \mid x \geq 0\}$ which is obviously a subsemiring of $F$. 

Now, using the results we proved so far in the present section, we will show that, for any idempotent $u$-semifield $F$, the $\Gamma$ functor induces a full embedding of the category $\Gamma(F)\Mod$ into the category $F_+\Mod$. As a first step, let us show that the functor $\Gamma$ defines a canonical onto semiring homomorphism from the positive cone of any idempotent $u$-semifield $F$ to its corresponding MV-algebra $\Gamma(F)$. 
\begin{lemma}\label{Gamma}
Let $F$ be an idempotent $u$-semifield. Then the function
\begin{equation*}
\begin{array}{cccc}
\gamma: & F_+ & \lto & \Gamma(F)\wr \\
			  & a 	& \lmapsto & a \wedge u
\end{array}
\end{equation*}
is a semiring onto homomorphism.
\end{lemma}
\begin{proof}
It is immediate to verify that $\gamma(0) = 0$ and $\gamma(\top) = u$. It is immediate to see that $\gamma$ preserves the $\wedge$ operation: for all $a,b \in F_+$, $\gamma(a \wedge b) = (a \wedge b) \wedge u = a \wedge b \wedge u \wedge u = (a \wedge u) \wedge (b \wedge u) = \gamma(a) \wedge \gamma(b)$.

For what concerns the sum, we have:
$$\begin{array}{l}
\gamma(a) \oplus \gamma(b) = ((a \wedge u) +(b \wedge u)) \wedge u \\ = ((a +(b \wedge u)) \wedge (u +(b \wedge u)) \wedge u \\ = (a + b) \wedge (a + u) \wedge (u + b) \wedge (u + u) \wedge u \\ = (a + b) \wedge u \\ = \gamma(a + b).
\end{array}$$

Last, the fact that the map $\gamma$ is surjective is obvious since $a \in \gamma^{-1}(a)$ for all $a \in [0,u]$.
\end{proof}

By Lemmas~\ref{indmod} and \ref{Gamma} and Theorem~\ref{adjfunct} the homomorphism $\gamma$ defines an adjoint and coadjoint functor
\begin{equation}\label{G}
G: \Gamma(F)\Mod \lto F_+\Mod
\end{equation}
for any idempotent $u$-semifield $F$. Combining Theorem \ref{emb} with Lemma \ref{Gamma} we obtain the following immediate result.
\begin{corollary}\label{mvemb}
The functor $G$ defined in (\ref{G}) is a full embedding and its left adjoint $G_l$ is its left inverse.
\end{corollary}

It is interesting to notice that the functor $G_l$ somehow ``truncates'' $F_+$-semimodules to $\Gamma(F)$-semimodules similarly to how $\Gamma$ truncates idempotent $u$-semifields to MV-algebras. We explain this statement starting from free semimodules.

Let $F$ be an idempotent $u$-semifield, $A = \Gamma(F)$ and $F_+^{(X)}$ be the free $F_+$-se\-mi\-mod\-ule over a given set $X$. Moreover, let us denote by $\chi_x$ and $\chi_x'$ the maps defined, respectively, in (\ref{chimv}) and (\ref{chisf}).

Let us consider the function
$$f: \ (a, \alpha) \in A \times F_+^{(X)} \ \lmapsto \ a \oplus \bigwedge_{x \in \supp\alpha} \gamma(\alpha(x)) \oplus \chi_x' \in A^{(X)},$$
and let $\alpha, \alpha' \in F_+^{(X)}$ and $a,a'  \in A$. We have:
$$\begin{array}{l}
f(a \wedge a', \alpha) \\
= (a \wedge a') \oplus \left(\bigwedge\limits_{x \in \supp\alpha} \gamma(\alpha(x)) \oplus \chi_x'\right) \\
= \left(a \oplus \bigwedge\limits_{x \in \supp\alpha} \gamma(\alpha(x)) \oplus \chi_x'\right) \wedge \left(a' \oplus \bigwedge\limits_{x \in \supp\alpha} \gamma(\alpha(x)) \oplus \chi_x'\right) \\
= f(a, \alpha) \wedge f(a', \alpha),
\end{array}$$
similarly $f(a, \alpha \wedge \alpha') = f(a,\alpha) \wedge f(a,\alpha')$. Now let $b \in F_+$; if $b \neq \top$ then $\supp \alpha = \supp(b + \alpha)$ and we have
$$\begin{array}{l}
f(a, b + \alpha) \\
= a \oplus \bigwedge\limits_{x \in \supp\alpha)} \gamma(b+\alpha(x)) \oplus \chi_x' \\
= a \oplus \bigwedge\limits_{x \in \supp\alpha)} \gamma(b) \oplus \gamma(\alpha(x)) \oplus \chi_x' \\
= a \oplus \gamma(b) \oplus \bigwedge\limits_{x \in \supp\alpha} \gamma(\alpha(x)) \oplus \chi_x' \\
= f(a \oplus \gamma(b), \alpha).
\end{array}$$
If $b = \top$ then $f(a, \top + \alpha) = a \oplus \u = \u = f(a \oplus \u, \alpha) = f(a \oplus \g(\top), \alpha)$.

So $f$ is an ${F_+}$-bimorphism (see Definition \ref{bimor}), hence it defines a semilattice homomorphism $\phi: A \tensor_{F_+} {F_+}^{(X)} \lto A^{(X)}$ which is actually an $A$-semimodule homomorphism for the commutativity of $A$. Let us now consider the map
$$\begin{array}{llll}
\psi: & A^{(X)} & \lto 		 & A \tensor_{F_+} {F_+}^{(X)} \\
			& \alpha	& \lmapsto & 0 \tensor \bigwedge\limits_{x \in \supp\alpha} \alpha(x) + \chi_x
\end{array}.$$
It is self-evident that $\phi \circ \psi = \id_{A^{(X)}}$; on the other hand, for any tensor $a \tensor \alpha \in A \tensor_{F_+} {F_+}^{(X)}$,
$$\begin{array}{l}
a \tensor \alpha = a \tensor \left(\bigwedge\limits_{x \in \supp\alpha} \alpha(x) + \chi_x\right) \\
= \bigwedge\limits_{x \in \supp\alpha} (a \tensor  (\alpha(x) + \chi_x)) \\
= \bigwedge\limits_{x \in \supp\alpha} ((a \cdot_\g \alpha(x)) \tensor \chi_x) \\
= \bigwedge\limits_{x \in \supp\alpha} ((a \oplus \g(\alpha(x))) \tensor \chi_x)
\end{array}$$
and
$$\begin{array}{l}
(\psi \circ \phi)(a \tensor \alpha) = (\psi \circ \phi)\left(\bigwedge\limits_{x \in \supp\alpha}(a \oplus \gamma(\alpha(x))) \tensor \chi_x\right) \\
= \bigwedge\limits_{x \in \supp\alpha}\left(a \oplus \gamma(\alpha(x)) \oplus (\psi \circ \phi)(0 \tensor \chi_x)\right) \\
= \bigwedge\limits_{x \in \supp\alpha} \left(a \oplus \gamma(\alpha(x)) \oplus \psi(0 \tensor \chi_x')\right) \\
= \bigwedge\limits_{x \in \supp\alpha}(a \oplus \gamma(\alpha(x))) \oplus \left(0 \tensor \chi_x\right) \\
= \bigwedge\limits_{x \in \supp\alpha} (a \oplus \gamma(\alpha(x))) \tensor \chi_x,
\end{array}$$
whence $\psi \circ \phi = \id_{A \tensor_{F_+} {F_+}^{(X)}}$. It follows that $A^{(X)}$ and $A \tensor_{F_+} {F_+}^{(X)}$ are isomorphic.

In the general case, if $M$ is an ${F_+}$-semimodule and $X$ is a set of generators for it, then $M$ is homomorphic image of ${F_+}^{(X)}$, that is there exists an onto homomorphism $f: {F_+}^{(X)} \lto M$. So, as in the previous case, we can define the map $f': (a, \alpha) \in A \times {F_+}^{(X)} \lmapsto a \tensor f(\alpha) \in A \tensor_{F_+} M$ which is easily seen to be an onto ${F_+}$-bimorphism and, therefore, induces an onto $A$-semimodule homomorphism $\varphi': A \tensor_{F_+} {F_+}^{(X)} \lto A \tensor_{F_+} M$. Hence $\varphi' \circ \psi$ is an $A$-semimodule onto homomorphism and $A \tensor_{F_+} M$ turns out to be homomorphic image of the free $A$-semimodule over the same set of generators $X$ via a sort of truncation of the original morphism $f: {F_+}^{(X)} \lto M$.

Before showing an interesting application of the previous results of this section to MV-semimodules, we recall that the category $\MV$ of MV-algebras, with MV-algebra homomorphisms, is equivalent to $\ell\Gab_u$, namely, the category of lattice-ordered Abelian groups with a distinguished strong order unit whose morphisms are lattice-ordered group homomorphisms that preserve the distinguished strong unit \cite{mun}. The two functors that form such an equivalence are usually denoted by $\Gamma: \ell\Gab_u \lto \MV$ and $\Xi: \MV \lto \ell\Gab_u$; while the former is very easy to present (basically it is the construction presented in Example \ref{0u} where $u$ is the distinguished strong unit) and shall be recall hereafter, the latter requires more work and the details of its construction are not really relevant to our discussion. However, a detailed yet relatively concise presentation of Mundici categorical equivalence is presented in \cite[Chapter 2]{mvbook}. 

Let $\la G, +, -, 0, \vee, \wedge, u \ra$ be an Abelian $u\ell$-group with distinguished strong order unit $u$. Then the MV-algebra $\Gamma(G)$ is $\la [0,u], \oplus, ^*, 0\ra$ with $x \oplus y \bydef (x + y) \wedge u$ and $x^* \bydef u - x$ for all $x, y \in [0,u]$. The mapping $\Gamma: G \in \ell\Gab_u \lmapsto \Gamma(G) \in \MV$ is a full, faithful and isomorphism-dense functor.

As we observed in Example \ref{l-group} the category $\ell\Gab$ is isomorphic to the one of idempotent semifields; therefore, for any idempotent $u$-semifield $\la F, \wedge, +, -, \top, 0, u \ra$, we obtain an MV-algebra by applying the $\Gamma$ functor to the Abelian $u\ell$-group $\la F\setminus\{\top\}, +, -, 0, \vee, \wedge, u\ra$, with $\vee$ defined by means of $\wedge$ and $-$. 
In what follows, 
given an idempotent $u$-semifield $\la F, \wedge, +, -, \top, 0, u \ra$, with a slight abuse of notation we shall denote by $\Gamma(F)$ the MV-algebra $\la [0,u], \oplus, ^*, 0\ra$. 

Now, using the results we proved so far in the present section, we will show that, for any idempotent $u$-semifield $F$, the $\Gamma$ functor induces a full embedding of the category $\Gamma(F)\Mod$ into the category $F\Mod$. As a first step, let us show that the functor $\Gamma$ defines a canonical onto semiring homomorphism from any idempotent $u$-semifield to its corresponding MV-algebra. Recalling that, for an idempotent semifield $\la F, \wedge, +, -, \top, 0\ra$, the joinis defined by $x \vee y \bydef -((-x) \wedge (-y))$, we have
\begin{lemma}\label{Gamma}
Let $F$ be an idempotent $u$-semifield. Then the function
\begin{equation*}
\begin{array}{cccc}
\gamma: & F & \lto & \Gamma(F)\wr \\
			  & a 	& \lmapsto & (a \vee 0) \wedge u
\end{array}
\end{equation*}
is a semiring onto homomorphism.
\end{lemma}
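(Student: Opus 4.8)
The plan is to verify that $\gamma$ is well defined with values in $[0,u]$, that it sends the two distinguished constants of $F$ to those of $\Gamma(F)\wr$, that it respects the semiring addition $\wedge$ and the semiring multiplication, and finally that it is surjective. First I would unwind the structures involved: writing $G = F \setminus \{\top\}$ for the underlying $\ell$-group, the semiring sum of $F$ is the lattice meet (extended by $x \wedge \top = x$), the semiring product of $F$ is the group operation — written $+$ in the statement — and $\Gamma(F)\wr$ is the semiring $\langle [0,u], \wedge, \oplus, u, 0 \rangle$ with $x \oplus y = (x+y) \wedge u$. Since $0 \leq a \vee 0$ and $(a \vee 0) \wedge u \leq u$ for every $a \in F$, and since $\top \vee 0 = \top$ gives $\gamma(\top) = \top \wedge u = u$, the map $\gamma$ takes values in $[0,u]$. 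Moreover $\gamma(0) = (0 \vee 0) \wedge u = 0 \wedge u = 0$ (using $0 \leq u$), which is the multiplicative unit $\z$ of $\Gamma(F)\wr$, and $\gamma(\top) = u = \u$ is its additive unit, so both constants are preserved.

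For the additive law, I would use that the lattice of an $\ell$-group is distributive, which makes the positive-part map $x \mapsto x \vee 0$ a meet-homomorphism: $(a \wedge b) \vee 0 = (a \vee 0) \wedge (b \vee 0)$. Meeting both sides with $u$ and using idempotency $u \wedge u = u$ together with commutativity and associativity of $\wedge$ then yields $\gamma(a \wedge b) = ((a \wedge b) \vee 0) \wedge u = (a \vee 0) \wedge (b \vee 0) \wedge u = \gamma(a) \wedge \gamma(b)$; the cases in which $a$ or $b$ equals $\top$ are immediate because $\gamma(\top) \wedge \gamma(b) = u \wedge \gamma(b) = \gamma(b)$.

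The heart of the argument, and the step I expect to be the main obstacle, is the multiplicative law $\gamma(a + b) = \gamma(a) \oplus \gamma(b) = (\gamma(a) + \gamma(b)) \wedge u$. When $a$ or $b$ is $\top$ this follows from absorption, since $\gamma(\top + b) = u$ and $(u + \gamma(b)) \wedge u = u$ because $\gamma(b) \geq 0$; so one may assume $a, b \in G$. Here I would first settle the sub-case $a, b \geq 0$, where $\gamma(a) = a \wedge u$ and a short case split according to whether each of $a, b$ lies below or above $u$ — using $u \geq 0$, hence $u + u \geq u$, and that $+$ is order-preserving — gives $(a+b) \wedge u = ((a \wedge u) + (b \wedge u)) \wedge u$, which is exactly the claim. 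The delicate point is then to reduce an arbitrary pair $a, b \in G$ to this sub-case, that is, to control how truncation to $[0,u]$ interacts with adding elements of mixed sign; this is where the strong-unit hypothesis on $u$ must be exploited, and it is the part of the proof I would spend the most effort on.

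Finally, surjectivity is immediate: if $x \in [0,u] \subseteq F$ then $x \vee 0 = x$ and $x \wedge u = x$, so $\gamma(x) = x$, whence $\gamma$ restricts to the identity on $[0,u]$ and in particular maps onto $\Gamma(F)\wr$.
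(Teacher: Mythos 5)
Your verification of the two constants, of $\wedge$-preservation via distributivity of the lattice reduct, and of surjectivity all match the paper's argument, and your treatment of the product law on the positive cone (both arguments $\geq 0$, or one equal to $\top$) is correct. The step you deferred, however, is not merely delicate: it cannot be carried out, because the identity $\gamma(a+b)=\gamma(a)\oplus\gamma(b)$ genuinely fails for arguments of mixed sign. Take $a=u$ and $b=-u$ in any idempotent $u$-semifield $F$: then $\gamma(u)=u$ and $\gamma(-u)=(-u\vee 0)\wedge u=0\wedge u=0$, so $\gamma(u)\oplus\gamma(-u)=u\oplus 0=(u+0)\wedge u=u$, whereas $\gamma\bigl(u+(-u)\bigr)=\gamma(0)=0\neq u$. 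The strong-unit hypothesis cannot rescue this ($u$ is already a strong unit in the example), so the gap you flagged as ``the part I would spend the most effort on'' is a genuine and, as the statement stands, unclosable one.

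It may help to see where the paper's own proof passes over this point: it writes $((a+b)\vee 0)\wedge u=((a\vee 0)+(b\vee 0))\wedge u\wedge u$ and attributes the first equality to the compatibility of $+$ with the lattice structure. But distributivity of $+$ over $\vee$ gives $(a\vee 0)+(b\vee 0)=(a+b)\vee a\vee b\vee 0$, which strictly dominates $(a+b)\vee 0$ whenever $a\vee b\not\leq(a+b)\vee 0$ --- exactly the mixed-sign situation, and exactly where your sub-case analysis stops. In other words, your instinct to isolate that reduction as the crux was right: $\gamma$ preserves the semiring sum $\wedge$, the two constants, and the product restricted to the positive cone (your argument, or equivalently $(a\wedge u)+(b\wedge u)=(a+b)\wedge(a+u)\wedge(u+b)\wedge(u+u)\geq(a+b)\wedge u$ for $a,b\geq 0$), but it is not multiplicative on all of $F$. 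Any completion of your proof would therefore have to either restrict the domain of $\gamma$ or modify the statement; as written, the missing step is false rather than merely unproved.
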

\begin{proof}
It is immediate to verify that $\gamma(0) = 0$ and $\gamma(\top) = u$. Since $F$ is obtained from a lattice-ordered Abelian group with the addition of the top element, its lattice reduct is distributive, hence $\gamma$ preserves the $\wedge$ operation: for all $a,b \in F$, $\gamma(a \wedge b) = ((a \wedge b) \vee 0) \wedge u = (a \vee 0) \wedge (b \vee 0) \wedge u \wedge u = ((a \vee 0) \wedge u) \wedge ((b \vee 0) \wedge u) = \gamma(a) \wedge \gamma(b)$.

On the other hand, according to \cite{mun}, the sum in $\Gamma(F)$ is defined by $a \oplus b = (a + b) \wedge u$ for all $a, b \in [0,u]$. Therefore the compatibility of $+$ with the lattice structure of $F$ yields, for all $a,b \in F$, $\gamma(a + b) = ((a + b) \vee 0) \wedge u = ((a \vee 0) + (b \vee 0)) \wedge u \wedge u = (((a \vee 0) + (b \vee 0)) \wedge u) \wedge u = (((a \vee 0) \wedge u) + ((b \vee 0) \wedge u)) \wedge u = (\gamma(a) + \gamma(b)) \wedge u = \gamma(a) \oplus \gamma(b)$.

The fact that the map $\gamma$ is surjective is obvious since $a \in \gamma^{-1}(a)$ for all $a \in [0,u]$.
\end{proof}

By Lemmas~\ref{indmod} and \ref{Gamma} and Theorem~\ref{adjfunct} the homomorphism $\gamma$ defines an adjoint and coadjoint functor
\begin{equation}\label{G}
G: \Gamma(F)\Mod \lto F\Mod
\end{equation}
for any idempotent $u$-semifield $F$. Combining Theorem \ref{emb} with Lemma \ref{Gamma} we obtain the following immediate result.
\begin{corollary}\label{mvemb}
The functor $G$ defined in (\ref{G}) is a full embedding and its left adjoint $G_l$ is its left inverse.
\end{corollary}

It is interesting to notice that the functor $G_l$ somehow ``truncates'' $F$-semimodules to $\Gamma(F)$-semimodules similarly to how $\Gamma$ truncates idempotent $u$-semifields to MV-algebras. We explain this statement starting from free semimodules.

Let $F$ be an idempotent $u$-semifield, $A = \Gamma(F)$ and $F^{(X)}$ be the free $F$-semimodule over a given set $X$. Moreover, let us denote by $\chi_x$ and $\chi_x'$ the maps defined, respectively, in (\ref{chimv}) and (\ref{chisf}).

Let us consider the function
$$f: \ (a, \alpha) \in A \times F^{(X)} \ \lmapsto \ a \oplus \bigwedge_{x \in \supp\alpha} \gamma(\alpha(x)) \oplus \chi_x' \in A^{(X)},$$
and let $\alpha, \alpha' \in F^{(X)}$ and $a,a'  \in A$. We have:
$$\begin{array}{l}
f(a \wedge a', \alpha) \\
= (a \wedge a') \oplus \left(\bigwedge\limits_{x \in \supp\alpha} \gamma(\alpha(x)) \oplus \chi_x'\right) \\
= \left(a \oplus \bigwedge\limits_{x \in \supp\alpha} \gamma(\alpha(x)) \oplus \chi_x'\right) \wedge \left(a' \oplus \bigwedge\limits_{x \in \supp\alpha} \gamma(\alpha(x)) \oplus \chi_x'\right) \\
= f(a, \alpha) \wedge f(a', \alpha),
\end{array}$$
similarly $f(a, \alpha \wedge \alpha') = f(a,\alpha) \wedge f(a,\alpha')$. Now let $b \in F$; if $b \neq \top$ then $\supp \alpha = \supp(b + \alpha)$ and we have
$$\begin{array}{l}
f(a, b + \alpha) \\
= a \oplus \bigwedge\limits_{x \in \supp\alpha)} \gamma(b+\alpha(x)) \oplus \chi_x' \\
= a \oplus \bigwedge\limits_{x \in \supp\alpha)} \gamma(b) \oplus \gamma(\alpha(x)) \oplus \chi_x' \\
= a \oplus \gamma(b) \oplus \bigwedge\limits_{x \in \supp\alpha} \gamma(\alpha(x)) \oplus \chi_x' \\
= f(a \oplus \gamma(b), \alpha).
\end{array}$$
If $b = \top$ then $f(a, \top + \alpha) = a \oplus \u = \u = f(a \oplus \u, \alpha) = f(a \oplus \g(\top), \alpha)$.

So $f$ is an $F$-bimorphism (see Definition \ref{bimor}), hence it defines a semilattice homomorphism $\phi: A \tensor_F F^{(X)} \lto A^{(X)}$ which is actually an $A$-semimodule homomorphism for the commutativity of $A$. Let us now consider the map
$$\begin{array}{llll}
\psi: & A^{(X)} & \lto 		 & A \tensor_F F^{(X)} \\
			& \alpha	& \lmapsto & 0 \tensor \bigwedge\limits_{x \in \supp\alpha} \alpha(x) + \chi_x
\end{array}.$$
It is self-evident that $\phi \circ \psi = \id_{A^{(X)}}$; on the other hand, for any tensor $a \tensor \alpha \in A \tensor_F F^{(X)}$,
$$\begin{array}{l}
a \tensor \alpha = a \tensor \left(\bigwedge\limits_{x \in \supp\alpha} \alpha(x) + \chi_x\right) \\
= \bigwedge\limits_{x \in \supp\alpha} (a \tensor  (\alpha(x) + \chi_x)) \\
= \bigwedge\limits_{x \in \supp\alpha} ((a \cdot_\g \alpha(x)) \tensor \chi_x) \\
= \bigwedge\limits_{x \in \supp\alpha} ((a \oplus \g(\alpha(x))) \tensor \chi_x)
\end{array}$$
and
$$\begin{array}{l}
(\psi \circ \phi)(a \tensor \alpha) = (\psi \circ \phi)\left(\bigwedge\limits_{x \in \supp\alpha}(a \oplus \gamma(\alpha(x))) \tensor \chi_x\right) \\
= \bigwedge\limits_{x \in \supp\alpha}\left(a \oplus \gamma(\alpha(x)) \oplus (\psi \circ \phi)(0 \tensor \chi_x)\right) \\
= \bigwedge\limits_{x \in \supp\alpha} \left(a \oplus \gamma(\alpha(x)) \oplus \psi(0 \tensor \chi_x')\right) \\
= \bigwedge\limits_{x \in \supp\alpha}(a \oplus \gamma(\alpha(x))) \oplus \left(0 \tensor \chi_x\right) \\
= \bigwedge\limits_{x \in \supp\alpha} (a \oplus \gamma(\alpha(x))) \tensor \chi_x,
\end{array}$$
whence $\psi \circ \phi = \id_{A \tensor_F F^{(X)}}$. It follows that $A^{(X)}$ and $A \tensor_F F^{(X)}$ are isomorphic.

In the general case, if $M$ is an $F$-semimodule and $X$ is a set of generators for it, then $M$ is homomorphic image of $F^{(X)}$, that is there exists an onto homomorphism $f: F^{(X)} \lto M$. So, as in the previous case, we can define the map $f': (a, \alpha) \in A \times F^{(X)} \lmapsto a \tensor f(\alpha) \in A \tensor_F M$ which is easily seen to be an onto $F$-bimorphism and, therefore, induces an onto $A$-semimodule homomorphism $\varphi': A \tensor_F F^{(X)} \lto A \tensor_F M$. Hence $\varphi' \circ \psi$ is an $A$-semimodule onto homomorphism and $A \tensor_F M$ turns out to be homomorphic image of the free $A$-semimodule over the same set of generators $X$ via a sort of truncation of the original morphism $f: F^{(X)} \lto M$.

\section{Concluding remarks}
\label{concl}

The results presented in this work broaden the already wide variety of connections between MV-algebras and other theories. As we anticipated, our intention was mainly to establish such new links so as to open new research lines and motivations for future works on this matter. Indeed, such a semiring-theoretic perspective on MV-algebras naturally suggests many questions and ideas.

For example, one may ask if it is possible to define tropical algebraic varieties on MV-algebras as a ``truncated'' version of the ones defined on the tropical semifield of the reals. Moreover, if such a question has a positive answer, it would be reasonable to ask whether there would be any connection between such a theory and the well-established geometric theory of MV-algebras developed mainly by Aguzzoli, Mundici and Panti (see, for instance, \cite{agu,mun2,mun3,pan}). 

Another issue that naturally arises is related to the functor $K_0$ associating an Abelian group to every MV-algebra. Obviously, such a functor immediately suggests the development of an algebraic $K$-theory of MV-algebras which, however, needs to be strongly motivated, i.~e. is expected to advance the knowledge of MV-algebras rather than being a purely speculative exercise.

Besides all these possible advances, it is unquestionable that the strong tie between semiring and semimodule theories of MV-algebras and idempotent $u$-semifields --- whose common DNA lies on Mundici categorical equivalence --- is worth to be investigated as deeply as possible. As a matter of fact, such an equivalence directly relates the tropical semifield $\la \ov\R, \min, +, \infty, 0 \ra$, which is the basis for the most important concrete model of tropical geometry, with the MV-algebra $[0,1]$, that generates the variety of MV-algebras and with respect to which \L ukasiewicz propositional calculus is standard complete.

\end{document}